\newtheorem{theorem}{Theorem}[section]
\newtheorem{proposition}[theorem]{Proposition}
\newtheorem{lemma}[theorem]{Lemma}
\newtheorem{definition}[theorem]{Definition}
\newtheorem{remark}[theorem]{Remark}
\def\be#1 {\begin{equation} \label{#1}}
\newcommand{\ee}{\end{equation}}
\def\sqw{\hbox{\rlap{\leavevmode\raise.3ex\hbox{$\sqcap$}}$%
\sqcup$}}
\def\findem{\ifmmode\sqw\else{\ifhmode\unskip\fi\nobreak\hfil
\penalty50\hskip1em\null\nobreak\hfil\sqw
\parfillskip=0pt\finalhyphendemerits=0\endgraf}\fi}
\newcommand{\mb}{\medskip\noindent}
\newcommand{\gb}{\bigskip\noindent}
\newcommand{\R}{{\mathbb {R}}}
\newcommand{\N}{{\mathbb N}}
\newcommand{\Z}{{\mathbb Z}}
\newcommand{\s}{\mathbf S}
\newcommand{\U}{{\bf U}}
\newcommand{\OQ}{\mathbf Q}
\newcommand{\OP}{\mathbf P}
\begin{document}

\title
[Smooth bilinear square functions]
{\bf  Boundedness of smooth bilinear square functions and applications to some bilinear pseudo-differential operators}

\author{Fr\'ed\'eric Bernicot}
\address{Fr\'ed\'eric Bernicot
\\
Laboratoire Paul Painlev\'e \\ CNRS - Universit\'e Lille 1 \\
F-59655 Villeneuve d'Ascq, France}
\email{frederic.bernicot@math.univ-lille1.fr}

\author{Saurabh Shrivastava}
\address{
Saurabh Shrivastava \\
Dept. of Mathematics \\
Indian Institute of Technology - Kanpur\\
Kanpur-208016, India}

\email{saurabhk@iitk.ac.in}


\date{\today}

\subjclass[2000]{Primary  47G30. Secondary  42B15, 42C10, 35S99.}

\keywords{Bilinear square functions, bilinear multipliers, pseudo-differential operators}

\begin{abstract} This paper is devoted to the proof of boundedness of bilinear smooth square functions. Moreover, we deduce boundedness
of some bilinear pseudo-differential operators associated with symbols belonging to a subclass of $BS^0_{0,0}$.
\end{abstract}

\maketitle

\tableofcontents

\section{Introduction}

Let us begin with some important results from the linear theory of singular multiplier operators on $L^p(\R)$.
The prototype of a singular multiplier operator is the Hilbert transform, which is defined as:
$$Hf(x):=\frac{1}{\pi}\;p.v.\;\int_{\R} f(x-y)\frac{dy}{y},\; f\in \mathcal{S}(\R).$$
Or equivalently
$$\widehat{Hf}(\xi):= -i \it{sgn}(\xi) \hat{f}(\xi),\; f\in \mathcal{S}(\R),$$
where $\mathcal{S}(\R)$ denotes the Schwartz space of functions on $\R$.

\mb It is well known that $H$ maps $L^p(\R)$ into $L^p(\R)$ for $1<p<\infty$ and $L^1(\R)$ into $L^{1,\infty}(\R).$ The study of such singular integral operators comes under the ``Calder\'{o}n-Zygmund theory".

\mb Let $\omega$ be an interval in $\R.$ Denote by ${\bf1}_\omega$ the characteristic function of $\omega$ and consider the following operator
$$\pi_\omega f(x):= \int_\R {\bf 1}_\omega(\xi)\hat{f}(\xi)e^{2\pi i x\xi}d\xi.$$
\mb Remark that the operator $\pi_\omega$ truncates the frequency to the interval $\omega$ and if $\omega=[a,b], a<b,$ it has the following relation with the Hilbert transform.
$$\pi_\omega=\frac{i}{2} (M_aHM_{-a}-M_bHM_{-b}),$$
where $M$ is the modulation operator given by $M_af(x):= e^{2\pi i ax}f(x).$
\mb Hence, using the boundedness of the Hilbert transform one can easily deduce that the operator $\pi_\omega$ possesses the same
boundedness properties as the operator $H$. Moreover, the operator norm does not depend on the interval $\omega$.

\mb Observe that for $p=2,$ using Plancherel theorem we can write
$$\|f\|_{L^2(\R)}=\left\| (\sum\limits_{n\in\Z}|\pi_{\omega_n}(f)|^2)^{\frac{1}{2}} \right\|_{L^2(\R)}, $$
where $\omega_n,\; n\in \Z,$ are disjoint intervals in $\R$ such that their union is whole of $\R$.
The above equation and the uniform bound for the operators $\pi_{\omega_n}$ on $L^p(\R)$ motivate the study of boundedness properties for the square function $f \rightarrow (\sum\limits_{n\in\Z}|\pi_{\omega_n}(f)|^2)^{\frac{1}{2}}$ when $p\neq 2.$
At this point we would like to remark that the Hilbert transform has one preferred point of singularity, whereas these square functions have infinitely many points of singularity. Therefore the study of these operators is more delicate. The first result in this direction is due to Littlewood and Paley (\cite{LiP, LiP1, LiP2}). They proved that

\begin{theorem}[\cite{LiP}-\cite{LiP2}] Let $\omega_n=(-2^{n+1},-2^n]\cup[2^n,2^{n+1}),\; n\in \Z$. For $1<p<\infty$, there exist
constants $c_p$ and $C_p$ such that for all $f\in \mathcal{S}(\R)$, we have
\begin{eqnarray}\label{firstlittlewood}
c_p \|f\|_{L^p(\R)} \leq \left\|(\sum\limits_{n\in\Z}|\pi_{\omega_n}(f)|^2)^{\frac{1}{2}} \right\|_{L^p(\R)} \leq C_p \|f\|_{L^p(\R)}.
\end{eqnarray}
\end{theorem}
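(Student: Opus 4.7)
The plan is to establish the upper (right-hand) inequality first by randomization, and then to derive the lower (left-hand) inequality from it by duality.

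For the upper bound, I would linearize the square function via Khintchine's inequality. Let $(\epsilon_n)_{n\in\Z}$ be a sequence of independent Bernoulli variables taking values in $\{-1,+1\}$. Khintchine's inequality gives
\begin{equation*}
\left\|\Bigl(\sum_{n\in\Z}|\pi_{\omega_n}f|^2\Bigr)^{1/2}\right\|_{L^p(\R)} \simeq_p \left(\mathbb{E}_\epsilon\left\|\sum_{n\in\Z}\epsilon_n\,\pi_{\omega_n}f\right\|_{L^p(\R)}^p\right)^{1/p},
\end{equation*}
so it suffices to bound the randomized operator $T_\epsilon=\sum_n\epsilon_n\pi_{\omega_n}$ on $L^p(\R)$, uniformly in the choice of signs. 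Now $T_\epsilon$ is the Fourier multiplier associated with the symbol $m_\epsilon=\sum_n\epsilon_n{\bf 1}_{\omega_n}$, which is bounded by $1$ and constant on each dyadic annulus $\omega_n$. Since all discontinuities of $m_\epsilon$ sit exactly at the dyadic endpoints $\pm 2^n$, its variation on every dyadic interval vanishes, so the Marcinkiewicz multiplier theorem applies and yields $\|T_\epsilon\|_{L^p\to L^p}\leq C_p$ with $C_p$ independent of $\epsilon$. This produces the right-hand inequality in (\ref{firstlittlewood}).

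For the lower bound, I would appeal to duality. The intervals $\omega_n$ form a disjoint partition of $\R\setminus\{0\}$, so $\sum_n\pi_{\omega_n}=I$ on $L^2(\R)$, and each $\pi_{\omega_n}$ is a self-adjoint orthogonal projection (its symbol being a real indicator). Consequently, for Schwartz functions $f,g$,
\begin{equation*}
\langle f,g\rangle=\sum_{n\in\Z}\langle\pi_{\omega_n}f,\pi_{\omega_n}g\rangle.
\end{equation*}
Applying Cauchy--Schwarz in the summation index $n$ and then H\"older's inequality in the space variable gives
\begin{equation*}
|\langle f,g\rangle|\leq\Bigl\|\bigl(\sum_n|\pi_{\omega_n}f|^2\bigr)^{1/2}\Bigr\|_{L^p(\R)}\Bigl\|\bigl(\sum_n|\pi_{\omega_n}g|^2\bigr)^{1/2}\Bigr\|_{L^{p'}(\R)}.
\end{equation*}
Invoking the already-established upper bound at the dual exponent $p'$ and taking the supremum over $g$ with $\|g\|_{L^{p'}}=1$ delivers the left-hand inequality in (\ref{firstlittlewood}).

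The only real obstacle is the uniform multiplier estimate for $T_\epsilon$. The symbols $m_\epsilon$ are merely piecewise constant, so the smooth Mihlin theorem does not apply directly; what saves us is that the jumps of $m_\epsilon$ sit precisely at the points $\pm 2^n$, which is exactly the scale on which the Marcinkiewicz dyadic-variation condition becomes trivial. In a more general setting (for instance an arbitrary partition into intervals of comparable length, as arises in Rubio de Francia-type theorems) this step becomes substantially harder and demands the full vector-valued Calder\'on--Zygmund machinery.
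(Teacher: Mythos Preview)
Your duality argument for the lower bound matches the paper's sketch exactly, so that half is fine. The difficulty is in the upper bound, where you invoke the Marcinkiewicz multiplier theorem to control $T_\epsilon$. This is circular: the standard proof of the Marcinkiewicz multiplier theorem on $\R$ (as in Stein's or Grafakos' textbooks) \emph{begins} by decomposing $f$ via the dyadic Littlewood--Paley square function and then uses precisely the inequality~(\ref{firstlittlewood}) you are trying to establish. So as written your argument assumes its own conclusion. You are correct that $m_\epsilon$ has zero variation on each dyadic block and hence satisfies the Marcinkiewicz hypothesis trivially, but that observation is worthless unless you have an independent proof of Marcinkiewicz that avoids Littlewood--Paley, and there is no standard such proof.

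The paper's sketch (and the classical route) breaks the circularity by going through a \emph{smooth} square function first. One fixes $\psi\in\mathcal S(\R)$ with $\widehat\psi$ supported in $\tfrac12\le|\xi|\le 4$ and equal to $1$ on $1\le|\xi|\le 2$, sets $\widehat{\psi_n}(\xi)=\widehat\psi(2^{-n}\xi)$, and shows $\|(\sum_n|\psi_n*f|^2)^{1/2}\|_{L^p}\lesssim\|f\|_{L^p}$ directly from vector-valued Calder\'on--Zygmund theory: the $\ell^2$-valued kernel $(\psi_n)_n$ satisfies the H\"ormander condition, so the $L^2$ bound (trivial by Plancherel) extends to all $1<p<\infty$. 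One then passes from the smooth pieces to the sharp projections $\pi_{\omega_n}$ by writing $\pi_{\omega_n}$ as a combination of modulated Hilbert transforms acting on the smooth piece and invoking the vector-valued boundedness of $H$; this is the ``vector valued arguments'' alluded to in the paper. Your Khintchine linearization is a perfectly legitimate alternative \emph{starting move}, but after randomizing you should bound $T_\epsilon$ by showing that its kernel (or the smooth replacement of it) satisfies Calder\'on--Zygmund estimates uniformly in $\epsilon$, not by quoting Marcinkiewicz.
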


\mb In literature such square functions are referred as non-smooth Littewood-Paley square functions. A little about the proof of inequality ~(\ref{firstlittlewood}): First one proves the boundedness of a suitable smooth Littlewood-Paley square function. Then vector valued arguments permit to get the right side inequality in ~(\ref{firstlittlewood}) and the left side inequality is deduced using duality.

Later, in the year 1967, Carleson~\cite{Carleson} considered the non-smooth Littlewood-Paley square function associated with the sequence
$\omega_n= [n,n+1],~n\in\Z,$ and proved the following.

\begin{theorem}[\cite{Carleson}] For $2\leq p<\infty,$ there exists a constant $C_p$ such that for all $f\in \mathcal{S}(\R)$, we have
\begin{eqnarray}\label{carleson}
\left\|(\sum\limits_{n\in\Z}|\pi_{[n,n+1]}(f)|^2)^{\frac{1}{2}}\right\|_{L^p(\R)} \leq C_p \|f\|_{L^p(\R)}.
\end{eqnarray}
\end{theorem}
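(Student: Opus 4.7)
The plan is to reduce Carleson's sharp square function to a smooth Littlewood-Paley square function on a uniform partition of the frequency axis, which can be handled by vector-valued Calder\'on-Zygmund theory, and then recover the sharp version using the fact that each $\pi_{[n,n+1]}$ is only a modulated copy of the single operator $\pi_{[0,1]}$. At $p=2$ the bound is an equality by Plancherel (the Fourier supports are pairwise disjoint, hence the $\pi_{[n,n+1]}f$ are mutually orthogonal), so it is enough to prove the estimate at some fixed $p>2$ and then interpolate. I would fix $\psi\in\mathcal{S}(\R)$ with $\widehat{\psi}\in C_c^\infty$, $\widehat{\psi}\equiv 1$ on $[0,1]$ and $\operatorname{supp}\widehat{\psi}\subset(-\tfrac14,\tfrac54)$, and introduce the smooth frequency localizers $\widetilde{\pi}_n f:=f*M_n\psi$, where $M_ag(x):=e^{2\pi i ax}g(x)$. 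Because $\widehat{\psi}(\cdot-n)\equiv 1$ on $[n,n+1]$, one has the crucial identity $\pi_{[n,n+1]}f=\pi_{[n,n+1]}\widetilde{\pi}_n f$.

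The proof then splits into two independent $L^p$ estimates. First, the smooth square function $\widetilde{S}f(x):=\bigl(\sum_n|\widetilde{\pi}_nf(x)|^2\bigr)^{1/2}$ is bounded on $L^p$ for all $1<p<\infty$: one views $f\mapsto(\widetilde{\pi}_nf)_n$ as an $\ell^2$-valued convolution with kernel $K(x)=(M_n\psi(x))_n$, and verifies that $K$ satisfies the standard Calder\'on-Zygmund size and regularity conditions as a Hilbert-valued kernel (the translates $\widehat{\psi}(\cdot-n)$ form a bounded-overlap covering of $\R$, which on the spatial side produces enough cancellation in the $\ell^2$ norm to control $\|\partial^\alpha K(x)\|_{\ell^2}$). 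Second, I would establish the vector-valued inequality
$$\Big\|\bigl(\textstyle\sum_n|\pi_{[n,n+1]}g_n|^2\bigr)^{1/2}\Big\|_{L^p}\;\lesssim\;\Big\|\bigl(\textstyle\sum_n|g_n|^2\bigr)^{1/2}\Big\|_{L^p}.$$
The factorization $\pi_{[n,n+1]}=M_n\pi_{[0,1]}M_{-n}$, together with the fact that modulations preserve the $L^p(\ell^2)$ norm pointwise in $x$, reduces this (after setting $h_n:=M_{-n}g_n$) to the diagonal boundedness of the single operator $\pi_{[0,1]}$ on $L^p(\ell^2)$; the latter is automatic from Khintchine's inequality applied to the scalar bound for $\pi_{[0,1]}$, which itself is a linear combination of modulated Hilbert transforms as recalled earlier in the introduction.

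Combining the two estimates by plugging $g_n=\widetilde{\pi}_nf$ into the vector-valued bound and then invoking the smooth square function estimate gives $\|Sf\|_{L^p}\lesssim\|\widetilde{S}f\|_{L^p}\lesssim\|f\|_{L^p}$, completing the proof. The main obstacle is the verification of the $\ell^2$-valued Calder\'on-Zygmund conditions for the kernel $K$: each individual modulated Schwartz function $M_n\psi$ carries no decay in $n$, and one must exploit the interference between neighbouring frequencies to get the required decay of $\|K(x)\|_{\ell^2}$ and $\|K'(x)\|_{\ell^2}$ away from the origin. The restriction $p\geq 2$ in the statement is natural from this route: for $p<2$ the missing lower bound would not follow by the same duality-free argument and genuinely requires Rubio de Francia-type techniques that are beyond the scope of this simpler sketch.
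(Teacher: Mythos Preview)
The paper does not prove this statement; it is quoted as a classical result of Carleson with a citation, so there is no proof in the paper to compare against. Your overall two-step strategy --- reduce the sharp square function to the smooth one via the $L^p(\ell^2)$ boundedness of the single operator $\pi_{[0,1]}$, then bound the smooth square function $\widetilde S$ --- is the standard route, and the second step (the vector-valued Hilbert-transform argument) is correct.

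The gap is in your treatment of $\widetilde S$. You assert that it is bounded on $L^p$ for all $1<p<\infty$ because the $\ell^2$-valued kernel $K(x)=(M_n\psi(x))_{n\in\Z}$ is Calder\'on--Zygmund, and you flag this verification as the main obstacle. The obstacle is in fact insurmountable: for every $x$ with $\psi(x)\neq 0$,
\[
\|K(x)\|_{\ell^2}^2=\sum_{n\in\Z}|e^{2\pi i nx}\psi(x)|^2=|\psi(x)|^2\sum_{n\in\Z}1=+\infty,
\]
so $K$ is not even locally integrable as an $\ell^2$-valued function, and there is no ``interference between neighbouring frequencies'' to exploit since the phases disappear under the modulus. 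Consequently $\widetilde S$ is genuinely unbounded on $L^p$ for $p<2$; the restriction $p\ge 2$ is not merely ``natural'' but necessary already at the smooth level, and your remark that the issue for $p<2$ concerns a ``missing lower bound'' is off target. The correct proof of the smooth bound is the pointwise estimate $\widetilde S f(x)^2\lesssim M(|f|^2)(x)$, obtained for instance by writing $\sum_n|\widetilde\pi_nf(x)|^2=\int_0^1\bigl|\sum_{k\in\Z} f(x-t-k)\psi(t+k)\bigr|^2\,dt$ via Poisson summation and then applying Cauchy--Schwarz in $k$; the $L^{p/2}$ boundedness of the Hardy--Littlewood maximal function then yields $\|\widetilde Sf\|_p\lesssim\|f\|_p$ exactly for $p\ge 2$. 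With this replacement the rest of your argument goes through.
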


\mb The smooth analogue of Carleson's Littlewood-Paley square function can be defined by taking a smooth function $\phi$ with $\it{supp}{\phi}\subseteq [0,1]$ and $\phi_n(\xi)=\phi(\xi-n), \; n\in\Z,$ as $f\rightarrow(\sum\limits_{n\in\Z}|(e^{2\pi in.}\hat{\phi})\ast f|^2)^{\frac{1}{2}}.$
This smooth operator satisfies the similar $L^p$ estimates, i.e., for $2\leq p<\infty$, there exists a constant $C_p$ such that for all $f\in \mathcal{S}(\R)$, we have
\begin{eqnarray}\label{carlesonsmth}
\left\|(\sum\limits_{n\in\Z}|(e^{2\pi in.}\hat{\phi})\ast f|^2)^{\frac{1}{2}}\right\|_{L^p(\R)}\leq C_p \|f\|_{L^p(\R)}.
\end{eqnarray}
Note that in both these Littlewood-Paley inequalities (inequality ~(\ref{firstlittlewood}) and (\ref{carleson})), the sequences of
intervals have specific properties. In the first case intervals are dilates of each other by a power of $2$ whereas in the second one they are integer translates of each other. The question for other sequences of intervals remained open for quite some time. Finally, in
the year 1985 Rubio de Francia~\cite{RF} provided a positive answer towards it in its greatest generality. His result is:

\begin{theorem}[\cite{RF}] \label{rubio} Let $\omega_n$ be an arbitrary sequence of disjoint intervals in $\R$. Then for $2\leq p<\infty$, there exists a constant $C_p$ such that for all $f\in \mathcal{S}(\R)$, we have
\begin{eqnarray}\label{rubio1}
\left\| (\sum\limits_{n\in\Z}|\pi_{\omega_n}(f)|^2)^{\frac{1}{2}} \right\|_{L^p(\R)} \leq C_p \|f\|_{L^p(\R)}
\end{eqnarray}
\end{theorem}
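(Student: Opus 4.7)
The plan is to prove Theorem~\ref{rubio} in three stages: a smooth reduction, a weighted $L^2$ estimate for the smoothed square function, and a duality step giving $L^p$ for $p>2$; one then passes back to the rough projections $\pi_{\omega_n}$ using the identity $\pi_\omega=\frac{i}{2}(M_aHM_{-a}-M_bHM_{-b})$ already recalled in the introduction.

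For each $\omega_n=(a_n,b_n)$ I would fix a smooth bump $\phi_n$ supported on a slight enlargement $\omega_n^*$ of $\omega_n$, equal to $1$ on $\omega_n$, with derivatives satisfying $|\phi_n^{(k)}|\lesssim |\omega_n|^{-k}$; write $S_nf:=(\phi_n\hat{f})^\vee$ and $Gf:=(\sum_n|S_nf|^2)^{1/2}$. The case $p=2$ is then immediate from Plancherel and the bounded overlap of the supports of the $\phi_n$. For $p>2$, setting $r=(p/2)'\in(1,\infty)$, duality reduces $\|Gf\|_p\lesssim \|f\|_p$ to
$$\int \sum_n |S_nf|^2\,g\,dx\leq C\|f\|_p^2\|g\|_r\qquad(g\geq 0),$$
which I would derive from a weighted estimate of the form
$$\int \sum_n |S_nf|^2\,w\,dx\leq C_s\int |f|^2\,\mathcal{M}_sw\,dx,\qquad s>1,$$
where $\mathcal{M}_sw:=(\mathcal{M}(w^s))^{1/s}$ and $\mathcal{M}$ is the Hardy--Littlewood maximal function. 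Choosing $s<r$ and invoking the $L^r$-boundedness of $\mathcal{M}_s$ then closes the argument.

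The crucial step is therefore the weighted estimate above. Here I would exploit that the spectrum of $|S_nf|^2$ lies in $\omega_n^*-\omega_n^*\subset B(0,2|\omega_n|)$, so that pairing against $w$ only tests $w$ at scales coarser than $|\omega_n|^{-1}$; combined with an atomic decomposition of $f$ adapted to these scales and Plancherel orthogonality across the essentially disjoint $\omega_n^*$, the maximal function of $w$ emerges naturally. To pass from $S_n$ back to $\pi_{\omega_n}$, I would write $\pi_{\omega_n}=S_n-(S_n-\pi_{\omega_n})$, express the difference as two modulated Hilbert transforms composed with smooth cutoffs located near $a_n$ and $b_n$, and apply the vector-valued Fefferman--Stein extension of $H$ to handle the resulting square function of these error pieces.

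The main obstacle is the weighted inequality itself: the collection $\{\omega_n\}$ carries no scale or location structure, so classical Littlewood--Paley techniques do not apply and one must control the multi-scale interaction of an arbitrary disjoint family by hand. Obtaining constants uniform in the sequence of intervals --- which is the whole content of the theorem --- is the technical heart of the argument.
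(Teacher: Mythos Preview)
The paper does not prove Theorem~\ref{rubio}; it is quoted from~\cite{RF}, accompanied only by the remark that the proof proceeds by ``reduction to the case of well-distributed collection and then to invoke some vector-valued Calder\'on--Zygmund theory.'' So there is no detailed argument in the paper to compare against beyond that one sentence.

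Your overall architecture---smooth the projections, prove a weighted $L^2$ bound, dualize to reach $L^p$ for $p>2$, then return to the sharp $\pi_{\omega_n}$ via the $\ell^2$-valued Hilbert transform---is a correct route and is in fact the shape of Rubio de Francia's own argument. The one place your sketch is genuinely thin is exactly the step you yourself flag as the heart of the matter, the weighted inequality $\int(Gf)^2w\lesssim\int|f|^2\mathcal M_sw$. The spectral observation that $|S_nf|^2$ has Fourier support in $B(0,C|\omega_n|)$ is correct, but used alone it only yields $\int|S_nf|^2\,w\lesssim\int|S_nf|^2\,\mathcal Mw$ for each fixed $n$; summing in $n$ returns you to the same weighted square-function bound you set out to prove, now with $\mathcal Mw$ in place of $w$. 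Your phrase ``atomic decomposition of $f$ adapted to these scales'' does not name a workable mechanism, because an arbitrary disjoint family $\{\omega_n\}$ carries no common scale to which a decomposition of $f$ could be adapted. This is precisely why the reduction to a well-distributed family---the step the paper singles out---is the standard opening move: once $\sum_n{\bf 1}_{2\omega_n}\lesssim 1$, the smooth square function becomes an honest $\ell^2$-valued Calder\'on--Zygmund operator, and the weighted (hence $L^p$) bound drops out of the standard theory. Without that reduction, or an equivalent device, the weighted estimate for an arbitrary family is not a consequence of the ingredients you list.

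Your passage from smooth back to rough is fine in spirit; the cleaner formulation is $\pi_{\omega_n}=\pi_{\omega_n}\circ\tilde S_n$ with $\tilde\phi_n\equiv1$ on $\omega_n$, after which the identity $\pi_{\omega_n}=\tfrac{i}{2}(M_{a_n}HM_{-a_n}-M_{b_n}HM_{-b_n})$ and the $\ell^2$-valued boundedness of $H$ reduce the rough square function directly to the smooth one, without analysing $S_n-\pi_{\omega_n}$ separately.
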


\mb The proof of Theorem~(\ref{rubio})is quite intricate. The first step towards it is the reduction to the case of well-distributed collection and then to invoke some vector-valued Calder\'on-Zygmund theory.


\gb We now describe some of the existing results concerning bilinear square functions.

The study of bilinear multiplier operators (or multilinear operators) attracted a great deal of attention after the breakthrough of Lacey and Thiele~\cite{LT1,LT4} on Calder\'{o}n's conjecture about the boundedness of the bilinear Hilbert transform. The bilinear Hilbert transform is defined for $f,g\in \mathcal{S}(\R) $ as:
$$H(f,g)(x):=p.v.\;\frac{1}{\pi}\int_{\R} f(x+y)g(x-y)\frac{dy}{y}.$$
Or equivalently
$$H(f,g)(x):= i \int_{\R}\int_{\R}\hat{f}(\xi)\hat{g}(\eta)\it{sgn}(\xi-\eta)e^{2 \pi ix(\xi+\eta)}d\xi d\eta.$$
Lacey and Thiele proved the following.

\begin{theorem}[\cite{LT1,LT4}] \label{laceythiele} For the exponents $p_1, p_2, p_3$ satisfying $1<p_1,p_2\leq \infty$ and
$\frac{1}{p_1}+\frac{1}{p_2}=\frac{1}{p_3}<\frac{3}{2}$, there is a constant $C>0$ such that
\begin{eqnarray}\label{bilinearht}
\|H(f,g)\|_{L^{p_3}(\R)} &\leq& C \|f\|_{L^{p_1}(\R)} \|g\|_{L^{p_2}(\R)}.
\end{eqnarray}
\end{theorem}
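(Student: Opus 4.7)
The plan is to prove Theorem~\ref{laceythiele} via time-frequency analysis, following the strategy that Lacey and Thiele developed around Carleson's theorem on pointwise convergence of Fourier series. The essential obstruction, which rules out standard Calder\'on--Zygmund theory, is that $H(f,g)$ commutes with \emph{modulation} symmetries: conjugating $f$ by $e^{2\pi i \xi_0 x}$ and $g$ by $e^{-2\pi i \xi_0 x}$ leaves $H$ invariant. Consequently the singular symbol $\mathrm{sgn}(\xi-\eta)$ cannot be resolved by scale-only (Littlewood--Paley) decompositions, and one must work in the full time-frequency plane.

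First I would smoothly decompose $\mathrm{sgn}(\xi-\eta)$ into pieces $m_k$ supported in cones of frequency scale $2^k$ away from the diagonal $\xi=\eta$, thereby writing $H$ as a sum of bilinear Fourier multipliers with smooth symbols. After a Gabor-type discretization, each such piece becomes a \emph{discrete model sum}
\[
\Lambda(f,g,h) \;=\; \sum_{P} \frac{1}{|I_P|^{1/2}} \langle f,\phi_{P_1}\rangle\,\langle g,\phi_{P_2}\rangle\,\langle h,\phi_{P_3}\rangle,
\]
indexed by tri-tiles $P=(P_1,P_2,P_3)$ (triples of unit-area time-frequency rectangles sharing a common time interval $I_P$ and satisfying a sum-zero condition on their frequency intervals), where $\phi_{P_i}$ are $L^2$-normalized wave packets and $h$ is a dual function. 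Proving the theorem reduces, by standard duality and restricted weak-type interpolation, to bounding $\Lambda$ by $\|f\|_{p_1}\|g\|_{p_2}\|h\|_{p_3'}$ over finite collections of tri-tiles, uniformly in the collection.

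The heart of the argument is then combinatorial. I would introduce the \emph{size} of a collection $\mathbf{P}$ with respect to each function (a $\mathrm{BMO}$-type $L^2$ density of the relevant wave-packet coefficients) and a \emph{mass} measuring overlap of tiles with level sets. Trees are collections of tri-tiles whose tiles are nested under a common top; for a single tree the \emph{tree estimate} yields $|\Lambda_{\mathbf{T}}| \lesssim \mathrm{size}_1(\mathbf{T})\,\mathrm{size}_2(\mathbf{T})\,\mathrm{size}_3(\mathbf{T})\,|I_{\mathbf{T}}|$, proved by a John--Nirenberg inequality combined with a Bessel-type almost-orthogonality estimate along the tree. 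A \emph{selection algorithm} then iteratively extracts maximal trees of geometrically decreasing size, the total extracted measure being controlled via a $TT^\ast$ argument; summing the tree estimates over the extracted trees and optimizing in the size thresholds produces the desired trilinear bound, with the restriction $1/p_3<3/2$ emerging precisely from the constraint that all three sizes cannot simultaneously saturate.

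The main obstacle, and the technical core of the Lacey--Thiele proof, is the simultaneous control of all three wave-packet families in the tree estimate and in the selection lemma. Unlike the Carleson setting, no two of the three frequency parameters are forced apart from each other, so the standard ``overlapping'' versus ``separated'' dichotomy must be replaced by the finer tri-tile geometry, and the Bessel inequality used in the selection step requires a genuinely non-dyadic almost-orthogonality argument in frequency. Once that combinatorial package is assembled, passing from the model sum back to $H$ itself is routine via the smooth decomposition and summation over scales.
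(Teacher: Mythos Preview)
The paper does not contain a proof of this statement. Theorem~\ref{laceythiele} is quoted in the introduction as background, with a citation to the original papers \cite{LT1,LT4} of Lacey and Thiele; no argument is given or sketched beyond the remark that the modulation symmetry $H(M_af,M_ag)=M_{2a}H(f,g)$ forces a time-frequency analysis. So there is nothing in the paper to compare your proposal against.

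That said, your outline is a faithful high-level summary of the Lacey--Thiele strategy as it appears in \cite{LT1,LT4} and in the subsequent systematizations by Muscalu--Tao--Thiele: smooth Whitney decomposition of the symbol away from $\xi=\eta$, discretization into a model sum over tri-tiles, reduction by duality and restricted weak-type interpolation, definition of size and energy (your ``mass''), the single-tree estimate, and the greedy tree-selection algorithm with a Bessel/$TT^*$ bound on the total extracted measure. The one place where your language drifts slightly from the standard presentation is the final sentence about why $1/p_3<3/2$ appears: in the usual bookkeeping this constraint comes from the interpolation between size and energy bounds (each size is controlled by an $L^\infty$-type quantity, each energy by an $L^2$-type quantity, and the convexity region one obtains has vertices at the ``$L^2$ corners''), rather than from any saturation obstruction in the tree estimate itself. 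But this is a matter of phrasing, not a gap.
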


\mb We would like to remark that the bilinear Hilbert transform $H$ has the property that
$$H(M_af, M_ag)(x)= M_{2a}H(f,g)(x).$$

\mb This property is called the {\it modulation invariance} or {\it modulation symmetry}. A very precise time-frequency analysis is required to resolve this symmetry in order to obtain $L^p$ estimates for such operators. We refer the reader to the works of Gilbert and Nahmod ~\cite{GN,GN2} and of Muscalu, Tao, and Thiele ~\cite{MTT3b,MTT3} for the study of operators closely related to the bilinear Hilbert transform.

\mb Consider the operator
\be{eq:op} \pi_\omega(f,g)(x)=\int_{\R}\int_{\R}\hat{f}(\xi)\hat{g}(\eta){\bf 1}_\omega(\xi-\eta)e^{2\pi ix(\xi+\eta)}d\xi d\eta;  \; f,g\in \mathcal{S}(\R). \ee
Note that here the one dimensional interval $\omega$ gives rise to a strip $\{(\xi,\eta)\in \R^2: \xi-\eta \in \omega \}$ in $\R^2$. So the singularity is along the lines $\xi-\eta=a$ and $\xi-\eta=b$, where $a,b$ are the endpoints of the interval $\omega$. Using the same arguments as in the linear case, one can conclude the same $L^p$ estimates for the operator $\pi_\omega$ as for the bilinear Hilbert transform, with bounds independent of the interval $\omega$.

\mb We consider the bilinear Littlewood-Paley square function $(f,g)\rightarrow (\sum\limits_{n\in\Z}|\pi_{\omega_n}(f,g)|^2)^{\frac{1}{2}}$ associated with a sequence of intervals $\{\omega_n\}_{n\in\Z}$. At this point we would like to remark that unlike the linear case there is no passage available from smooth square functions to non-smooth square functions in bilinear setting. So, in the bilinear case these are two different problems. The first result in this direction is due to Lacey~\cite{lacey2}. He considered the bilinear analogue of smooth Carleson's Littlewood-Paley square function~(\ref{carlesonsmth}) and proved the following:
\medskip

\begin{theorem}[\cite{lacey2}]\label{laceysquare} Let $\chi \in C^\infty(\R)$ with $\it{supp}{\chi}\subseteq [0,1]$. Define $\chi_{[n,n+1]}(\xi)=\chi(\xi-n), \; n\in\Z$. Then, for $2\leq p_1,p_2\leq \infty$ satisfying
$\frac{1}{p_1}+\frac{1}{p_2}=\frac{1}{2}$, there is a constant $C>0$ such that
\begin{eqnarray}\label{laceysqre}
\sum\limits_{n\in\Z}\|T_{\chi_{[n,n+1]}}(f,g)\|_{L^{2}(\R)}^2 &\leq& C \|f\|_{L^{p_1}(\R)} \|g\|_{L^{p_2}(\R)}.
\end{eqnarray}
\end{theorem}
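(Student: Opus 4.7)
The plan is to derive a closed-form expression for $\sum_{n}|T_{\chi_n}(f,g)(x)|^2$ via Poisson summation and then estimate the resulting trilinear expression by Young's and H\"older's inequalities, exploiting the Schwartz decay of $\check{\chi}$.

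The first observation is that, since the multiplier $\chi(\xi-\eta-n)$ depends only on $\xi-\eta$, an inverse Fourier transform in this variable yields the representation
\[
T_{\chi_n}(f,g)(x) \;=\; \int_{\R} \varphi(y)\, e^{2\pi iny}\, f(x-y)\,g(x+y)\,dy,
\]
with $\varphi:=\check{\chi}$ a Schwartz function. Thus, for fixed $x$, the quantity $T_{\chi_n}(f,g)(x)$ is the Fourier transform of $G_x(y):=\varphi(y) f(x-y)g(x+y)$ evaluated at $-n$.

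The second step applies Poisson summation to $|\widehat{G_x}|^{2}$ to get
\[
\sum_{n\in\Z}|T_{\chi_n}(f,g)(x)|^2 \;=\; \sum_{k\in\Z}\int_{\R} G_x(y)\,\overline{G_x(y+k)}\,dy.
\]
Integrating in $x$ and performing the change of variables $u=x-y$, $v=x+y$ (Jacobian $1/2$) decouples the variables of $f$ and $g$ and gives
\[
\sum_{n\in\Z}\|T_{\chi_n}(f,g)\|_{L^2}^2 \;=\; \tfrac{1}{2}\sum_{k\in\Z}\int_{\R}\!\int_{\R} K_k(v-u)\,A_k(u)\,B_k(v)\,du\,dv,
\]
where $K_k(w):=\varphi(w/2)\overline{\varphi(w/2+k)}$, $A_k(u):=f(u)\overline{f(u-k)}$, and $B_k(v):=g(v)\overline{g(v+k)}$.

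The final step is a term-by-term estimate in $k$. Since $\varphi$ is Schwartz, a short computation yields $\|K_k\|_{L^1(\R)} \leq C_N(1+|k|)^{-N}$ for every $N\geq 1$. The inner $u$-integral is the convolution $K_k\ast A_k$, so Young's inequality and H\"older's inequality in $v$ yield
\[
\left|\int (K_k\ast A_k)(v)\,B_k(v)\,dv\right| \;\leq\; \|K_k\|_{L^1}\,\|A_k\|_{L^{p_1/2}}\,\|B_k\|_{L^{(p_1/2)'}}.
\]
The hypothesis $1/p_1+1/p_2=1/2$ is exactly the identity $(p_1/2)'=p_2/2$, and H\"older's inequality gives $\|A_k\|_{L^{p_1/2}}\leq\|f\|_{L^{p_1}}^2$ and $\|B_k\|_{L^{p_2/2}}\leq\|g\|_{L^{p_2}}^2$; these remain valid at the endpoints $p_i\in\{2,\infty\}$. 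Summing the rapidly decaying $k$-terms yields the theorem. The main (mild) technical point is justifying the Poisson summation uniformly in $x$; this holds by inspection for Schwartz $f,g$, and the extension to general $L^{p_1}\times L^{p_2}$ data follows by a standard density argument.
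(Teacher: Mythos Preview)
Your argument is correct. The kernel representation
\[
T_{\chi_n}(f,g)(x)=\int_{\R}\varphi(y)\,e^{2\pi i ny}\,f(x-y)\,g(x+y)\,dy
\]
is valid, the application of Poisson summation to $H=G_x*\overline{G_x(-\,\cdot\,)}$ is legitimate for Schwartz data, the change of variables cleanly decouples $f$ and $g$, and the Young--H\"older step uses the hypothesis $\tfrac{1}{p_1}+\tfrac{1}{p_2}=\tfrac{1}{2}$ in exactly the right way (it is precisely the statement that $p_1/2$ and $p_2/2$ are conjugate exponents). The rapid decay of $\|K_k\|_{L^1}$ follows from Peetre's inequality applied to the Schwartz function $\varphi=\check{\chi}$, and the endpoint cases $p_i\in\{2,\infty\}$ cause no difficulty.

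Your route, however, differs from the one the paper attributes to Lacey. There the proof proceeds by bounding each individual term $\|T_{\chi_{[n,n+1]}}(f,g)\|_{L^2}$ separately, uniformly in the choice of bump $\chi$, and then summing in $n$ (cf.\ inequality (3.6) of \cite{lacey2}). Your Poisson-summation argument is shorter and more elementary for the specific problem at hand, exploiting the integer-lattice structure of the translates and the special r\^ole of the $L^2$ norm. What Lacey's term-by-term approach buys is flexibility: because the bound on a single piece is uniform in the symbol, one can immediately replace $([n,n+1])_{n\in\Z}$ by any well-distributed collection of intervals of comparable length, which is precisely the generalisation recorded in Proposition~\ref{prop:lacey}. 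Your argument, by contrast, relies on the exact periodicity of the family $\{\chi(\cdot-n)\}_{n\in\Z}$ through Poisson summation, and would not extend verbatim to such collections.
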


\mb The proof of this theorem specifically depends on the exponent $2$. Mohanty and Shrivastava~\cite{MS} extended Theorem~\ref{laceysquare} for other exponents $p_3\neq2$. Their result is:

\begin{theorem}[\cite{MS}] \label{ms} Let $\phi \in \mathcal{S}(\R)$. Define $\phi_n(\xi)=\phi(\xi-n), \; n\in\Z$. Then for $2< p_1,p_2\leq \infty$ and $\frac{4}{3}<p_3\leq\infty$ satisfying
$\frac{1}{p_1}+\frac{1}{p_2}=\frac{1}{p_3}$, there is a constant $C>0$ such that
\begin{eqnarray}\label{ms1}
\left\|(\sum\limits_{n\in\Z}|T_{\phi_n}(f,g)|^2)^{\frac{1}{2}} \right\|_{L^{p_3}(\R)} &\leq& C \|f\|_{L^{p_1}(\R)} \|g\|_{L^{p_2}(\R)}.
\end{eqnarray}
\end{theorem}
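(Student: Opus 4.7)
The plan is to reduce the $\ell^2$-valued square function bound to a uniform scalar bilinear multiplier estimate via a Khintchine randomization, and then invoke a bilinear Mihlin-H\"ormander type theorem for symbols depending only on $\xi-\eta$. Let $(\epsilon_n)_{n\in\Z}$ be a Rademacher sequence. Pointwise in $x$, Khintchine's inequality yields
$$ \Bigl(\sum_n |T_{\phi_n}(f,g)(x)|^2\Bigr)^{p_3/2} \sim \mathbb{E}_\epsilon \Bigl|\sum_n \epsilon_n T_{\phi_n}(f,g)(x)\Bigr|^{p_3}. $$
Integrating in $x$ and applying Fubini, the theorem reduces to the uniform-in-$\epsilon$ estimate
$$ \|T_{m_\epsilon}(f,g)\|_{L^{p_3}(\R)} \leq C\,\|f\|_{L^{p_1}(\R)}\,\|g\|_{L^{p_2}(\R)}, $$
where $m_\epsilon(\xi,\eta) = \tilde m_\epsilon(\xi-\eta)$ and $\tilde m_\epsilon(\zeta) := \sum_n \epsilon_n \phi(\zeta-n)$. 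Because $\phi\in\mathcal{S}(\R)$, the series converges together with all its derivatives: $\|\tilde m_\epsilon^{(k)}\|_\infty \leq C_k$ uniformly in $\epsilon\in\{\pm1\}^{\Z}$.

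The core step is this uniform bilinear multiplier bound. Since $m_\epsilon$ depends only on $\xi-\eta$, the operator $T_{m_\epsilon}$ carries the same modulation symmetry as the bilinear Hilbert transform and therefore lies outside the Coifman-Meyer framework. The appropriate tool is a bilinear Mihlin-H\"ormander theorem for symbols of the form $\tilde m(\xi-\eta)$ (in the spirit of Grafakos-Li and Muscalu-Tao-Thiele): for $\tilde m \in C^\infty(\R)$ with finitely many uniformly bounded derivatives, $T_{\tilde m(\xi-\eta)}$ is bounded $L^{p_1}\times L^{p_2}\to L^{p_3}$ on the Lacey-Thiele range of Theorem~\ref{laceythiele}, with operator norm controlled by those derivatives. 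Applying this to $\tilde m_\epsilon$ yields the desired uniform bound, and Khintchine in reverse (via Kahane-Khintchine in $L^{p_3}$) recovers the square function norm and concludes the proof.

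For the sub-range $p_3\geq 2$ there is an alternative direct argument that bypasses the BHT machinery entirely. Writing the kernel representation
$$ T_{\phi_n}(f,g)(x) = \int_\R \check\phi(y)\,e^{2\pi iny}\,f(x+y)\,g(x-y)\,dy $$
and applying Plancherel for Fourier series on $\Z$ to the function $y\mapsto \check\phi(y)f(x+y)g(x-y)$ periodized over integers, one obtains
$$ \sum_n |T_{\phi_n}(f,g)(x)|^2 = \int_0^1 \Bigl|\sum_k \check\phi(y+k)f(x+y+k)g(x-y-k)\Bigr|^2 dy. $$
For $p_3\geq 2$, Minkowski's integral inequality permits swapping the $L^{p_3}_x$ and $L^2_y$ norms, and H\"older combined with the Schwartz decay of $\check\phi$ then closes the estimate directly in terms of $\|f\|_{L^{p_1}}\|g\|_{L^{p_2}}$.

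The main obstacle lies squarely in the sub-range $p_3\in(4/3,2)$, where Minkowski fails and the randomization together with the bilinear Mihlin-H\"ormander path is essential. The subtle point is that $\tilde m_\epsilon$ is smooth but does not decay at infinity, so the classical H\"ormander decay $|\tilde m^{(k)}(\zeta)|\lesssim|\zeta|^{-k}$ is not available off the shelf. One remedies this either by a ``flat'' bilinear Mihlin-H\"ormander theorem for bounded smooth symbols of $\xi-\eta$, or by a Littlewood-Paley decomposition of $\tilde m_\epsilon$ in $\zeta$ with Lacey-Thiele-type bounds summed across scales. The hypothesis $p_3>4/3$ together with $p_1,p_2>2$ is precisely what keeps every exponent in the Banach portion of the Lacey-Thiele range, where the duality and vector-valued arguments supporting this path close cleanly; pushing below $p_3=4/3$ requires substantially more delicate time-frequency analysis.
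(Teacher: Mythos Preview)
Your Plancherel/Minkowski argument for $p_3\geq 2$ is correct and pleasantly elementary; it gives the full range $p_3\geq 2$ with no restriction on $p_1,p_2$ beyond H\"older, which is actually stronger than what the cited approaches yield there.

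The gap is in the range $p_3\in(4/3,2)$. Your Khintchine reduction is valid, but the ``flat bilinear Mihlin--H\"ormander theorem'' you invoke for $\tilde m_\epsilon(\zeta)=\sum_n\epsilon_n\phi(\zeta-n)$ is not a known result, and in fact establishing it \emph{is} the problem. The Rademacher linearization works cleanly in the dyadic case treated by Bernicot and Diestel because $\sum_n\epsilon_n\psi(2^{-n}\zeta)$ genuinely satisfies $|\partial^k m(\zeta)|\lesssim|\zeta|^{-k}$ and hence falls under Gilbert--Nahmod. For integer translates this fails: $\tilde m_\epsilon$ has all derivatives merely bounded, and by Khintchine the uniform-in-$\epsilon$ bound on $T_{\tilde m_\epsilon}$ is \emph{equivalent} to the square-function estimate you are trying to prove. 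Your proposed Littlewood--Paley remedy does not close either: the piece $\tilde m_\epsilon\cdot\psi_j$ at frequency scale $2^j$ still has first derivative of size $O(1)$, not $O(2^{-j})$, so the Gilbert--Nahmod bound on each piece carries a loss $\sim 2^{jN}$ and the sum over $j$ diverges. The class $BS^0_{0,0}$ discussion in the introduction (and the B\'enyi--Torres counterexamples) is precisely a warning that bounded smooth symbols are not automatically good.

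For comparison: the original proof in \cite{MS} dominates the square function pointwise by Lacey's bilinear Hardy--Littlewood maximal function and then quotes its $L^{p_1}\times L^{p_2}\to L^{p_3}$ bounds; this is where the restriction $p_1,p_2>2$, $p_3>4/3$ enters. The present paper's Section~\ref{remainingindex} handles $1<p_3\leq 2$ differently: it decomposes $\phi=\sum_p\phi\cdot\chi(\cdot-p)$ into pieces adapted to unit intervals, applies the paper's main Theorem~\ref{thm:principa} to each resulting square function, and sums over $p$ using the Schwartz decay of $\phi$. Both routes supply the genuine input (a maximal function bound or a time--frequency square-function bound) that your Khintchine reduction presupposes but does not provide.
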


\mb In the proof of this theorem, the authors bounded the square function by the bilinear Hardy-Littlewood maximal operator (studied by Lacey in~\cite{lacey}).  That is why the range of exponents depends on the one of this bilinear maximal function.
By using a particular case of the result which we have proved in this paper, we can conclude Theorem~(\ref{ms}) for the remaining exponents $p_3\in(1,2]$(See~\S~\ref{remainingindex}). The bilinear analogue of smooth dyadic square function
 has been addressed by Bernicot~\cite{B} and Diestel~\cite{Diestel}. They proved that

\begin{theorem}[\cite{B,Diestel}]
Let $\psi\in \mathcal{S}(\R)$ be non-negative, have support in $\frac{1}{2}\leq |\xi|\leq 4$ and be equal to $1$ on $1\leq |\xi|\leq 2.$ Define $\psi_n(\xi):= \psi(2^{-n}\xi), \; n\in\Z.$ Then for exponents $p_1, p_2, p_3$ satisfying $1<p_1,p_2\leq \infty$ and
$\frac{1}{p_1}+\frac{1}{p_2}=\frac{1}{p_3}<\frac{3}{2}$, there is a constant $C>0$ such that for all $f,g \in \mathcal{S}(\R)$ we have

\begin{eqnarray}\label{bilittlewoodsmth}
\left\|(\sum\limits_{n\in\Z}|T_{\psi_n}(f,g)|^2)^{\frac{1}{2}}\right\|_{L^{p_3}(\R)} &\leq& C \|f\|_{L^{p_1}(\R)} \|g\|_{L^{p_2}(\R)}.
\end{eqnarray}
\end{theorem}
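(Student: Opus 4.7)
The plan is to decompose $T_{\psi_n}$ paradifferentially and to handle each piece by a vector-valued ($\ell^2$-valued) Littlewood--Paley or bilinear Calder\'on--Zygmund argument. The two inputs I would rely on are the kernel representation
\[
T_{\psi_n}(f,g)(x)=\int_{\R}\check{\psi}_n(t)\,f(x+t)\,g(x-t)\,dt,
\]
obtained by writing $\psi_n(\xi-\eta)=\int \check{\psi}_n(t)\,e^{2\pi it(\xi-\eta)}dt$ and splitting the resulting Fourier integrals, which exhibits $T_{\psi_n}$ as a smooth bilinear convolution with an $L^1$-normalized kernel concentrated at scale $2^{-n}$; together with the Fourier support observation that since $\psi_n(\xi-\eta)$ lives on $|\xi-\eta|\sim 2^n$, at any point of its support at least one of $|\xi|,|\eta|$ is of order $2^n$.

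Using standard Littlewood--Paley projections $\Delta_k$ and low-frequency cut-offs $S_k=\sum_{j\leq k}\Delta_j$, I would write
\[
T_{\psi_n}(f,g)=A_n(f,g)+B_n(f,g)+C_n(f,g),
\]
with $A_n=T_{\psi_n}(S_{n-4}f,\widetilde{\Delta}_n g)$ the low-high paraproduct, $B_n=T_{\psi_n}(\widetilde{\Delta}_n f,S_{n-4}g)$ the symmetric high-low piece, and $C_n=\sum_{k>n-4}T_{\psi_n}(\Delta_k f,\widetilde{\Delta}_k g)$ the high-high remainder. For $A_n$, the kernel representation yields the pointwise paraproduct bound $|A_n(f,g)(x)|\lesssim Mf(x)\cdot|\widetilde{\Delta}_n g(x)|$, where $M$ is the Hardy--Littlewood maximal operator, and then H\"older together with the classical Littlewood--Paley inequality gives
\[
\Big\|\Big(\sum_n|A_n(f,g)|^2\Big)^{1/2}\Big\|_{L^{p_3}}\lesssim \|Mf\|_{L^{p_1}}\,\Big\|\Big(\sum_n|\widetilde{\Delta}_n g|^2\Big)^{1/2}\Big\|_{L^{p_2}}\lesssim \|f\|_{L^{p_1}}\|g\|_{L^{p_2}},
\]
valid on the full range $1<p_1\leq\infty$, $1<p_2<\infty$. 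The high-low term $B_n$ is treated symmetrically.

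The high-high piece $C_n$ is the main difficulty, because one can no longer extract orthogonality in $n$ from a paraproduct structure. I would view $(f,g)\mapsto (C_n(f,g))_n$ as a single bilinear operator with values in $\ell^2$ and apply vector-valued bilinear Calder\'on--Zygmund theory. This requires (i)~verifying that the vector-valued kernel satisfies standard $\ell^2$-valued size and smoothness estimates, which follows from the Schwartz regularity of $\check{\psi}$ (including its cancellation since $\psi(0)=0$) and the scale $2^{-n}$ localization; and (ii)~establishing a single initial bound, most naturally $L^2\times L^2\to L^1(\ell^2)$. Step (ii) is the crux: I would reduce it, via Plancherel and the Littlewood--Paley characterization of $L^2$, to an almost-orthogonality (Cotlar--Stein / $TT^\ast$ type) statement between the $C_n$, which is delicate because $C_n(f,g)$ does not carry a direct frequency localization but rather a bilinear one inherited from the $\Delta_k$-decompositions of $f$ and $g$.

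Once the initial $L^2\times L^2\to L^1(\ell^2)$ bound is secured, bilinear Calder\'on--Zygmund extrapolation extends it to the full range $\tfrac{1}{p_1}+\tfrac{1}{p_2}<\tfrac{3}{2}$; the restriction $p_3>2/3$ is precisely the one imposed by the Hardy-space endpoint of that extrapolation, as in Lacey--Thiele. Combining the estimates on $A_n$, $B_n$ and $C_n$ then yields~\eqref{eq:op}-style control and completes the theorem.
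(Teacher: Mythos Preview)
Your paraproduct pieces $A_n$ and $B_n$ are essentially fine (the pointwise bound should really read $|A_n|\lesssim Mf\cdot M(\widetilde{\Delta}_n g)$ and then you invoke Fefferman--Stein, but that is a cosmetic fix). The genuine problem is the high--high term $C_n$, and here your proposed route through vector-valued bilinear Calder\'on--Zygmund theory does not go through. The symbol $\psi_n(\xi-\eta)$ depends only on $\xi-\eta$, so the bilinear kernel of $T_{\psi_n}$ is the distribution $\check{\psi}_n\big(\tfrac{z-y}{2}\big)\,\delta\big(x-\tfrac{y+z}{2}\big)$, supported on the line $2x=y+z$; it is \emph{not} a function satisfying size/smoothness bounds of the form $(|x-y|+|x-z|)^{-2}$. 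Pre-composing with the diagonal Littlewood--Paley projection $\sum_{k>n-4}\Delta_k\otimes\widetilde{\Delta}_k$ does not repair this: away from a neighbourhood of the origin that projection is essentially the identity on the strip $|\xi-\eta|\sim 2^n$, so the kernel of $C_n$ inherits the same singular support. Consequently step~(i) of your plan (the $\ell^2$-valued kernel estimates) fails, and step~(ii), the initial $L^2\times L^2\to L^1(\ell^2)$ bound, cannot be obtained by a Cotlar--Stein argument --- it is essentially the full modulation-invariant problem and requires time-frequency analysis. A related symptom: bilinear Calder\'on--Zygmund extrapolation would produce the range $p_3>\tfrac12$, whereas the restriction $p_3>\tfrac23$ in the statement is the Lacey--Thiele barrier coming precisely from that time-frequency analysis, not from any Hardy-space endpoint of Calder\'on--Zygmund theory.

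The argument actually used in \cite{B,Diestel}, as summarized in the paper, sidesteps all of this in one stroke: linearize the square function via Khintchine's inequality, so that it suffices to bound $\sum_n\epsilon_n T_{\psi_n}$ uniformly in the choice of signs $(\epsilon_n)$. This operator has symbol $m_\epsilon(\xi-\eta)$ with $m_\epsilon(\lambda)=\sum_n\epsilon_n\psi(2^{-n}\lambda)$, and by the bounded overlap and dyadic scaling of the $\psi_n$ the function $m_\epsilon$ is a uniform H\"ormander--Mikhlin multiplier on $\R$. One is therefore exactly in the Gilbert--Nahmod framework \cite{GN,GN2} for bilinear multipliers whose symbol is a Mikhlin function of $\xi-\eta$, and their theorem (built on the Lacey--Thiele machinery) gives boundedness on the full range $1<p_1,p_2\le\infty$, $\tfrac{1}{p_1}+\tfrac{1}{p_2}=\tfrac{1}{p_3}<\tfrac32$. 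No paraproduct splitting is needed, and the modulation invariance --- which obstructs your Calder\'on--Zygmund approach --- is handled inside the cited black box.
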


\mb Here the authors have linearized the square function using Radamacher functions and showed that the resulting operator falls under the setting of
Gilbert and Nahmod~\cite{GN} and then they can deduce the desired conclusion.

\mb The $L^p$ estimates for non-smooth bilinear Littlewood-Paley square functions seem to be very hard to prove. There is only one
result known in this direction, which is due to Bernicot~\cite{B}. He proved the following.

\begin{theorem}[\cite{B}] \label{bernicot}  Let $\omega_n = [a_n, b_n]$ be a sequence of
disjoint intervals in $\R$ with $b_n - a_n
= b_{n-1}- a_{n-1}$ and $a_{n+1}- b_n = a_n - b_{n-1}$ for all
$n\in\Z$. Then for exponents $2 < p_1,p_2,p'_3 < \infty $ satisfying $\frac{1}{p_1}+ \frac{1}{p_2}= \frac{1}{p_3}$,
there is a constant $C=C(p_1,p_2,p_3)$ such that for all functions
$f,g \in \mathcal{S}(\R)$ we have
\begin {eqnarray} \left\| (\sum\limits_{n\in\Z}|\pi_{\omega_n}(f,g)|^2)^{\frac{1}{2}} \right\|_{L^{p_3}(\R)} &\leq& C \|f\|_{L^{p_1}(\R)}
\|g\|_{L^{p_2}(\R)}.
\end {eqnarray}
\end{theorem}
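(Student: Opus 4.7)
The plan is to randomize the square function with Rademacher signs and thereby reduce the problem to a uniform estimate for a family of bilinear multipliers. By Khintchine's inequality,
$$\left\| \Bigl( \sum_{n\in\Z} |\pi_{\omega_n}(f,g)|^2 \Bigr)^{1/2} \right\|_{L^{p_3}(\R)} \sim \Bigl( \mathbb{E}_\epsilon \| T^\epsilon(f,g) \|_{L^{p_3}(\R)}^{p_3} \Bigr)^{1/p_3},$$
where $T^\epsilon(f,g) := \sum_n \epsilon_n \pi_{\omega_n}(f,g)$ and $(\epsilon_n)_{n\in\Z}$ is a sequence of independent Rademacher signs. The operator $T^\epsilon$ is a bilinear multiplier whose symbol $m^\epsilon(\xi,\eta) = h^\epsilon(\xi - \eta)$ depends only on the bilinear-Hilbert-type variable $\xi - \eta$, with $h^\epsilon(t) = \sum_n \epsilon_n \mathbf{1}_{\omega_n}(t)$ a bounded function whose jump set $\{a_n, b_n : n \in \Z\}$ is arithmetic of common step $d := (b_n - a_n) + (a_{n+1} - b_n)$, independent of $n$.

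The heart of the argument is to exploit this arithmetic regularity by decomposing $\mathbf{1}_{\omega_n}$ as $\phi_n + \psi_n$, where $\phi_n(t) = \phi(t - nd)$ for a smooth bump $\phi$ with $\phi \equiv 1$ on $\omega_0$ and with $\text{supp}\,\phi$ a slight enlargement of $\omega_0$, while $\psi_n$ is a remainder supported in thin collars around the endpoints $a_n,b_n$. Splitting $T^\epsilon = T^\epsilon_{\mathrm{s}} + T^\epsilon_{\mathrm{b}}$ accordingly, the smooth piece $T^\epsilon_{\mathrm{s}}$ has symbol $\sum_n \epsilon_n \phi_n(\xi-\eta)$, which is exactly of the type addressed by the Bernicot--Diestel smooth bilinear square function bound above; a randomized version of that theorem, itself resting on Gilbert--Nahmod paraproduct estimates \cite{GN}, delivers a bound on $T^\epsilon_{\mathrm{s}}$ that is uniform in $\epsilon$.

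For the boundary piece $T^\epsilon_{\mathrm{b}}$, each summand is concentrated in a thin strip around one of the lines $\xi - \eta = a_n$ or $\xi - \eta = b_n$ in frequency space. Freezing $\eta$ (equivalently, disintegrating the bilinear multiplier along the $\eta$-slices) reveals a linear Rubio de Francia configuration in the $\xi$ variable, namely an arbitrary sequence of thin disjoint intervals $[\eta + a_n, \eta + a_n + \delta]$ (and similarly at $b_n$). A vector-valued extension of Theorem~\ref{rubio} in $\xi$, combined with Minkowski's inequality in the remaining factor $g$, controls $T^\epsilon_{\mathrm{b}}$ uniformly in $\epsilon$; summing the two contributions and reversing Khintchine then yields the theorem. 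The main obstacle is precisely this boundary piece: the jumps of $\mathbf{1}_{\omega_n}$ rule out any direct application of Coifman--Meyer or of the smooth bilinear Calder\'on--Zygmund theory, and the scheme works only because the equi-spacing hypothesis enforces an arithmetic structure of the jump lines (allowing the linear Rubio de Francia input) and because the local-$L^2$ range $p_1,p_2,p'_3 > 2$ is exactly the regime in which the vector-valued Rubio de Francia estimate and the Gilbert--Nahmod type paraproduct bounds are both simultaneously available.
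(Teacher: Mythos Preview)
Your proposal has a genuine gap in the treatment of the boundary piece $T^\epsilon_{\mathrm b}$, and the scheme as written does not go through.

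First, the decomposition $\mathbf 1_{\omega_n}=\phi_n+\psi_n$ does not remove the singularity: whichever way you choose the smooth bump $\phi_n$ (supported slightly inside or slightly outside $\omega_n$), the remainder $\psi_n$ still carries the jump of $\mathbf 1_{\omega_n}$ at $a_n$ and $b_n$. So $T^\epsilon_{\mathrm b}$ is just as singular as the original operator, only with symbol localized to thin strips $\{\xi-\eta\approx a_n\}$ and $\{\xi-\eta\approx b_n\}$ parallel to the modulation-invariant direction. Second, the ``freeze $\eta$ and apply linear Rubio de Francia'' step is not a valid operation on bilinear multipliers: for a symbol $m(\xi-\eta)$ the inner $\xi$-integral depends on both $x$ and $\eta$, so there is no linear multiplier in $f$ to which Theorem~\ref{rubio} applies, and the subsequent ``Minkowski in $g$'' would in any case require $\widehat g\in L^1$, which $g\in L^{p_2}$ does not give. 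Moreover, Rubio de Francia's constant is independent of the intervals, so the thinness of the collars buys nothing. This is precisely the obstruction the paper flags in the introduction: \emph{unlike the linear case there is no passage available from smooth square functions to non-smooth square functions in the bilinear setting}. (A smaller point: the Bernicot--Diestel result you cite for the smooth piece concerns dyadic dilates $\psi(2^{-n}\cdot)$, not integer translates $\phi(\cdot-nd)$; the translate case is Lacey's Theorem~\ref{laceysquare} / Theorem~\ref{ms}, and in the range $1<p_3\le 2$ it requires the main theorem of the present paper.)

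The actual proof in \cite{B} does not attempt to separate smooth from boundary contributions. It runs a full time-frequency analysis in the spirit of the bilinear Hilbert transform, but with $\ell^2$-valued model sums: the trilinear form is discretized into wave packets indexed by tri-tiles, and one introduces \emph{vectorized} notions of tree, size and energy (cf.\ Definitions~\ref{def:tvect}--\ref{def:energysequence} here, or \S 3 of \cite{B}) so that the square-function index $n$ is absorbed into the combinatorics. Size and energy estimates for these vector quantities, together with the standard tree-selection algorithm, then yield the restricted weak-type bounds in the local-$L^2$ range. The equi-spacing hypothesis on $\{\omega_n\}$ is used not to feed a linear Rubio de Francia argument, but to control the geometry of the frequency tiles in the vectorized size/energy lemmas.
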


\mb One can easily observe that the bilinear analogue of non-smooth Carleson's Littlewood-Paley theorem~(\ref{carleson}) is a particular case of Theorem~(\ref{bernicot}). The proof of this result is based on time-frequency techniques. The author has introduced a new notion of ``vector trees, vector size, and vector energy" (see \S $3$ of~\cite{B} for precise definitions) in his work and proved appropriate estimates for these quantities to obtain the desired result.

\mb The proof of the previous theorem relies on standard time-frequency analysis (used for the bilinear Hilbert transforms) mixed with $\ell^2$-valued arguments to deal with the square function. But it is based on a geometric assumption : the intervals $\omega\in \Omega$ are exactly well-distributed, they have the same length and are equi-distant.\\
We can expect that the result remains true for more general collections $\Omega$ but the analysis seems to be very hard. That is why, in this paper we aim to study the smooth square functions, which should be easier to study.

\mb In this paper we have obtained $L^p$ estimates for smooth bilinear square functions. Some of our arguments are general and can be extended to more general situations. But, we could complete the proof of our main result only under the following assumption:

\begin{equation} \label{assumption}
\inf_{\omega \in \Omega} |\omega| \simeq \sup_{\omega \in \Omega} |\omega|.
\end{equation}

\begin{theorem} \label{thm:principa} Let $\Omega:=(\omega)_{\omega\in\Omega}$ be a well-distributed
collection of intervals satisfying (\ref{assumption}). \\
Then for exponents $p_1,p_2,p_3'\in [2,\infty]$ satisfying
$$0<\frac{1}{p_3}=\frac{1}{p_1}+\frac{1}{p_2},$$
there exists a constant $C$, independent of the collection $\Omega$, such that for all $f,g \in \mathcal{S}(\R)$, we have
$$ \left\| \left(\sum_{\omega\in\Omega} \left| T_{\chi_{\omega}}(f,g)\right|^2 \right)^{1/2}\right\|_{L^{p_3}(\R)} \leq C\|f\|_{L^{p_1}(\R)} \|g\|_{L^{p_2}(\R)}.$$
\end{theorem}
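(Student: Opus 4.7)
The strategy is to linearize the square function via Rademacher functions and then reduce to a known bilinear multiplier theorem for modulation-invariant symbols. First, by assumption~(\ref{assumption}) we may rescale and assume $|\omega|\simeq 1$ for all $\omega\in\Omega$; well-distributedness then says the cutoffs $\chi_\omega$ are unit-scale bumps with essentially integer-spaced centers and bounded overlap. Khintchine's inequality gives
\begin{equation*}
\Big\|\Big(\sum_\omega|T_{\chi_\omega}(f,g)|^2\Big)^{1/2}\Big\|_{L^{p_3}}^{p_3} \simeq \int_0^1\Big\|\sum_\omega r_\omega(t)\,T_{\chi_\omega}(f,g)\Big\|_{L^{p_3}}^{p_3}\,dt,
\end{equation*}
so it is enough to establish, uniformly in $t\in[0,1]$,
\begin{equation*}
\|T_{m_t}(f,g)\|_{L^{p_3}(\R)} \le C\|f\|_{L^{p_1}(\R)}\|g\|_{L^{p_2}(\R)},\qquad m_t(\xi,\eta):=\sum_{\omega}r_\omega(t)\,\chi_\omega(\xi-\eta).
\end{equation*}

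The linearized symbol $m_t$ depends only on the single variable $\xi-\eta$, so $T_{m_t}$ is modulation-invariant. Bounded overlap of the unit-scale cutoffs gives
\begin{equation*}
\sup_{t\in[0,1]}\|m_t^{(k)}\|_{L^\infty(\R)}\le C_k\qquad\text{for every }k\ge 0,
\end{equation*}
so $m_t$ lies in $C^\infty_b(\R)$ uniformly in $t$ and in $\Omega$. For such modulation-invariant bilinear symbols with smooth bounded $m$, the time-frequency analysis of Gilbert-Nahmod~\cite{GN} and Muscalu-Tao-Thiele~\cite{MTT3b} produces an $L^{p_1}\times L^{p_2}\to L^{p_3}$ bound whose constant depends only on a finite number of the seminorms $\|m^{(k)}\|_\infty$, throughout the Lacey-Thiele range. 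The range $p_1,p_2,p_3'\in[2,\infty]$ of Theorem~\ref{thm:principa} lies strictly inside that range, so combining the derivative estimates above with this multiplier bound produces the required uniform estimate.

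The main obstacle is precisely the last step: one needs a modulation-invariant bilinear multiplier theorem whose constants depend only on a finite $C^N_b$-seminorm of the symbol (not on its support or on any decay hypothesis), so that uniformity in the Rademacher parameter $t$ and in the collection $\Omega$ is preserved. The linearized symbol $m_t$ is smooth and bounded but has no decay in $\xi-\eta$, so Coifman-Meyer type results do not apply and one must use the full Lacey-Thiele time-frequency machinery in the form established in~\cite{GN,MTT3b}. This also clarifies why the comparable-length hypothesis~(\ref{assumption}) is essential to the argument: reduction to a single unit scale is what allows the linearized symbol to be treated as a single $C^\infty_b$ multiplier with uniformly controlled seminorms, whereas without it one faces a genuinely multi-scale symbol for which no such uniform bound is presently available.
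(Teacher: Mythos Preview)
Your reduction via Khintchine's inequality to the linearized multiplier $m_t(\xi-\eta)=\sum_\omega r_\omega(t)\chi_\omega(\xi-\eta)$ is correct, as is the observation that $m_t\in C^\infty_b(\R)$ with seminorms bounded uniformly in $t$ and in $\Omega$. The genuine gap is the next step: the references \cite{GN} and \cite{MTT3b,MTT3} do \emph{not} furnish a bilinear multiplier theorem for arbitrary symbols of the form $m(\xi-\eta)$ with $m\in C^\infty_b$. Those results (like all modulation-invariant bilinear multiplier theorems available in the literature) require Mikhlin--H\"ormander type decay away from the singular line, i.e.
\[
|\partial^\alpha m(\xi,\eta)|\lesssim \operatorname{dist}\big((\xi,\eta),\Gamma\big)^{-|\alpha|}.
\]
Your linearized symbol has $|\partial^\alpha m_t|\le C_\alpha$ uniformly, but at distance $|\xi-\eta|\simeq N$ from $\Gamma=\{\xi=\eta\}$ this is of order $1$, not $N^{-|\alpha|}$. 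So $m_t$ lies only in the exotic class $BS^0_{0,0}$, for which (as recalled in the introduction via \cite{BT2}) no general boundedness is available; the boundedness of such multipliers is essentially equivalent to the theorem you are trying to prove, and invoking it here is circular.

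It is instructive to compare with the smooth \emph{dyadic} square function (inequality~(\ref{bilittlewoodsmth})), where exactly your Rademacher-linearization argument does succeed: there $\psi_n(\lambda)=\psi(2^{-n}\lambda)$, so the linearized symbol $\sum_n r_n(t)\psi(2^{-n}\lambda)$ genuinely satisfies $|\partial_\lambda^k\,(\cdot)|\lesssim|\lambda|^{-k}$ and falls under \cite{GN}. The equal-length assumption~(\ref{assumption}) is precisely what destroys this decay here. The paper therefore proceeds differently: it dualizes the square function to an $\ell^2$-valued trilinear form (\ref{eq:trilinear}), discretizes into model sums over tri-tiles at a single scale, and introduces \emph{vectorized} tri-tiles together with adapted $\ell^2$-valued notions of size and energy (Definitions~\ref{def:tvect}--\ref{def:energysequence}). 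The size estimates (Theorems~\ref{thm:size1} and~\ref{thm:size2}) bring in Rubio de Francia's linear square-function inequality, and combinatorial decomposition lemmas (Propositions~\ref{prop:algo},~\ref{prop:algo1}) replace the usual tree selection. None of this is a black-box citation of \cite{GN,MTT3b}; it is a new time-frequency argument tailored to the single-scale, non-decaying situation.
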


\mb By the counterexample in \cite{B}, we know that $p_1, p_2\geq 2$ is necessary to get boundedness in above theorem.

\mb Let us point out that in this work, our approach allows to consider the case of a collection of intervals with equivalent lengths. Whereas in \cite{B}, the result relies on a tricky approach based on the fact that the intervals are well-distributed (with equal lengths and equi-distant).

\gb Moreover, we would like to describe another motivation to our work, concerning bilinear pseudo-differential operators. We refer the reader to \cite{pseudo, pseudo1} for works of the first author and of B\'enyi, Torres and Nahmod \cite{bipseudo, BT}. There, several classes of bilinear pseudo-differential symbols are studied (see Section \ref{sec:bipseudo} for more details). We are specially interested in the "exotic" class $BS^0_{0,0}$ which is the following one:
a function $\sigma\in C^\infty(\R^3)$ belongs to $BS^0_{0,0}$ if
$$ \left| \partial _x^a \partial _\xi ^b \partial _\eta ^c \sigma (x, \xi ,\eta ) \right|\leq C_{a,b,c},$$
for all indices $a,b$ and $c$.
This class corresponds to the bilinear version of the linear class of symbols $S^0_{0,0}$. For such a symbol, we look for boundedness of the associated operator
$$T_\sigma(f,g)(x):= \int_{\R^2} e^{ix(\xi+\eta)} \widehat{f}(\xi)  \widehat{g}(\eta) \sigma(x,\xi,\eta) d\xi d\eta.$$
In \cite{CV}, Calder\'on and Vaillancourt proved that the linear operators associated with symbols in $S^0_{0,0}$ are bounded on $L^2(\R)$. A natural question arises for the bilinear operators ?

In this framework, a first negative answer has been given by B\'enyi and Torres in \cite{BT2}: for every exponents $p,q,r\in [1,\infty)$ satisfying H\"older rule, there exist symbols $\sigma\in BS^0_{0,0}$ (which are $x$-independent) such that the bilinear operator $T_\sigma$ is not bounded from $L^p(\R) \times L^q(\R)$ to $L^r(\R)$. \\
From this point, a question arises : which extra assumption on a symbol $\sigma\in BS^0_{0,0}$ yields the boundedness of the corresponding operator ?

This work has also the motivation to give some answers to this question for the local-$L^2$ case. We refer the reader to Section \ref{sec:bipseudo} for more details concerning existing results. \\
The class $BS^0_{0,0}$ is invariant under rotation in the two frequency variables. Unfortunately, the study of the non-smooth square functions in Theorem \ref{bernicot} is based on the time-frequency analysis used for the bilinear Hilbert transforms. It is now well-known that for such operators, there are three "forbidden" lines
$$ \{(\xi,\eta),\ \xi=0\}, \quad \{(\xi,\eta),\ \eta=0\} \quad \textrm{and} \quad \{(\xi,\eta),\ \xi+\eta=0\}.$$
So, Theorem \ref{bernicot} can be extended to the case where
symbols ${\bf 1}_\omega(\xi-\eta)$ are replaced by symbols
${\bf 1}_\omega(\xi-t \eta)$ with $t\notin\{0,-1,\infty\}$.
Since we are interested in the class $BS_{0,0}^0$ (where these three degenerate lines do not play a role), we do not want to use Theorem \ref{bernicot}. Indeed our new approach for Theorem \ref{thm:principa} will allow us to consider these ``degenerate" lines
as well as the other ones. \\
These considerations are also another motivation for Theorem \ref{thm:principa}. We will define classes $W^{1,s}_\theta(BS^0_{0,0})$ of bilinear symbols (included in $BS^0_{0,0}$) (see Definition \ref{def:bs}) and obtain the following result for the associated bilinear operators.

\begin{theorem} For every $\theta\in {\mathbb S}^1$ and $s\in(1,2]$, consider a symbol $\sigma\in W^{1,s}_\theta(BS^0_{0,0})$. Then the associated operator $T_{\sigma}$ is bounded in the local $L^2$-case, i.e., $T_{\sigma}$ is bounded from $L^p(\R) \times L^q(\R)$ into $L^r(\R)$ for exponents $p,q,r' \in[2,\infty]$ satisfying
$$ 0<\frac{1}{r}= \frac{1}{p}+\frac{1}{q}.$$
Moreover, the estimates are uniform with respect to $\theta\in {\mathbb S}^1$.
\end{theorem}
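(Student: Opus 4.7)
The plan is to reduce this theorem to Theorem \ref{thm:principa} via a Fourier-series decomposition of the symbol $\sigma$ in the direction $\theta$. The angle $\theta\in\mathbb{S}^1$ is arbitrary and may align with any of the three forbidden directions $\xi=0$, $\eta=0$, $\xi+\eta=0$ of bilinear Hilbert analysis, so Theorem \ref{bernicot} is unavailable; this is precisely the motivation for Theorem \ref{thm:principa}, which treats smooth bumps in an arbitrary direction.

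In rotated frequency coordinates $(u,v)=R_\theta(\xi,\eta)$ with $u$ aligned with $\theta$, I choose a smooth partition of unity $\{\psi(\cdot-n)\}_{n\in\Z}$ on $\R$ with $\psi$ supported in $[0,1]$ and decompose $\sigma=\sum_n\psi(u-n)\sigma$. On each unit strip I periodize in $u$ and expand each summand as a Fourier series:
$$\psi(u-n)\,\sigma(x,\xi,\eta) = \tilde\psi(u-n)\sum_{k\in\Z}c_{n,k}(x,v)\,e^{2\pi iku},$$
with $\tilde\psi$ a slightly fattened bump still supported in a unit-width strip. The defining $W^{1,s}$-regularity of $\sigma$ along $\theta$, combined with Hausdorff-Young on $\mathbb{T}$ (valid because $s\in(1,2]$ gives $s'\geq 2$), yields a weighted $\ell^{s'}$-bound on the coefficients $c_{n,k}(x,v)$ uniformly in $n$, $x$, $v$ and in $\theta$; H\"older's inequality with exponents $s$ and $s'$ (legitimate since $s>1$ makes $(1+|k|)^{-1}\in\ell^s(\Z)$) then converts this to the pointwise bound $\sup_{n,x,v}\sum_{k\in\Z}|c_{n,k}(x,v)|\leq C$. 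The phase factor $e^{2\pi iku}=e^{2\pi ik(\theta_1\xi+\theta_2\eta)}$ corresponds on the bilinear side to translating $f$ by $k\theta_1$ and $g$ by $k\theta_2$, both $L^p$-isometries, so these translations can be harmlessly absorbed.

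For each fixed $k$, the operator reduces to $\sum_{n\in\Z}T_{c_{n,k}(x,v)\tilde\psi(u-n)}(f,g)$. The family $\{\tilde\psi(u-n)\}_{n\in\Z}$ is a well-distributed collection of smooth bumps on unit-length intervals in direction $\theta$, so hypothesis (\ref{assumption}) is satisfied and Theorem \ref{thm:principa} gives
$$\left\|\Bigl(\sum_n|T_{\tilde\psi(u-n)}(f,g)|^2\Bigr)^{1/2}\right\|_{L^{p_3}}\leq C\|f\|_{L^{p_1}}\|g\|_{L^{p_2}}.$$
A vector-valued duality argument---pairing with $h\in L^{p_3'}$ and using Cauchy-Schwarz in $n$ while absorbing the uniformly bounded, smooth-in-$v$ coefficients $c_{n,k}(x,v)$ via a paraproduct-type decoupling---transfers this to a bound on $\sum_n T_{c_{n,k}\tilde\psi(u-n)}(f,g)$ by $C\cdot\sup_{n,x,v}|c_{n,k}(x,v)|\cdot\|f\|_{L^{p_1}}\|g\|_{L^{p_2}}$. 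Summing over $k$ with the $\ell^1$-bound from the previous paragraph then yields the theorem, with constants uniform in $\theta$.

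The main obstacle is the decoupling step above: because $c_{n,k}(x,v)$ depends on both the frequency integration variable $v$ and the output variable $x$, it cannot be factored out of the bilinear multiplier pointwise, so one cannot naively reduce to the clean square function for $T_{\tilde\psi(u-n)}$. One must instead treat $c_{n,k}(x,v)$ as a smooth $x$-parametrized multiplier in $v$ (bounded uniformly thanks to $BS^0_{0,0}$) and absorb it into the $\ell^2$-valued bilinear framework via a paraproduct/Coifman-Meyer-type argument, exploiting the $v$-smoothness of $c_{n,k}$ together with the square-function bound already furnished by Theorem \ref{thm:principa}. Arranging that this decoupling is uniform both in $n\in\Z$ and in $\theta\in\mathbb{S}^1$---so that the final estimate is $\theta$-uniform---will be the technical heart of the proof.
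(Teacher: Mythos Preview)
Your proposal has two genuine gaps.

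\textbf{The decoupling step is circular.} Your claimed bound
\[
\Bigl\|\sum_{n\in\Z} T_{c_{n,k}\tilde\psi(u-n)}(f,g)\Bigr\|_{L^{p_3}} \leq C\cdot \sup_{n,x,v}|c_{n,k}(x,v)|\cdot \|f\|_{L^{p_1}}\|g\|_{L^{p_2}}
\]
cannot hold. The symbol $\sigma_k(x,\xi,\eta)=\sum_n c_{n,k}(x,v)\tilde\psi(u-n)$ is, for fixed $k$, a generic element of $BS^0_{0,0}$ with seminorms comparable to $\sup_{n,x,v}|c_{n,k}|$ (and its $\partial_x,\partial_v$-analogues), and B\'enyi--Torres's counterexamples show that such symbols need not give bounded operators. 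Your Fourier-series expansion has placed all of the $W^{1,s}$-decay along $\theta$ into the $k$-sum (via Hausdorff--Young), leaving nothing to control the $n$-sum. A Cauchy--Schwarz-in-$n$ argument against the square function $(\sum_n|T_{\tilde\psi(u-n)}|^2)^{1/2}$ would require $\ell^2$-summability of the sizes $b_n=\sup_{x,v}|c_{n,k}(x,v)|$, and for that you must go back to the $W^{1,s}$ hypothesis a second time---it gives $\sum_n b_n^s<\infty$, hence $\sum_n b_n^2<\infty$ since $s\leq 2$ and the $b_n$ are bounded. Even then, since $c_{n,k}$ depends on $(x,v)$ and not merely on $n$, you cannot factor it out; you would need the strengthened square-function bound of Remark~\ref{rem:impp2} with normalized symbols $c_{n,k}\tilde\psi(u-n)/b_n$, and this in turn requires uniform control on all $(x,v)$-derivatives of $c_{n,k}/b_n$, which does not follow from the hypotheses. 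The vague appeal to ``paraproduct-type decoupling'' does not close this.

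\textbf{The degenerate direction.} Theorem~\ref{thm:principa} (in the extended form of Remarks~\ref{rem:impp2} and~\ref{rem:impp}) applies to strips $\{\lambda_1\xi-\lambda_2\eta\in\omega\}$ only when $\lambda_1\neq\lambda_2$; the argument for the $\ell^2$-valued size estimate (Theorem~\ref{thm:size2}) breaks down precisely on the line $\theta\parallel(1,-1)$. Your plan asserts the square-function bound for every $\theta$ and therefore misses this.

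The paper avoids both difficulties by a simpler organization. Writing $m=\sigma$ along $\theta$ and $m_p=\chi(\cdot-p)m$, it groups the pieces by size via level sets $D_k=\{p:\int_{p-1}^{p+1}(|m|+|m'|)\sim 2^k\}$, so that $\sum_k 2^{ks}(\#D_k)\lesssim\|m\|_{W^{1,s}}^s$. A pointwise Cauchy--Schwarz over $p\in D_k$ against the finite set $D_k$ gives a factor $(\#D_k)^{1/2}$ times the smooth square function, to which Theorem~\ref{thm:principa} (with Remark~\ref{rem:impp2}) applies directly with constant $2^k$; the sum $\sum_k(\#D_k)^{1/2}2^k$ converges by Cauchy--Schwarz and $s<2$. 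No Fourier expansion and no decoupling is needed. The remaining degenerate direction is then reached by duality: the adjoints $T_m^{*1}$, $T_m^{*2}$ correspond to symbols in $W^{1,s}_{\theta^{*i}}(BS^0_{0,0})$ for different angles $\theta^{*i}$, so the forbidden direction for $T_m$ is an allowed direction for one of its adjoints, and the local-$L^2$ range is self-dual.
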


\gb The current paper is organized as follows.

\section{Notation}

For all $m \in \R$ and $p\in(1,\infty)$, we set $W^{m,p}(\R)$ for the fractional Sobolev space on $\R$, defined as the set of distributions $f\in\s'(\R)$ such that $J_m(f) \in L^{p}(\R)$, where $J_m:=\left(Id-\Delta \right)^{m/2}$. \\
For a function $f\in \s(\R)$, the Fourier transform of $f$ at the point $\xi$ is defined by
$$\widehat f(\xi)=\int_{\R} f(x) e^{-ix\cdot \xi}\,dx.$$
With this definition, the inverse Fourier transform is given by
$f^{\vee} (\xi)=(2\pi)^{-1}\widehat f(-\xi)$.

For a bounded symbol $\sigma$, the bilinear operator
$$ T_\sigma(f,g)(x)= \int_{\R^2} e^{ix(\xi+\eta)} \widehat{f}(\xi) \widehat{g}(\eta) \sigma(x,\xi,\eta) \, d\xi d\eta$$
is well-defined and gives a bounded function for each pair of functions $f$, $g$ in $\s(\R)$. Moreover for
for bounded symbols $\sigma$, the operator $T_\sigma$ maps $\s(\R) \times \s(\R)$ into $\s'(\R)$ continuously. This justifies many
limiting arguments and computations that we will perform without further comment. \\
The formal transposes, $T^{*1}$ and  $T^{*2}$ of an operator $T:\s(\R) \times \s(\R) \to \s'(\R)$ are defined by
$$\langle T^{*1}(h,g), f \rangle = \langle T(f,g), h\rangle = \langle T^{*2}(f,h), g \rangle,$$
where $\langle \cdot , \cdot \rangle$ is the usual pairing between distributions and test functions.

For an interval $\omega$ of the real line, we write $\chi_\omega$ for a ``smooth scaled version'' of the characteristic function ${\bf 1}_{\omega}$. It  means that $\chi_\omega$ is a smooth function with support contained in $\omega$ and for every integer $i\in \N$, the following estimate holds
$$ \left\| D^{(i)} \chi_\omega \right\|_{L^\infty(\R)} \leq C_i |\omega|^{-i}.$$
To $\Omega:=(\omega)_{\omega\in\Omega}$ a collection of intervals, we associate the following bilinear square operator~:
$$ S_\Omega := (f,g) \rightarrow \left(\sum_{\omega\in\Omega} \left| T_{\chi_{\omega}}(f,g)\right|^2 \right)^{1/2}.$$
This can be considered as a smooth version of the more singular bilinear operator
$$(f,g) \rightarrow \left(\sum_{\omega\in\Omega} \left| \pi_{\omega}(f,g)\right|^2 \right)^{1/2}.$$

\begin{definition} \label{def:wd} A collection $\Omega:=(\omega)$ of intervals is said to be {\it well-distributed} if
\be{wd} \sum_{\omega \in \Omega} {\bf 1}_{2 \omega} \lesssim 1. \ee
\end{definition}


\part{Boundedness of bilinear square functions}

This part is devoted to the proof of Theorem \ref{thm:principa}. First we deal with the limiting case when $r=2$.

\begin{proposition} \label{prop:lacey} For $p,q\in [2,\infty]$ satisfying
$$ \frac{1}{2}=\frac{1}{p}+\frac{1}{q},$$
there exists a constant $C$ such that for every well-distributed collection $\Omega$ satisfying (\ref{assumption}), we have
$$ \left\| S_\Omega(f,g) \right\|_{L^2(\R)} \leq C\|f\|_{L^p(\R)} \|g\|_{L^q(\R)}.$$
\end{proposition}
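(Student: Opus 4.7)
The plan is to use Plancherel in the output to convert the $\ell^2$-sum to a sum of $L^2$ norms, and then to recognize the remaining sum over $\omega$ as a Fourier-transform sampling problem at a common scale $\ell$ supplied by assumption (\ref{assumption}). First, Plancherel gives $\|S_\Omega(f,g)\|_{L^2(\R)}^2 = \sum_\omega \|T_{\chi_\omega}(f,g)\|_{L^2(\R)}^2$. Passing to the spatial side via the bilinear-convolution representation
$T_{\chi_\omega}(f,g)(x) = 2\pi \int_\R \widehat{\chi_\omega}(t)\, f(x+t)\, g(x-t)\, dt$,
and using (\ref{assumption}) to write $\chi_\omega(\xi) = \chi\bigl((\xi-c_\omega)/\ell\bigr)$ for a fixed model bump $\chi$ and a common scale $\ell \simeq |\omega|$, one has $\widehat{\chi_\omega}(t) = \ell\, e^{-ic_\omega t}\, \widehat{\chi}(\ell t)$. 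Setting $G_x(t) := \widehat{\chi}(\ell t)\, f(x+t)\, g(x-t)$, the operator takes the clean form $T_{\chi_\omega}(f,g)(x) = 2\pi\ell\, \widehat{G_x}(c_\omega)$.

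Next, well-distributedness combined with $|\omega|\simeq\ell$ forces the centers $\{c_\omega\}$ to be $\ell$-separated in $\R$. Since $\widehat{\chi}\in\mathcal{S}(\R)$, the function $G_x$ is essentially supported in $|t|\lesssim 1/\ell$, so a Plancherel-Polya style sampling inequality at this matched scale should yield, pointwise in $x$,
$\sum_\omega |\widehat{G_x}(c_\omega)|^2 \lesssim \frac{1}{\ell}\int_\R |f(x+t)|^2 |g(x-t)|^2\,(1+|\ell t|)^{-N}\, dt$
for some large $N$. I plan to make this rigorous by decomposing $G_x$ through a smooth partition of unity $\{\psi_n\}$ at scale $1/\ell$ into compactly supported pieces, applying the classical Plancherel-Polya / Bessel inequality for the exponentials $\{e^{-itc_\omega}\}$ to each piece, and then absorbing the resulting weighted sum over $n$ using the Schwartz decay of $\widehat{\chi}$ and a Cauchy-Schwarz with weights $(1+|n|)^{\pm M}$. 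Multiplying by $(2\pi\ell)^2$ then delivers the pointwise control
$\sum_\omega |T_{\chi_\omega}(f,g)(x)|^2 \lesssim \ell \int_\R |f(x+t)|^2 |g(x-t)|^2\,(1+|\ell t|)^{-N}\, dt$.

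To finish, integrate in $x$ and apply Fubini. Because the exponents satisfy $\tfrac{2}{p}+\tfrac{2}{q}=1$, H\"older's inequality with conjugate exponents $p/2$ and $q/2$ controls the inner integral by $\|f\|_{L^p(\R)}^2 \|g\|_{L^q(\R)}^2$ for each fixed $t$, and the remaining $t$-integral becomes $\int_\R \ell(1+|\ell t|)^{-N}\, dt = \int_\R (1+|u|)^{-N}\, du$ after substituting $u=\ell t$, which is finite for $N>1$ and absorbs the factor $\ell$ entirely. This yields the desired $L^p \times L^q \to L^2$ estimate with a constant independent of $\Omega$. The main obstacle I anticipate is the Plancherel-Polya step itself: $G_x$ is not compactly supported, so its tails must be tracked carefully against the Schwartz decay of $\widehat{\chi}$ through the partition-of-unity decomposition. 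The uniform-length hypothesis (\ref{assumption}) is essential precisely because it furnishes a single characteristic scale $\ell$ against which to run the sampling argument; without it, the present approach does not directly go through.
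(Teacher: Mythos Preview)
Your argument is correct and gives a clean, self-contained proof. The paper does not actually prove this proposition: it defers to Lacey's Theorem~1.1 in \cite{lacey2} for the model case $\Omega=([n,n+1])_{n\in\Z}$ (noting that Lacey's argument bounds each $\|T_{\chi_{[n,n+1]}}(f,g)\|_{L^2}$ uniformly in the choice of bump and then sums), observes that an adapted bump for any $J_n\subset[n,n+1]$ with $|J_n|\simeq 1$ is also adapted to $[n,n+1]$, and finishes by dilation invariance. Your route is genuinely different: the kernel representation together with the $\ell$-separated Bessel/Plancherel--Polya inequality yields the pointwise bound $\sum_\omega |T_{\chi_\omega}(f,g)(x)|^2 \lesssim \ell \int (1+|\ell t|)^{-N}|f(x+t)\,g(x-t)|^2\,dt$ directly, after which H\"older and Fubini finish the job. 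This buys you a fully elementary argument with no black-box citation; the paper's route is shorter on the page but hides the analysis inside \cite{lacey2}. One small point worth making explicit: you wrote $\chi_\omega(\xi)=\chi((\xi-c_\omega)/\ell)$ for a \emph{single} template $\chi$, whereas the definition of $S_\Omega$ allows each $\chi_\omega$ to be its own adapted bump (and the $|\omega|$ need only be comparable, not equal). Your sampling step still goes through in that generality --- the family $\{t\mapsto e^{ic_\omega t}\overline{\widehat{\chi_\omega}(t)}\}_\omega$ is almost-orthogonal in $L^2$ by integration by parts and the Schur test, with constants depending only on the bump bounds --- but it would be better to say so rather than silently assume a common template.
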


\mb We do not write the proof and refer the reader to Theorem 1.1 in \cite{lacey2}. Indeed in \cite{lacey2}, M. Lacey dealt with the particular case where $\Omega:=([n,n+1])_{n\in\Z}$ and $\chi_{\omega}$ are obtained by translations. However, we point out that he obtained an estimate of
$$ \left\| T_{\chi_{[n,n+1]}}(f,g)\right\|_{L^2(\R)}$$
(uniformly with respect to the symbol employed for $\chi_{[n,n+1]}$), see inequality (3.6) in \cite{lacey2}. Then, he summed up all these estimates for $n\in\Z$ and obtained the desired inequalities. So in the proof of M. Lacey, we can use different symbols $\chi_{[n,n+1]}$. Proposition \ref{prop:lacey} for the case $\Omega:=([n,n+1])_{n\in\Z}$ is proved in this way. Then we extend it to a general collection of intervals by remarking that if $\chi_{J_n}$ is a smooth scaled version of ${\bf 1}_{J_n}$ with $J_{n}\subset [n,n+1]$ and $|J_n|\simeq 1$, then it is also a smooth scaled version of ${\bf 1}_{[n,n+1]}$. Hence, we obtain Proposition \ref{prop:lacey} for any such collection of intervals. Since exponents satisfy H\"older relation, using the invariance under dilations, we conclude to the whole Proposition \ref{prop:lacey}.

\gb Moreover, using invariance under dilation we can replace Assumption (\ref{assumption}) by the following one
\begin{equation} \label{assumption2}
  \frac{1}{10} \leq \inf_{\omega \in \Omega} |\omega| \leq  \sup_{\omega \in \Omega} |\omega| \leq 10,
\end{equation}
where the scale is fixed.

\gb As a consequence of this, in order to prove Theorem \ref{thm:principa} it suffices to prove the following~:

\begin{theorem} \label{thm:principal} For $p,q,r'\in (2,\infty)$ satisfying
$$\frac{1}{r}=\frac{1}{p}+\frac{1}{q},$$
there exists a constant $C$ such that for every well-distributed collection $\Omega$ satisfying (\ref{assumption2}), we have
$$ \left\| S_\Omega(f,g) \right\|_{L^r(\R)} \leq C\|f\|_{L^p(\R)} \|g\|_{L^q(\R)}.$$
\end{theorem}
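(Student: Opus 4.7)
The natural first step is to linearize the square function using Rademacher functions $\{\epsilon_\omega(t)\}_{\omega\in\Omega}$ on $[0,1]$. Khintchine's inequality applied pointwise in $x$ and then integrated over $\R$ yields
\begin{equation*}
\|S_\Omega(f,g)\|_{L^r(\R)}^r \;\simeq\; \int_0^1 \|T^t(f,g)\|_{L^r(\R)}^r\,dt,\qquad T^t(f,g):=\sum_{\omega\in\Omega}\epsilon_\omega(t)\,T_{\chi_\omega}(f,g),
\end{equation*}
so the problem reduces to establishing the uniform bilinear bound $\|T^t(f,g)\|_{L^r}\lesssim\|f\|_{L^p}\|g\|_{L^q}$ with a constant independent of $t\in[0,1]$ and of the collection $\Omega$.

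The operator $T^t$ is the bilinear Fourier multiplier with symbol $\sigma^t(\xi,\eta)=m^t(\xi-\eta)$ where $m^t(u):=\sum_\omega\epsilon_\omega(t)\chi_\omega(u)$. The well-distributedness of $\Omega$ combined with the unit-scale hypothesis (\ref{assumption2}) forces the cutoffs $\chi_\omega$ to have essentially disjoint supports at a common unit scale, so that $\|D^{(N)}m^t\|_{L^\infty(\R)}\lesssim_N 1$ uniformly in $t$ and $\Omega$. The singular direction of $\sigma^t$, namely the line $\xi-\eta=0$, avoids the three forbidden directions $\{\xi=0\}$, $\{\eta=0\}$, $\{\xi+\eta=0\}$ associated with the modulation symmetries of the bilinear Hilbert transform, which places $T^t$ in the class of bilinear multipliers treated by Gilbert-Nahmod and Lacey-Thiele.

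To bound $T^t$ uniformly I would carry out a wave-packet decomposition at unit time-frequency scale. Since every $|\omega|\simeq 1$, a single scale of tiles suffices: the packets are organized into trees determined by their location in the $\xi-\eta$ variable, and the random signs $\epsilon_\omega(t)$ amount to selecting a signed sub-family of tiles. Size and energy estimates of Lacey-Thiele type applied to the resulting model sum, together with Proposition~\ref{prop:lacey} (which anchors the analysis by providing quadratic $\ell^2$-orthogonality along the $\xi-\eta$ direction and matches the $r=2$ endpoint), should give $\|T^t(f,g)\|_{L^r}\lesssim\|f\|_{L^p}\|g\|_{L^q}$ throughout the local $L^2$-range $p,q,r'\in(2,\infty)$, uniformly in $t$ and $\Omega$. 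Averaging in $t$ via Khintchine then recovers the theorem.

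The main obstacle lies in the time-frequency analysis when $\Omega$ is merely well-distributed rather than arithmetically or dyadically structured: the orthogonality statements used for tree selection and energy estimation must be expressed purely through the bounded-overlap condition $\sum_\omega \mathbf{1}_{2\omega}\lesssim 1$ of Definition~\ref{def:wd}, and the resulting bounds must be verified to be uniform in the random signs. The unit-scale assumption (\ref{assumption2}) is precisely what keeps the decomposition one-scale; without it, the wave packets would live at infinitely many scales and delicate scale-coupling arguments would be required, which is presumably why the authors retain assumption (\ref{assumption}) in Theorem~\ref{thm:principa}.
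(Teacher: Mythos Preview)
Your Khintchine linearization is a natural opening move, and it is exactly how the dyadic bilinear square function was handled in \cite{B,Diestel}. The gap lies in the next step. For a dyadic family $\psi_n(\xi)=\psi(2^{-n}\xi)$ the randomized symbol $m^t(u)=\sum_n\epsilon_n(t)\psi(2^{-n}u)$ obeys genuine Mikhlin estimates $|D^{(k)}m^t(u)|\lesssim|u|^{-k}$, and it is precisely this decay that places the linearized operator inside the Gilbert--Nahmod framework. Under assumption (\ref{assumption2}) your symbol $m^t(u)=\sum_\omega\epsilon_\omega(t)\chi_\omega(u)$ is only uniformly $C^\infty_b$: one has $\|D^{(k)}m^t\|_{L^\infty}\lesssim 1$ with no decay whatsoever in $|u|$. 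Such symbols are \emph{not} covered by \cite{GN,GN2,LT1,LT4}; they are one-variable specializations of the exotic class $BS^0_{0,0}$, for which the introduction of this very paper recalls that no off-the-shelf boundedness theorem exists. The non-degeneracy of the line $\xi-\eta=0$ is not the obstruction; the absence of Mikhlin decay is.

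Your fallback plan---a direct single-scale wave-packet analysis of $T^t$---is the correct instinct, but executing it \emph{is} the content of the paper. The authors do not linearize; they dualize the square function against an $\ell^2$-valued sequence $h=(h_n)_n$ to obtain the trilinear model form (\ref{eq:tril}), and then build a machinery of \emph{vectorized} tri-tiles with adapted sizes and energies (Definitions \ref{def:tvect}--\ref{def:energysequence}). The decisive size bounds (Theorems \ref{thm:size1} and \ref{thm:size2}) are proved by feeding the collection $\Omega$ into Rubio de Francia's linear Littlewood--Paley inequality; this is where well-distributedness and the local-$L^2$ restriction genuinely enter, and it is the ingredient your sketch does not supply. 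Note also that Proposition~\ref{prop:lacey} bounds $\|S_\Omega(f,g)\|_{L^2}$, which through Khintchine controls only $\int_0^1\|T^t(f,g)\|_{L^2}^2\,dt$, not $\sup_t\|T^t(f,g)\|_{L^2}$; it therefore cannot anchor a uniform-in-$t$ argument as you suggest. In short, the Rademacher trick here relabels the problem without reducing it: after linearization you still owe a unit-scale time-frequency analysis equivalent to that of Sections~3 and~4.
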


\begin{remark} \label{rem:impp2} Let us point out two improvements.
\begin{itemize}
\item The proof is based on a time-frequency decomposition where the functions $f,g$ and the symbols will be decomposed with  wavelets. So we can replace the symbol $\chi_\omega (\eta-\xi)$ (defining the bilinear multiplier operator $\pi_\omega$) by any symbol $m_\omega(\eta,\xi)$ supported in $\{\eta-\xi\in \omega\}$ and such that
$$ \left\| \partial_{(\xi,\eta)}^\alpha m_\omega\right\|_{L^\infty} \leq C_\alpha$$
for all multi-index $\alpha$.

\item Moreover, we let the line $\eta-\xi=0$ play a role in the singularity of the  symbols considered. Indeed we can consider other lines. We refer the reader to Remark \ref{rem:impp} for the fact that we can consider the lines
$$\{ (\xi,\eta),\ \lambda_1\xi+\lambda_2 \eta=0\}$$
as soon as $\lambda_1\neq \lambda_2$.

\end{itemize}

\end{remark}

\mb
The next two sections are devoted to the proof of Theorem~\ref{thm:principal}.

\section{Reduction to the study of combinatorial model sums} \label{section1}

First, we ``regularize'' the collection $\Omega$ in the following sense~:

\begin{lemma} \label{lem:wdplus} To prove Theorem \ref{thm:principal}, we can assume that the collection $\Omega$ satisfies
\be{wdplus} \sum_{\omega \in \Omega} {\bf 1}_{\kappa \omega} \lesssim 1  \ee
for a large enough parameter $\kappa \geq 2$.
\end{lemma}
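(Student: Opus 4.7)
The plan is to reduce to the stronger condition by a finite coloring argument. First I would combine Assumption (\ref{assumption2}) (all intervals have length comparable to $1$) with the original well-distributed condition (\ref{wd}) to get a bounded local density: for any interval $I\subset\R$ of length $L\geq 1$, the number of $\omega\in\Omega$ with $\omega\cap I\neq\emptyset$ is $O(L)$. Indeed, such an $\omega$ is contained in a fixed dilate of $I$, and integrating $\sum_{\omega}{\bf 1}_{2\omega}\lesssim 1$ over this dilate gives the bound, using $|\omega|\simeq 1$.

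Next, form the graph $G$ on the vertex set $\Omega$ by placing an edge between $\omega$ and $\omega'$ whenever $\kappa\omega\cap\kappa\omega'\neq\emptyset$. By the density estimate above, each vertex has at most $C\kappa$ neighbours, for an absolute constant $C$. A greedy coloring therefore yields a proper coloring of $G$ with at most $N\leq C\kappa+1$ colors, hence a partition
$$\Omega \;=\; \Omega_1\sqcup \Omega_2 \sqcup \cdots \sqcup \Omega_N.$$
By construction, for each $i$ the dilated intervals $\{\kappa\omega : \omega\in\Omega_i\}$ are pairwise disjoint, so $\sum_{\omega\in\Omega_i}{\bf 1}_{\kappa\omega}\leq 1$, which is stronger than (\ref{wdplus}). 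Each $\Omega_i$ also inherits Assumption (\ref{assumption2}).

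Finally, the square function decomposes additively at the level of squares,
$$S_\Omega(f,g)^2 \;=\; \sum_{i=1}^N S_{\Omega_i}(f,g)^2,$$
and therefore pointwise $S_\Omega(f,g)\leq \sum_{i=1}^N S_{\Omega_i}(f,g)$. Assuming Theorem \ref{thm:principal} for every collection satisfying (\ref{wdplus}) and (\ref{assumption2}), we may apply it to each $\Omega_i$ and sum the resulting $L^r$ estimates, which yields Theorem \ref{thm:principal} for $\Omega$ with constant multiplied by $N=O(\kappa)$.

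There is no substantial obstacle here: the whole content lies in the bounded local density observation, which is essentially immediate from (\ref{wd}) and (\ref{assumption2}). Since the statement of Theorem \ref{thm:principal} only asks for a finite constant (which is allowed to depend on $\kappa$ at this reduction stage), the blow-up of the constant by a factor $N$ is harmless.
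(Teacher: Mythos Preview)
Your argument is correct and reaches the same conclusion, but by a genuinely different route from the paper's proof.

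The paper does not partition the index set $\Omega$; instead it subdivides each interval. For a fixed $\omega$ it builds a finite overlapping cover $\omega_{-1},\dots,\omega_{N-1}$ with $|\omega_i|=\tfrac{2}{N}|\omega|$ and a subordinate partition of unity $(\chi_{\omega,i})_i$, so that $|T_{\chi_\omega}(f,g)|\leq\sum_i |T_{\chi_\omega\chi_{\omega,i}}(f,g)|$. Choosing $N\geq 4\kappa$ forces $\kappa\omega_i\subset 2\omega$; hence each new collection $\Omega_i:=(\omega_i)_{\omega\in\Omega}$ inherits (\ref{wdplus}) \emph{directly} from the original bound $\sum_\omega {\bf 1}_{2\omega}\lesssim 1$, with no combinatorics needed. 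Your approach, by contrast, leaves the intervals and their cutoffs untouched and instead extracts from (\ref{wd}) and (\ref{assumption2}) a bounded local density, which feeds a degree bound and a greedy coloring of the intersection graph of the $\kappa$-dilates. What your argument buys is cleanliness: no new symbols, no change of scale, and (\ref{assumption2}) is preserved verbatim for each color class. What the paper's subdivision buys is that the key inclusion $\kappa\omega_i\subset 2\omega$ is purely geometric and does not rely on the density lemma (indeed it does not use (\ref{assumption2}) at this step). Both produce $O(\kappa)$ subcollections and the same harmless loss in the constant.
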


\begin{proof} Let $\Omega$ be a well-distributed collection of intervals and $\kappa>2$ be fixed. For each $\omega \in \Omega$, we will consider a ``finite Whitney covering'', built as follows.
For $\omega=[c(\omega)-\frac{|\omega|}{2}, c(\omega)+\frac{|\omega|}{2}]$, we define for $i=-1,..,N-1$
$$ \omega_{i}:= c(\omega)-\frac{|\omega|}{2} + |\omega| [\frac{i}{N},\frac{i+2}{N}].$$
So we have a covering $\omega \subset \bigcup_{i=-1}^{N-1} \omega_i
$. We associate a partition of the unity consisting of smooth
functions $\chi_{\omega,i}$ supported in $\omega_{i}$ such that
\begin{itemize}
 \item $\sum\limits_{i=-1}^{N-1} \chi_{\omega,i} = 1$ on $\omega$
 \item For all $i$ and for all $n$, we have
$$ \left\| D^{(n)} \chi_{\omega,i} \right\|_{L^\infty} \lesssim |\omega_i|^{-n} \lesssim |\omega|^{-n}.$$
\end{itemize}
The last implicit constant depends on $N$, but $N$ will be later fixed by $\kappa$.  \\
So we have the following decomposition~:
$$ \left|T_{\chi_\omega}(f,g)\right| \leq \sum_{i=-1}^{N-1}  \left|T_{\chi_\omega \chi_{\omega,i}} (f,g)\right|.$$
The symbol $\chi_\omega \chi_{\omega,i}$ is a smooth cutoff relative to the subinterval $\omega_{i}$. So it remains us to check that for all $i\in\{-1,...,N-1\}$, the collection $\Omega_i:=(\omega_i)_{\omega\in \Omega}$ satisfies (\ref{wdplus}) for some integer $N$. Then, we estimate the initial operator $S_\Omega$ by the sum of $N+1$ operators $S_{\Omega_i}$ associated with collections $\Omega_i$ verifying (\ref{wdplus}). Since $N$ will be finite and chosen with respect to $\kappa$, we can conclude the proof of the Lemma. \\
We fix an index $i\in\{-1,...,N-1\}$ and study the collection $\Omega_i$. We set $N\geq 4\kappa$. First, note that we have $|\omega_i|=\frac{2}{N}|\omega|$ so $$ |\kappa \omega_i|=\frac{2\kappa }{N}|\omega| \leq \frac{1}{2}|\omega|.$$
In addition,
$$c(\omega_i)=c(\omega)-\frac{|\omega|}{2} + |\omega|\frac{i+1}{N} \in \omega.$$
Consequently, $ \kappa \omega_{i} \subset 2\omega$. Since the collection $\Omega$ is well-distributed, we conclude that the collection $\Omega_i$ satisfies (\ref{wdplus}).
\end{proof}

\mb From now on, we only consider a collection $\Omega$
satisfying (\ref{assumption2}) and (\ref{wdplus}) for some large
enough $\kappa$ (which is sufficient due to the previous lemma).

\mb  Since our bilinear operators have modulation invariance
property, to prove Theorem \ref{thm:principal} we require the
``standard'' time-frequency analysis used for the study of bilinear
operators, such as the bilinear Hilbert transform. We have to
decompose both in the frequency and in the physical space with the
notions of ``tiles'' and ``tri-tiles''.

\mb For each interval $\omega\in\Omega$, we use the duality and introduce a trilinear form as follows~: for all functions $f,g,h\in\s(\R)$
\be{eqqq} \langle T_{\chi_\omega}(f,g), h \rangle = \int_{\xi_1+\xi_2+\xi_3=0} \widehat{f}(\xi_1) \widehat{g}(\xi_2) \widehat{h}(\xi_3) \chi_\omega(\xi_2-\xi_1) d\xi. \ee


\mb We recall the notion of {\it tiles} and {\it tri-tiles} (see for example \cite{MTT3b, MTT3}) and adapt the definition to our setting.

\begin{definition} A {\it tile} is a rectangle (i.e. a product of two intervals) $I\times \omega$ of area one with $1/10\leq |I|\leq 10$. A {\it tri-tile} $s$ is a
rectangle $s= I_s \times \omega_s$, which contains three tiles $s_{i}=I_{s_i} \times \omega_{s_i}$ for $i=1,2,3$ such that
\be{conddef} \forall i\in\{1,2,3\}, \qquad I_{s_i}=I_s,\ee
 \be{Qn} 0\in \omega_{s_1} + \omega_{s_2} + \omega_{s_3},  \ee
and such that there is one (and only one) interval $\omega\in \Omega$ satisfying
 \be{Qn2} \forall\, (\xi_1,\xi_2) \in \omega_{s_1} \times \omega_{s_2}, \qquad  \xi_2-\xi_1 \in \omega \ee
and $|I_s| |\omega| \simeq 1$. Let us enumerate the collection
$\Omega:=(\omega_n)_{n\geq 0}$. Then for all $n\geq 0$ and $\OQ$ a
collection of tri-tiles; we will denote by $\OQ_n$ the sub-collection
of tri-tiles verifying (\ref{Qn2}) for the interval
$\omega=\omega_n$.
\end{definition}

\mb The condition (\ref{Qn}) describes the fact that the cube
$\omega_{s_1} \times \omega_{s_2} \times \omega_{s_3}$ meets the
plane $\{(\xi_1,\xi_2,\xi_3), \xi_1+\xi_2+\xi_3=0\}$ (over which the
integral in (\ref{eqqq}) is computed). If a cube does not
intersect this plane, then the corresponding tri-tile does not play
a role and so need not be considered. Moreover, we emphasize that as
the bilinear symbol is supposed to be smooth ``at the scale $1$''
(since all the intervals $\omega \in\Omega$ have a length equivalent
to $1$ due to Assumption (\ref{assumption2})), we need to deal with only those tiles whose
space and frequency intervals have lengths equivalent to $1$.\\
We can cover the time-frequency space
$$\R \times \left( \bigcup_{\omega \in\Omega} \{(\xi,\eta), \xi-\eta \in \omega\} \right) $$
where the square function $S_{\Omega}$ plays a role by tri-tiles.

\mb We recall the concept of {\it grid} and {\it collection} of tri-tiles~:

\begin{definition} \label{def:grid}
A collection ${\mathcal I}:= \{I\}_{I\in {\mathcal I}}$ of real intervals is called a {\it grid} if for all $k\in \Z$
\be{grid} \sum_{\genfrac{}{}{0pt}{}{I\in {\mathcal I}}{2^{k-1}\leq |I|\leq 2^{k+1}}} {\bf 1}_{I} \leq C_0{\bf 1}_{\R}, \ee
where $C_0$ is a large enough numerical constant independent of $k$ and the collection ${\mathcal I}$.
\end{definition}
The classical dyadic grid is {\it a grid} in the previous sense. Moreover, a {\it grid} has a similar structure as the dyadic grid, i.e. at each scale $2^{k}$, the collection of the intervals (whose the length is equivalent to $2^k$) of the {\it grid} is a bounded covering of $\R$.

\mb We remember the notion of {\it collection of tri-tiles} (see for example Definition 2.8 in \cite{lacey})
\begin{definition} \label{def:coll}
Let $\OQ$ be a set of tri-tiles. It is called a {\it collection of tri-tiles} if
\begin{itemize}
 \item $\left\{ I_s,\ s\in \OQ\right\} \textrm{  is a grid,}$
 \item ${\mathcal J}:=\left\{ \omega_s,\ s\in \OQ \right\} \cup \left(\bigcup_{i=1}^{3} \left\{ \omega_{s_i},\ s\in \OQ \right\}\right) \textrm{  is a grid,} $
 \item for all $\omega' \in {\mathcal J}$ and $s\in\OQ$
$$ \exists i\in\{1,2,3\},\ \ \omega_{s_i} \subsetneq \omega' \in {\mathcal J} \Longrightarrow \forall\, j\in\{1,2,3\},\ \omega_{s_j} \subset \omega'. $$
\end{itemize}
\end{definition}
The last point corresponds to a ``sparseness'' assumption about the collection ${\mathcal J}$ (including all the frequency intervals of the tri-tiles), We refer to Definition 2.1 of \cite{mtt} for more details about the notion of ``sparseness''.


\mb Now we define the wave packet for a tile.

\begin{definition} \label{def:wavepacket} Let $P=I\times \omega$ be a tile. A wave packet associated with the tile $P$ is a smooth function $\Phi_P$ satisfying
\begin{itemize}
\item Fourier support of $\Phi_P$ is contained in $ \frac{9}{10} \omega$.
\item For all indices $i\in\N$ and all $M>0$, following estimate holds
$$ \left|D^{(i)}\left[e^{ic(\omega).}\Phi_{P}\right](x)\right| \lesssim |I|^{-1/2-i} \left(1+\frac{|x-c(I)|}{|I|} \right)^{-M},$$
where $c(I)$ denotes the center of the interval $I$ and  the implicit constant depends on exponents $M$.
\end{itemize}
So, $\Phi_{P}$ is a normalized function in $L^2(\R)$, concentrated
in space around $I$ and its spectrum is exactly contained in $\omega$.
\end{definition}

\begin{remark} 
As an instance of the Heisenberg uncertainty principle, the normalization $|I||\omega| \simeq 1$ for a tile $P=I \times \omega$ is chosen. This ensures that wave packets on $P$ exist. We know that we cannot choose a function $\Phi_P$
which is perfectly localized in $P$, i.e. localized in both the time and the
frequency space simultaneously (which is the case in the Walsh model, see \cite{MTT3b}). However we
just require some decay for the function $\Phi_P$ around the physical interval $I$, but it raises some technical difficulties (see \cite{MTT3}).
\end{remark}

\mb Since in Theorem \ref{thm:principal} we are only
considering exponents strictly bigger than 1, we can use duality. Hence in order to prove Theorem \ref{thm:principal} it
suffices to obtain bounds for the following trilinear form \be{eq:trilinear}
(f,g,h) \rightarrow \sum_{n} \int T_{\chi_{\omega_n}}(f,g)(x) h_n(x)
dx \ee
for $f\in L^p(\R)$, $g\in L^q(\R)$ and $h=(h_n)_n$ a sequence belonging to $L^{r'}(\R,l^2(\N))$. \\

Then the first reduction is to pass from the ``continuous'' operators $T_{\chi_{\omega_n}}$ to discrete variants involving sums of inner products with the wave packets (introduced in Definition \ref{def:wavepacket}). This step is standard and appears essential in order for the phase plane combinatorics to work correctly. This reduction could be performed by decomposing functions $f,g,h_n$ with wavelets (in terms of Fourier series on some intervals scaled with respect to the strips associated with the frequency intervals $\omega\in\Omega$). We refer the reader to \cite{BG1} and \cite{BG2} for a precise explanation of such reduction. Consequently Theorem \ref{thm:principal} can be reduced to the following one about model sum operators. Indeed, following the ideas of \cite{BG1} and \cite{BG2}, the trilinear form (\ref{eq:trilinear}) is equal to an absolutely convergent sum of disretized trilinear forms $\Lambda_{\OQ}$ (see (\ref{eq:tril})) with different collections of tri-tiles $\OQ$ and collections of wave packets. Theorem \ref{thm:principal2} below gives a uniform bound for such model trilinear forms and hence similar estimates hold for (\ref{eq:trilinear}).

\begin{theorem}  \label{thm:principal2} Let $\OQ$ be a collection of tri-tiles. For functions $f,g\in\s(\R)$ and a sequence $h:=(h_n)_n$ of smooth functions, consider the following trilinear form~:
\begin{align}
 \Lambda_\OQ(f,g,h) & := \sum_{n\geq 0} \sum_{s\in \OQ_n} |I_s|^{-1/2} \left|\langle f,\Phi_{s_1}\rangle
\langle g,\Phi_{s_2}\rangle \langle h_n,\Phi_{s_3}\rangle\right|. \label{eq:tril}
\end{align}
Then for $2<p,q,r'<\infty$ satisfying
 $$\frac{1}{r}=\frac{1}{p}+\frac{1}{q},$$
there is a constant $C$ (depending only on exponents and implicit constants appearing in Definitions \ref{def:grid} and \ref{def:wavepacket}) such that for every collection of tri-tiles $\OQ$,
$$ \left|\Lambda_\OQ(f,g,h)\right| \leq C \|f\|_{L^p(\R)}  \|g\|_{L^q(\R)} \left\| \left( \sum_{n\geq 0} |h_n|^2 \right)^{1/2} \right\|_{L^{r'}(\R)}. $$
\end{theorem}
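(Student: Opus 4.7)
My approach follows the time-frequency paradigm of Lacey-Thiele and Muscalu-Tao-Thiele, adapted to accommodate the $\ell^2$-valued third leg. The first reduction is to a restricted weak-type bound: by multilinear Marcinkiewicz interpolation (valid since $p,q,r'\in(2,\infty)$), it suffices, for each triple of measurable sets $E_1,E_2,E_3\subset\R$ of finite measure, to construct a major subset $E'_3\subset E_3$ (chosen by removing an exceptional set where a suitable bilinear maximal function of ${\bf 1}_{E_1},{\bf 1}_{E_2}$ is large) and to prove
$$|\Lambda_\OQ(f,g,h)|\lesssim |E_1|^{1/p}|E_2|^{1/q}|E_3|^{1/r'}$$
uniformly for $|f|\leq {\bf 1}_{E_1}$, $|g|\leq {\bf 1}_{E_2}$ and $(\sum_n|h_n|^2)^{1/2}\leq {\bf 1}_{E'_3}$. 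Because of the invariance of Definition \ref{def:coll} under rescaling, I may normalize $|E_3|=1$.

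The core apparatus is the standard notion of \emph{tree} in $\OQ$ (a collection with a common top tile) and associated \emph{sizes} and \emph{energies}. For $j=1,2$ I use the scalar size
$$\textrm{size}_j(\OQ'):=\sup_{T\subset \OQ'\text{ tree}}\Bigl(|I_T|^{-1}\sum_{s\in T}|\langle u_j,\Phi_{s_j}\rangle|^2\Bigr)^{1/2},\qquad u_1=f,\ u_2=g,$$
while for the third coordinate I use the vector size adapted to the partition $\OQ=\bigsqcup_n \OQ_n$,
$$\textrm{size}_3(\OQ'):=\sup_{T\subset \OQ'\text{ tree}}\Bigl(|I_T|^{-1}\sum_{n\geq 0}\sum_{s\in T\cap \OQ_n}|\langle h_n,\Phi_{s_3}\rangle|^2\Bigr)^{1/2},$$
which treats $(h_n)_n$ as a single vector-valued test function and will match the $L^{r'}(\R;\ell^2(\N))$ norm appearing on the right-hand side. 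Energies are defined as analogous suprema but over collections of mutually disjoint trees, so that a Bessel-type inequality becomes accessible.

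The proof then proceeds along the classical two-step scheme. First I establish a \emph{tree estimate}
$$\Lambda_{T}(f,g,h)\lesssim \textrm{size}_1(T)\,\textrm{size}_2(T)\,\textrm{size}_3(T)\,|I_T|$$
for every tree $T$, by decomposing $T$ into its three lacunary sub-trees (one per coordinate $j$ with $\omega_{s_j}\subsetneq \omega_T$ for all $s$), applying John-Nirenberg to the overlapping coordinate, and using Cauchy-Schwarz in $n$ combined with the lacunary/BMO control for the other two; the $\ell^2$ sum over $n$ is precisely what the vector size $\textrm{size}_3$ is designed to absorb. Second, I run the standard selection algorithm: tri-tiles with size $\simeq 2^{-k}$ in coordinate $j$ are collected into trees whose tops have total measure bounded by $2^{2k}\textrm{energy}_j^2$, after which summing the tree estimates over dyadic size levels $k_1,k_2,k_3$ and optimizing yields the restricted weak-type bound. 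The energy estimates $\textrm{energy}_1\lesssim \|f\|_\infty |E_1|^{1/2}$, $\textrm{energy}_2\lesssim \|g\|_\infty |E_2|^{1/2}$ and $\textrm{energy}_3\lesssim \|(\sum_n|h_n|^2)^{1/2}\|_\infty$ close the argument, with the smallness of $|E_1|,|E_2|$ and the defining property of $E'_3$ absorbed through the exceptional set in the usual way. The main obstacle I expect is the vector-valued energy bound for $\textrm{energy}_3$, because orthogonality between the rank-one operators $\varphi\mapsto \langle \varphi,\Phi_{s_3}\rangle\Phi_{s_3}$ across different $n$ is not automatic; it will follow only by exploiting that the frequency intervals $\omega_{s_3}$ for $s\in\OQ_n$ sit in pairwise essentially disjoint regions as $n$ varies, a disjointness that is forced by condition (\ref{Qn2}) together with the strengthened well-distributed assumption (\ref{wdplus}) for a large enough $\kappa$ secured by Lemma \ref{lem:wdplus}. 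Without this geometric input the vector-valued Bessel inequality would pick up an extra factor growing with $\#\Omega$, and the uniformity in $\Omega$ would be lost.
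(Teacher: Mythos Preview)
Your proposal follows the standard Lacey--Thiele/Muscalu--Tao--Thiele scheme, but it overlooks a structural feature that the paper's argument is built around: in this problem \emph{every} tile sits at a single scale. By definition a tile $I\times\omega$ has $|I||\omega|=1$ with $\tfrac{1}{10}\le |I|\le 10$, so all spatial intervals and all frequency intervals have length $\simeq 1$ (this is forced by Assumption (\ref{assumption2})). Consequently the classical notion of tree --- a collection of tri-tiles with nested frequency intervals under a common top --- degenerates: there is no genuine nesting, no lacunary sub-trees, and nothing for a John--Nirenberg argument to iterate over. Your description of the tree estimate (``decomposing $T$ into its three lacunary sub-trees \dots\ applying John--Nirenberg to the overlapping coordinate'') therefore has no content in the situation at hand.

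The paper replaces trees by \emph{vectorized tri-tiles}: $\overrightarrow{s}^{\,l}=\{s'\in\OQ:\ s'_l=s_l\}$ collects all tri-tiles sharing the same $l$-th tile, and this is the correct ``single-scale tree''. The sizes $\overrightarrow{\textrm{size}}^{\,l}_j$ and energies are defined relative to these objects, and the analog of the tree estimate (Proposition \ref{prop:tree}) is a direct Cauchy--Schwarz bound, with no BMO/John--Nirenberg input. More importantly, to handle the $\ell^2$-valued third leg the paper needs the doubly indexed objects $\overrightarrow{s}^{\,j,l}=\bigcup_{t\in\overrightarrow{s}^{\,j}}\overrightarrow{t}^{\,l}$ (Definition \ref{def:tvect}); the vector size $\overrightarrow{\textrm{size}}^{\,j,l}_3$ and the selection algorithm (Proposition \ref{prop:algo1}) are formulated in terms of these. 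Without this second layer of vectorization, your $\textrm{size}_3$ would not mesh with the scalar sizes in the tri-tile estimate. Two smaller points: the exceptional-set/major-subset construction you invoke is unnecessary here, since all three exponents $p_1,p_2,p_3$ exceed $1$ (the paper uses plain restricted weak type, Definition \ref{restricteddef}); and the geometric difficulty you flag sits not in the energy bound for the third leg (Theorem \ref{thm:energy2} is an elementary Bessel inequality) but in the \emph{size} bound (Theorem \ref{thm:size2}), where the strengthened well-distributed condition (\ref{wdplus}) is actually used.
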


\mb We further reduce our problem using real interpolation. Since the range for exponents is an open set, the strong type estimates (corresponding to Theorem \ref{thm:principal2}) are implied by weak type estimates using multilinear Marcinkiewicz interpolation.

\begin{definition} \label{restricteddef} For  a measurable subset $E$ of $\R$, we write~:
$$F(E) := \left\{ f\in L^\infty(\R),\ \forall x\in \R,\ |f(x)|\leq {\bf 1}_{E}(x) \right\}.$$ Let $p_1,p_2,p_3$ be positive exponents.
We say that a trilinear form $\Lambda$ is {\it of weak type} $(p_1,p_2,p_3)$ if there exists
a constant $C$ such that for all measurable sets $E_1,E_2,E_3$ of finite measure and for all functions $f \in F(E_1)$, $g \in F(E_2)$ and sequences $h:=(h_n)_n$ with $\sum_{n\in\Z} |h_n|^2 \in F(E_3)$ we have
\be{typerestreint}
\left| \Lambda(f,g,h) \right| \leq C \prod_{i=1}^{3}
|E_i|^{1/p_i}. \ee The best constant in (\ref{typerestreint}) is called
the weak type bound for the trilinear form $\Lambda$ and will be denoted by $C(\Lambda)$.
\end{definition}

\mb With the help of Marcinkiewicz real interpolation theory applied to the bilinear square functions (see the work of L. Grafakos and T. Tao \cite{GTa} or Exercise 1.4.17 of \cite{Gra} for bilinear interpolation and the one of L. Grafakos and N. Kalton \cite{GK} for extension to sub-bilinear operators), weak type estimates for exponents varying in an open range yields strong type estimates. Thus, Theorem \ref{thm:principal2} is reduced to the following one (see Remark 8 of \cite{B} for similar arguments)~:

\begin{theorem} \label{thm:principal3}
Let $2< p_1,p_2,p'_3<\infty$ satisfy
$$ \frac{1}{p_1} + \frac{1}{p_2} + \frac{1}{p_3}=1.$$
Then the trilinear form $\Lambda_\OQ$ is of weak type $(p_1,p_2,p_3)$ uniformly with respect to any finite collection of tri-tiles $\OQ$ and associated collection of wave-packets.
\end{theorem}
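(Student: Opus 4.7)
The plan is to run the standard time-frequency machinery (trees, sizes, and energies) for tri-tiles, but adapted to the vector-valued third input $h=(h_n)_n$ so that an $\ell^2$-valued size takes the place of the scalar one. Fix measurable sets $E_1,E_2,E_3$ of finite measure, and by the scaling invariance of the statement we may normalize $|E_3|=1$. Take $f\in F(E_1)$, $g\in F(E_2)$ and $h$ with $\sum_n|h_n|^2\in F(E_3)$. The first step is to define an exceptional set
$$ E := \bigcup_{i=1}^{2}\Bigl\{ M\mathbf 1_{E_i} > C|E_i|\Bigr\} \;\cup\; \Bigl\{M\mathbf 1_{E_3}>C\Bigr\},$$
with $M$ the Hardy--Littlewood maximal operator and $C$ a large constant chosen so that $|E|\le\tfrac12|E_1|^{1/p_1}|E_2|^{1/p_2}$, and to discard all tri-tiles $s\in\OQ$ whose spatial interval $I_s$ is contained in $E$. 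The desired weak-type bound is then obtained against $\mathbf 1_{E_3\setminus E}$.

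Next I would introduce the three sizes on a sub-collection $\OQ'\subset\OQ$, with the first two the usual scalar ones
$$ \mathrm{size}_i(\OQ'):=\sup_{T\subset\OQ'}\Bigl(\tfrac1{|I_T|}\sum_{s\in T}|\langle \phi_i,\Phi_{s_i}\rangle|^2\Bigr)^{1/2},\qquad i=1,2,$$
(with $\phi_1=f$, $\phi_2=g$, the supremum running over $2$-trees), and the third one the $\ell^2$-valued variant
$$ \mathrm{size}_3(\OQ'):=\sup_{T\subset\OQ'}\Bigl(\tfrac1{|I_T|}\sum_{s\in T}\sum_{n}|\langle h_n,\Phi_{s_3}\rangle|^2\Bigr)^{1/2},$$
the sup being over $i$-trees in the appropriate sense. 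The corresponding energies are the usual suprema of sums of $L^2$-normalized coefficients over disjoint unions of trees, vector-valued for $i=3$. The key analytic inputs are~: (i) a John--Nirenberg type inequality, giving $\mathrm{size}_i(\OQ')\lesssim \sup_{s\in\OQ'}|I_s|^{-1}\int (M\mathbf 1_{E_i})^{p_i'}\cdots$, hence $\mathrm{size}_i(\OQ')\lesssim |E_i|/|I_s|$ for $i=1,2$ and $\mathrm{size}_3(\OQ')\lesssim 1/|I_s|$ for the surviving tiles (using the outside-$E$ condition); (ii) the energy estimates $\mathrm{energy}_i(\OQ)\lesssim |E_i|^{1/2}$, which for $i=3$ require a vector-valued Bessel-type argument on the sequence $(h_n)_n$.

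The single-tree estimate
$$ \Lambda_T(f,g,h)\lesssim |I_T|\,\prod_{i=1}^3\mathrm{size}_i(T)$$
follows from Cauchy--Schwarz on the three factors, controlled by the corresponding coefficient sums, together with the standard almost-orthogonality of wave packets inside a tree; for $i=3$ one pulls the $\ell^2$-norm inside first and applies Cauchy--Schwarz in $n$. Coupled with the organisational/selection lemma, which writes $\OQ=\bigsqcup_j \OQ_j$ with $\mathrm{size}_i(\OQ_j)\lesssim 2^{-j}\mathrm{energy}_i(\OQ)$ and total spatial mass $\sum_{T\in\OQ_j}|I_T|\lesssim 2^{2j}$, summing
$$ |\Lambda_\OQ|\lesssim \sum_{j_1,j_2,j_3}\min_i(2^{2j_i})\prod_{i}2^{-j_i}\mathrm{energy}_i(\OQ)$$
and interpolating the geometric series with exponents $\theta_i\in(0,1)$ chosen so that $\sum \theta_i=1$ and $2\theta_i<1$ (possible precisely when each $p_i>2$, which is our hypothesis), yields
$$ |\Lambda_\OQ(f,g,h)|\lesssim \prod_{i=1}^3\mathrm{energy}_i(\OQ)^{1-\theta_i}\bigl(\mathrm{size}_i^{\max}\bigr)^{\theta_i}\lesssim |E_1|^{1/p_1}|E_2|^{1/p_2}|E_3|^{1/p_3},$$
which is the required weak-type $(p_1,p_2,p_3)$ bound.

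The main obstacle, and the one point where this differs substantively from the scalar bilinear Hilbert transform argument, is item (ii) above for $i=3$: one has to prove the vector-valued energy estimate
$$ \Bigl(\sum_{T\in\mathcal T}|I_T|\,\mathrm{size}_3(T)^2\Bigr)^{1/2}\lesssim \Bigl\|\bigl(\sum_n|h_n|^2\bigr)^{1/2}\Bigr\|_{L^2}$$
for a family of pairwise disjoint trees $\mathcal T$ selected by a greedy algorithm. This requires a $TT^*$/almost-orthogonality argument applied simultaneously to all $n$, exploiting the sparseness clause in Definition~\ref{def:coll} so that the top frequency intervals of distinct trees are well-separated, together with the rapid decay of the wave packets $\Phi_{s_3}$ off $I_s$; the constraint (\ref{wdplus}) with $\kappa$ large is used exactly here to close the argument.
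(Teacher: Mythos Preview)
Your overall size/energy/selection/summing skeleton matches the paper's, but two points are off. First, the tri-tiles here all live at a \emph{single scale}: by definition $|I_s|\simeq 1$ and $|\omega_{s_i}|\simeq 1$ for every $s\in\OQ$ (this is forced by Assumption~(\ref{assumption2})). There are therefore no nontrivial multi-scale trees, and no John--Nirenberg stopping-time to run. The paper replaces trees by \emph{vectorized tri-tiles} $\overrightarrow{s}^{\,l}=\{s':s'_l=s_l\}$ and $\overrightarrow{s}^{\,j,l}$ (Definition~\ref{def:tvect}), which are single-scale collections sharing a common spatial interval and indexed by the frequency strips $\omega\in\Omega$; the sizes and energies are defined accordingly (Definitions~\ref{def:size1}--\ref{def:energysequence}). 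Your ``supremum over $2$-trees'' has no content at a single scale, and your John--Nirenberg route to $\mathrm{size}_i\lesssim|E_i|/|I_s|$ does not go through in this geometry.

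Second, you have inverted the difficulty. The vector-valued energy bound for $h$ is in fact the \emph{easy} step: for each $n$ the third tiles $s_3$ with $s\in\OQ_n$ are pairwise disjoint (rank one plus sparseness), so Bessel gives $\sum_{s\in\OQ_n}|\langle h_n,\Phi_{s_3}\rangle|^2\lesssim\|h_n\|_{L^2}^2$, and summing in $n$ is all that is needed (Theorem~\ref{thm:energy2}); no $TT^*$ argument and no use of~(\ref{wdplus}) is required there. The genuine work is in the \emph{size} estimates (Theorems~\ref{thm:size1} and~\ref{thm:size2}). Within a vectorized tri-tile $\overrightarrow{s}^{\,l}$, the frequency intervals $(\omega_{s',n})_j$ are not lacunary; they form, up to a single common modulation, a bounded-overlap family of equal-length intervals indexed by $\Omega$. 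Controlling $\bigl(\sum_{s'}|\langle f,\Phi_{s'_j}\rangle|^2\bigr)^{1/2}$ by $\|f\|_{L^\infty}$ therefore requires Rubio de Francia's linear square-function inequality (at $p=2$) for the collection $(\pm c''\omega_n)_n$; this is precisely where the well-distributed hypothesis~(\ref{wdplus}) with large $\kappa$ enters, and it is the key analytic input you are missing. The vector-valued size $\overrightarrow{\textrm{size}}^{j,l}_3$ needs a parallel argument (Theorem~\ref{thm:size2}), plus a geometric verification that the relevant third-frequency intervals are disjoint, which is where the non-degeneracy of the singular line is used (Remark~\ref{rem:impp}). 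Finally, note that no exceptional-set argument is needed: the $L^\infty$ size bounds and $L^2$ energy bounds combine directly with the choice $\theta_i=1-2/p_i$ to give the weak-type inequality.
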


\section{Study of these combinatorial model sums.}

To prove Theorem \ref{thm:principal3}, we organize the collection $\OQ$ into sub-collections called {\it vectorized tri-tiles} and then we study orthogonality properties between them. We emphasize that the following estimates do not depend on the collection $\OQ$.

\gb Let us recall the notion of {\it sparseness}:

\begin{definition}[Definition 4.4 in \cite{MTT3}] A collection of intervals ${\mathcal I}:=(I)_I$ is said to be {\it sparse}, if $I$and $I'$ are any two intervals in ${\mathcal I}$ with $|I'|\leq |I|$ and $10^5 I \cap 10^5 I'\neq \phi,$ then either $|I'| < 10^9|I |$ or $I=I'$. By extension, we say that a collection of tri-tiles $\OQ$ is {\it sparse} if the collections
$$ \{I_s,\ s\in \OQ\} \qquad \textrm{and} \qquad \{\omega_{s_1}, \omega_{s_2}, \omega_{s_3},\ s\in \OQ\}
$$
are sparse.
\end{definition}

\mb We leave it to the reader to check that there exists a finite number $K$ such that every collection of tri-tiles can be split into $K$ sparse sub-collections. So without loss of generality, we can assume that the considered collection is sparse.

\mb As in \cite{B}, we will require the notion of vectorized tri-tile:

\begin{definition} \label{def:tvect} Let $s\subset \OQ$ be a tri-tile and $j\in\{1,2\}$, we define $\overrightarrow{s}^j$ to be the following collection of tri-tiles:
$$ \overrightarrow{s}^j := \left\{ s'\in \OQ, s_j=s'_j\right\}.$$
Then, we define for $\{j,l\}=\{1,2\}$
$$ \overrightarrow{s}^{j,l} := \cup_{t\in \overrightarrow{s}^j} \ \overrightarrow{t}^l.$$
\end{definition}

\mb Remark that if $t\in \overrightarrow{s}^j$ then $I_{t}=I_{s}$.

\begin{definition} \label{def:jdisjoint} Let $j\in\{1,2\}$ and $D:=(\overrightarrow{s_i}^l)_i$ be a collection of $l$-vectorized tri-tiles (with $l\neq j$).
We say that $D$ is strongly $j$-disjoint if
\begin{enumerate}
 \item for all $i\neq i'$ and for all $s\in \overrightarrow{s_i}^l, s'\in \overrightarrow{s_{i'}}^l$, we have $s_j \cap s'_j= \emptyset$
 \item and for $i\neq i'$ and $s\in \overrightarrow{s_i}^l$, $s'\in \overrightarrow{s_{i'}}^l,$ if $2\omega_{s_j} \cap 2\omega_{s'_j} \neq \emptyset$ then $I_{s_{i'}} \cap I_{s_i} = \emptyset$.
\end{enumerate}
\end{definition}

\mb We now define the quantities {\it size} and {\it energy}.

\begin{definition} \label{def:size1} Let $\OP$ be a collection of tri-tiles, $f$ a function, $j\in \{1,2\}$ and $l\neq j$. We define
$$ \overrightarrow{\textrm{size}}^l_j(f,\OP) :=\sup_{s \subset \OP} |I_s|^{-1/2}\left( \sum_{s'\in \overrightarrow{s}^l} \left|\langle f,\Phi_{s'_j}\rangle\right|^2 \right)^{1/2}.$$
\end{definition}
For a sequence of functions it is defined as:
\begin{definition}
Let $\OP$ be a collection of tri-tiles and $h:=(h_n)_{n\in\Z}$ be a
sequence of functions. We define for $\{j,l\}=\{1,2\}$
$$\overrightarrow{\textrm{size}} ^{j,l} _3(h,\OP) := \sup_{s\subset \OP} |I_s|^{-1/2} \left( \sum_{n\geq 0} \ \sum_{s'\in \overrightarrow{s}^{j,l} \cap \OP_n} \left| \langle h_{n},\Phi_{s'_3}\rangle\right|\right)^{1/2}.$$
\end{definition}


\begin{definition} \label{def:energy1} Let $\OP$ be a finite collection of tri-tiles, $j\in \{1,2\}$, $l\neq j$, and $f$ be a function. We define
$$\overrightarrow{\textrm{energy}} ^{l} _j(f,\OP):=\sup_{k\in\Z}\sup_{D\subseteq\OP} 2^k \left( \sum_{s\in D} |I_s|\right)^{1/2},$$
where we take the supremum over all the collections
$D\subseteq\OP$ of $j$-disjoint $l$-vectorised tri-tiles $\overrightarrow{s}^{l}\in D$ such that
$$ 2^{2k} |I_s| \leq \sum_{s'\in
\OQ \cap \overrightarrow{s}^{l}} \left|\langle f,\Phi_{s'_j}\rangle\right|^2 \leq
2^{2k+2} |I_s|. $$
\end{definition}

\mb We define the vectorized energy for a sequence of functions as
follows:

\begin{definition} \label{def:energysequence} Let $\OP$ be a collection of tri-tiles and
$h:=(h_n)_{n}$ be a sequence of functions. Then
$$\overrightarrow{\textrm{energy}}_3(h,\OP) := \sup_{D} \left(\sum_n   \sum_{s \in \OP_n\cap D} \left|\langle h_n,\Phi_{s_j}\rangle \right|^2 \right)^{1/2}$$
where we take the supremum over all the collections $D$ of different tri-tiles.
\end{definition}

\mb In the next section we prove estimates for all these new quantities.

\subsection{Estimates of the quantities ``size'' and ``energy''.}

Let us first obtain required estimates for the quantities associated with single function.

\begin{theorem} \label{thm:size1}
Let $\OP$ be a finite collection of tri-tiles, $f$ be a function and $j\in\{1,2\}$ and $l\neq j$, then
\be{size10bb} \overrightarrow{\textrm{size}}^l_j(f,\OP) \lesssim \|f\|_{L^\infty(\R)} \ee
\end{theorem}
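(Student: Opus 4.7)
The plan is to reduce the $L^\infty$-estimate to an $L^2$-estimate by multiplying both sides of the pairing with a polynomial weight, and then to invoke Bessel's inequality for the resulting almost-orthogonal family of modified wave packets.

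Fix $s\in\OP$. By the very definition of $\overrightarrow{s}^l$, every $s'\in\overrightarrow{s}^l$ satisfies $s'_l=s_l$; combined with the identity $I_{s'_i}=I_{s'}$ for $i\in\{1,2,3\}$ built into the definition of a tri-tile, this forces $I_{s'_j}=I_{s_l}=I_s$ for every such $s'$. In particular, all the wave packets $\Phi_{s'_j}$ are $L^2$-normalized and spatially concentrated on the common interval $I_s$. On the frequency side, condition (\ref{Qn2}) ties $\omega_{s'_j}$ to the fixed $\omega_{s_l}$ and to the element of $\Omega$ associated with $s'$; using that $\Omega$ is well-distributed and that $\OQ$ may be assumed sparse (up to splitting into finitely many pieces), the intervals $\omega_{s'_j}$ are pairwise disjoint even after a bounded enlargement, so the Fourier supports $\tfrac{9}{10}\omega_{s'_j}$ of the wave packets are pairwise disjoint.

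The key device is the polynomial weight $W(x):=1+\bigl((x-c(I_s))/|I_s|\bigr)^{2}$, which is everywhere $\geq 1$ and satisfies $\int_{\R} W(x)^{-2}\,dx\lesssim |I_s|$. We use the identity
$$\langle f,\Phi_{s'_j}\rangle=\bigl\langle f/W,\,W\Phi_{s'_j}\bigr\rangle.$$
On the one hand, $\|f/W\|_{L^2}^2\leq \|f\|_{L^\infty}^2\int_{\R} W^{-2}\,dx\lesssim |I_s|\,\|f\|_{L^\infty}^2$. On the other hand, $W$ is a polynomial in $x$, so multiplication by $W$ acts on the Fourier side as a constant-coefficient differential operator and does not enlarge compact supports; hence $\widehat{W\Phi_{s'_j}}$ is still supported in $\tfrac{9}{10}\omega_{s'_j}$. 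Choosing the decay exponent $M$ in Definition~\ref{def:wavepacket} large enough (say $M\geq 3$), the pointwise estimate on $\Phi_{s'_j}$ gives
$$\|W\Phi_{s'_j}\|_{L^2}^2\lesssim |I_s|^{-1}\int_{\R}\left(1+\frac{|x-c(I_s)|}{|I_s|}\right)^{4-2M}dx\lesssim 1,$$
uniformly in $s'$.

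Thus $\{W\Phi_{s'_j}\}_{s'\in\overrightarrow{s}^l}$ has uniformly bounded $L^2$-norms and pairwise disjoint Fourier supports, so Plancherel yields the Bessel-type inequality
$$\sum_{s'\in\overrightarrow{s}^l}\bigl|\langle g,W\Phi_{s'_j}\rangle\bigr|^2\lesssim \|g\|_{L^2(\R)}^2\qquad \text{for all }g\in L^2(\R).$$
Applied to $g=f/W$, this produces
$$\sum_{s'\in\overrightarrow{s}^l}\bigl|\langle f,\Phi_{s'_j}\rangle\bigr|^2\lesssim \|f/W\|_{L^2}^2\lesssim |I_s|\,\|f\|_{L^\infty}^2,$$
which is exactly (\ref{size10bb}) after dividing by $|I_s|$ and extracting a square root. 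The only delicate point — and this is the only place where the main obstacle lurks — is the check that the polynomial weight simultaneously preserves the Fourier localization of $\Phi_{s'_j}$ and does not blow up its $L^2$ norm; this forces the decay parameter $M$ to be chosen large with respect to $\deg W=2$, which is harmless since $M$ is arbitrary in Definition~\ref{def:wavepacket}.
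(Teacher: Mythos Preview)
Your proof is correct and takes a genuinely different, more elementary route than the paper's. The paper proceeds by first establishing that $\overrightarrow{s}^l$ contains at most one tri-tile per strip $\omega_n$, then showing that the frequency intervals $(\omega_{s,n})_j$ are translates (by a fixed $\xi_0$) of bounded dilates of the $\omega_n$, and finally invoking the $L^2$ case of Rubio de Francia's square-function inequality combined with a dyadic spatial decomposition of $f$ around $I_s$ to pass from $L^\infty$ to $L^2$. Your approach short-circuits all of this: the polynomial weight $W$ simultaneously puts $f/W$ into $L^2$ (with the correct factor $|I_s|$) and---because multiplication by a polynomial is a differential operator on the Fourier side---keeps the Fourier supports of $W\Phi_{s'_j}$ inside $\tfrac{9}{10}\omega_{s'_j}$, so that a single Bessel inequality finishes the job. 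What the paper's longer argument buys is an explicit link to the linear Rubio de Francia square function \eqref{eq:squuare}, which they emphasize (Remark~\ref{rem:impp3}) as the reason the global result is confined to the local-$L^2$ range and as evidence that the argument survives for arbitrary well-distributed collections; your argument is cleaner but hides this structural point.

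One small remark: your claim that the $\omega_{s'_j}$ are pairwise disjoint is stated a bit quickly. What actually makes it work is that distinct $s',s''\in\overrightarrow{s}^l$ must have distinct $\omega_{s'_j}$ (since $s'_l=s''_l$ and $\omega_{s'_j}=\omega_{s''_j}$ would force $s'_1=s''_1$, $s'_2=s''_2$, and then sparseness together with \eqref{Qn} forces $\omega_{s'_3}=\omega_{s''_3}$, hence $s'=s''$); once the intervals are distinct and all of comparable length, sparseness of the frequency grid gives the separation. This is routine, but it is the step where well-distributedness and sparseness really enter, not merely the Fourier-support disjointness of the wave packets.
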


\begin{remark} \label{rem:impp3}
The proof given below, can be slightly simplified. Indeed,
keeping the Assumption (\ref{assumption}) in mind, we decompose the frequency plane into cubes of length equal to one. So the geometry of such a decomposition is simple. \\
The proof, we give here is general and shows how the arguments are related to the geometry of the strip. With our approach we are able to use the linear square function associated with the collection $\Omega$. This proof also holds for an arbitrary well-distributed collection of intervals.
\end{remark}

\begin{proof}
As the collection $\OP$ is finite, we may fix a tri-tile $s$ such that
\begin{eqnarray}\label{size2}
\overrightarrow{\textrm{size}}^l_j(f,\OP) & = & \left( \frac{1}{|I_s|}\sum_{s'\in\overrightarrow{s}^{l}}
\left|\langle f,\Phi_{s'_j}\rangle\right|^2 \right)^{1/2}.
\end{eqnarray}
We know that $\overrightarrow{s}^{l}$ is a collection of tri-tiles having the following properties : for each integer $n$, $\overrightarrow{s}^{l} \cap \OP_n$ is a collection of tri-tiles $s'$ whose the frequency part $\omega_{s'_1}\times \omega_{s'_2} \times \omega_{s'_3}$ satisfies
\begin{itemize}
 \item $|\omega_{s'_1}|=|\omega_{s'_2}|=|\omega_{s'_3}| \simeq |\omega_{n}|$
\item for all $(\xi_1,\xi_2) \in \omega_{s'_1} \times \omega_{s'_2}$, $\xi_2-\xi_1 \in \omega_{n}$.
\end{itemize}
 Consequently, due to the property of the {\it collection of tri-tiles} (see Definition \ref{def:coll}) and sparseness, we get that
$$ \sharp (\overrightarrow{s}^{l} \cap \OP_n) \in\{0,1\}.$$
Let denote $\omega_{s,n}$ the (eventual) frequency cube corresponding to this tri-tile. So we have the following description of $\overrightarrow{s}^{l}$
$$ \overrightarrow{s}^{l} = \bigcup_{n} I_{s} \times \omega_{s,n}.$$
Consequently, we get
\begin{eqnarray*}
\overrightarrow{\textrm{size}}^l_j(f,\OP) & \leq & \left( \frac{1}{|I_s|} \sum_{n\geq 0} \left|\langle f,\Phi_{I_s \times (\omega_{s,n})_j }\rangle\right|^2 \right)^{1/2}.
\end{eqnarray*}
Since the functions $\Phi_{I_s \times (\omega_{s,n})_j }$ are normalized in $L^2(\R)$, have Fourier support contained in $(\omega_{s,n})_j$ and have very fast decay around $I_s$, we deduce that
$$ \left|\langle f,\Phi_{ I_s \times (\omega_{s,n})_j }\rangle\right| \leq \sum_{m\geq 0} m^{-M} \left\| \pi_{(\omega_{s,n})_j}(f) \right\|_{L^2(mI_s)},$$
where integer $M$ can be taken as large as we want. We recall that $\pi_{(\omega_{s,n})_j}$ is the linear Fourier multiplier associated with the symbol ${\bf 1}_{(\omega_{s,n})_j}$. \\
Moreover, it is easy to see that there exists a point $\xi_0$ such that
\begin{equation}
 (\omega_{s,n})_{j} \subset \xi_0 \pm c'' \omega _{n} \label{eq:important}
\end{equation}
for some numerical constant $c''$. \\
Indeed, let us fix $j=1$ and $l=2$ for example. By definition, we know that
\begin{itemize}
 \item $(\omega_{s})_2 =(\omega_{s_n})_2$ where
$s_n$ is the tri-tile belonging to $\overrightarrow{s}^{2} \cap \OP_n$
 \item and if $n_0$ is the integer such that $s$ belongs to $\OP_{n_0}$, we have
$$ |(\omega_{s_n})_1| \simeq |\omega_n| \simeq |\omega_{n_0}| \simeq |(\omega_{s})_1|,$$
\end{itemize}
where we have used Assumption (\ref{assumption}). \\
Let $\eta_0$ be any point of $(\omega_s)_{2}$, then for all $n\geq 0$, we have
\begin{align*}
 (\omega_{s_n})_{1} & \subset (\omega_{s_n})_{1}-(\omega_{s_n})_{2}+(\omega_{s_n})_{2} \\
                     & \subset - \omega_{n} + (\omega_{s_n})_{2} \\
                     & \subset - \omega_{n} + \eta_0-c(\omega_n)+ c\omega_n \\
                     & \subset \eta_0 -c' \omega_{n},
\end{align*}
with numerical constants $c,c'$.
The other situations (the case of integers $j\neq l$) can be studied exactly the same way. So (\ref{eq:important}) is proved. \\
Hence we conclude that
\begin{align*}
\overrightarrow{\textrm{size}}^l_j(f,\OP) & \leq \sum_{m\geq 0} m^{-M} \frac{1}{|mI_s|^{1/2}} \left\| \left(\sum_{n\geq 0} |\pi_{\xi_0 \pm c''\omega_{n}} (f)|^2 \right)^{1/2} \right\|_{L^2(mI_s)}.
\end{align*}
The point $\xi_0$ acts as modulation and does not depend on $n$, so we get
$$ \overrightarrow{\textrm{size}}^l_j(f,\OP)  \leq \sum_{m\geq 0} m^{-M}  \frac{1}{|m I_s|^{1/2}} \left\| \left(\sum_{n\geq 0} |\pi_{\pm c'' \omega_{n}} (e^{i\xi_0 .} f)|^2 \right)^{1/2} \right\|_{L^2(mI_s)}.$$
Thus the linear square function relative to the collection $(\pm c'' \omega_n)_{n\geq 0}$ comes into the picture as remarked earlier. Since the collection $(\omega_{n})_{n\geq 0}:=\Omega$ is assumed to satisfy (\ref{wdplus}) for some large enough $\kappa$ (see Lemma \ref{lem:wdplus}), the collection $(\pm c'' \omega_n)_{n\geq 0}$ is still a bounded covering of $\R$. So we can apply Rubio de Francia's inequality (see \cite{RF}) to the linear square function
\be{eq:squuare} f \rightarrow \left(\sum_{n\geq 0} |\pi_{\pm c'' \omega_{n}}(f)|^2 \right)^{1/2}. \ee
Indeed we will just apply the easy part for $p=2$. At this point we would like to point out that the appearance of this linear square function is the main reason for considering only the ``local-$L^2$ range'' for the exponents in the Theorem \ref{thm:principal}. \\
Let us now decompose $f$ as follows:
$$ f = f {\bf 1}_{4I_s} + \sum_{b\geq 2} f {\bf 1}_{2^{b+1}I_s \setminus 2^b I_s}.$$
For the first term, with a large enough integer $M$, we get that
$$
 \sum_{m\geq 0} m^{-M+4} \frac{1}{|mI_s|^{1/2}} \left\| \left(\sum_{n\in\Z} |\pi_{\pm c'' \omega_{n}} (e^{i\xi_0.} f{\bf 1}_{4I_s} )|^2 \right)^{1/2} \right\|_{L^2(m I_s)} \lesssim \frac{1}{|I_s|^{1/p}} \|f\|_{L^p(4I_s)} \lesssim \|f\|_{L^\infty}.$$
For the terms $f {\bf 1}_{2^{b+1}I_s \setminus 2^b I_s}$ with $b\geq 2$, we use the same reasoning by noticing the following fact
$$\langle f {\bf 1}_{2^{b+1}I_s \setminus 2^b I_s},\Phi_{s_j} \rangle = 2^{-2bM_1} \left\langle \frac{2^{2bM_1}}{(1+\frac{d(.,I_s)}{|I_s|})^{2M_1}} f {\bf 1}_{2^{b+1}I_s \setminus 2^b I_s}, (1+\frac{d(.,I_s)}{|I_s|})^{2M_1} \Phi_{s_j} \right\rangle.$$
We remark that the function $(1+\frac{d(.,I_s)}{|I_s|})^{2M_1} \Phi_{s_j}$ is always a wave packet for the tile $s_j$ (according to Definition \ref{def:wavepacket}). As $(1+\frac{d(x,I_s)}{|I_s|})\simeq 2^{b}$ for $x$ in $2^{b+1}I_s \setminus 2^b I_s$, the same arguments hold
as previously and we get an extra factor $2^{-2bM_1}$ with a power $M_1$ as large as we want. Consequently we obtain that
\begin{align*}
\overrightarrow{\textrm{size}}^l_j(f,\OP) & \leq \|f\|_{L^\infty(\R)} + \sum_{b\geq 2} 2^{-2bM_1} \|f\|_{L^\infty(\R)} \lesssim \|f\|_{L^\infty(\R)},
\end{align*}
which concludes the proof of (\ref{size10bb}).
\end{proof}

\begin{theorem} \label{thm:energy1} Let $\OP$ be a finite collection of tri-tiles. For $j\in\{1,2\}$, $l\neq j$ and a function $f$, we have (independent of the collection $\OP$)
$$ \overrightarrow{\textrm{energy}} ^{l} _j(f,\OP) \lesssim \|f\|_{L^2(\R)}.$$
\end{theorem}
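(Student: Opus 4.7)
The plan is the standard one for energy estimates in time–frequency analysis: first convert the claim into a Bessel-type inequality for the wave packets $\Phi_{s'_j}$, then exploit the strong $j$-disjointness of $D$ to obtain almost-orthogonality via a $TT^{*}$ / Schur test argument.

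Fixing $k\in\Z$ and a strongly $j$-disjoint family $D$ of $l$-vectorized tri-tiles satisfying the lower mass condition $2^{2k}|I_s|\leq \sum_{s'\in \OQ\cap\overrightarrow{s}^l}|\langle f,\Phi_{s'_j}\rangle|^2$, it suffices to show $2^{2k}\sum_{s\in D}|I_s|\lesssim \|f\|_{L^2}^2$. Summing the defining bound over $s\in D$ immediately reduces this to the Bessel-type inequality
\[
\sum_{s\in D}\sum_{s'\in \OQ\cap\overrightarrow{s}^l}\bigl|\langle f,\Phi_{s'_j}\rangle\bigr|^{2}\lesssim \|f\|_{L^2(\R)}^{2},
\]
uniformly in $D$, which is the analytic heart of the theorem.

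To prove this, I would run a $TT^{*}$ / Schur test argument. Letting $V:L^2(\R)\to \ell^2$ be the analysis operator $Vf=(\langle f,\Phi_{s'_j}\rangle)_{(s,s')}$ indexed by the pairs appearing in the sum, the target estimate $\|V\|^2\lesssim 1$ is equivalent to $\|VV^{*}\|_{\ell^2\to\ell^2}\lesssim 1$, which by Schur's test reduces to the diagonal-dominance bound
\[
\sup_{(s_0,s'_0)}\,\sum_{(s,s')}\bigl|\langle \Phi_{s'_j},\Phi_{(s'_0)_j}\rangle\bigr|\lesssim 1
\]
uniformly in $D$. Each individual inner product $\langle \Phi_{s'_j},\Phi_{(s'_0)_j}\rangle$ is controlled by the wave packet structure (Definition~\ref{def:wavepacket}): it vanishes when the frequency intervals $\omega_{s'_j}$ and $\omega_{(s'_0)_j}$ are disjoint (spectra are compactly supported in the respective $\frac{9}{10}$-dilates), and otherwise decays polynomially in $\mathrm{dist}(I_s,I_{s_0})/|I_s|$ thanks to the spatial concentration of $\Phi_{s'_j}$ around $I_{s'}=I_s$.

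The geometry of strongly $j$-disjoint families then absorbs these decay estimates. For the diagonal contribution $s=s_0$, the spatial intervals coincide with $I_{s_0}$, and the frequency intervals $\omega_{s'_j}$ for $s'\in\overrightarrow{s_0}^l$ each lie in a translate of $c''\omega_n$ (as observed in the proof of Theorem~\ref{thm:size1}), so the reinforced covering assumption~(\ref{wdplus}) bounds their overlap and produces an $O(1)$ sum. For $s\neq s_0$, condition~(2) of Definition~\ref{def:jdisjoint} forces $I_s\cap I_{s_0}=\emptyset$ whenever the $j$-frequencies meet, so any pair with non-negligible inner product has spatially separated wave packets; the grid property (Definition~\ref{def:grid}) then lets one sum the polynomial spatial tails along the dyadic scales of $\{I_s\}_{s\in D}$ to get again $O(1)$. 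The main obstacle is precisely this bookkeeping, juggling the bounded covering of frequency strips against the grid structure of physical intervals, with condition~(2) of strong $j$-disjointness serving as the bridge that converts frequency-closeness into spatial separation. Once this Schur-type bound is in place, the energy estimate drops out immediately.
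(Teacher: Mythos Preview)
Your approach is correct and essentially the same as the paper's: the paper also establishes almost-orthogonality of the packets $\Phi_{s'_j}$ over a strongly $j$-disjoint family, packaging the argument as a duality lemma (choosing optimal coefficients $c_{s_j}$ from an extremizing $k,D$) followed by an $L^2$ bound on $\sum c_{s_j}\Phi_{s_j}$, whose proof expands the square and controls the Gram entries $\langle \Phi_{s_j},\Phi_{s'_j}\rangle$ via exactly the frequency-support / spatial-separation dichotomy you outline. One minor correction: the spatial tails are summable not because of the grid structure across dyadic scales (under~(\ref{assumption2}) all $|I_s|\simeq 1$, so there is only one scale) but because strong $j$-disjointness forces the contributing spatial intervals $I_s$ to be pairwise disjoint.
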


\mb Here we follow the arguments given in \cite{MTT3} and provide the detail of the proof for easy reference. We first prove some lemmas.

\begin{lemma}\label{energylemma1} Let $\OP$ be a collection of tri-tiles and set $a_{s_j}:=\langle f ,\Phi_{s_j} \rangle$. Then there exists a collection $\U$ of strongly $j$-disjoint vectorized tri-tiles $(\overrightarrow{s_i} ^{l})_i$ and
complex numbers $c_{s_j}$ for all $s\in \cup_{i} \overrightarrow{s_i} ^{l}$ such
that
$$\overrightarrow{\textrm{energy}} ^{l} _j(f,\OP) \approx \left| \sum_{i} \sum_{s\in \overrightarrow{s_i} ^{l}}
a_{s_j}\bar{c}_{s_j}\right|.$$
and such that for all $i$
$$\sum_{s\in \overrightarrow{s_i} ^{l}} |c_{s_j}|^2 \lesssim \frac{|I_{s_i}|}{\sum_{i} |I_{s_i}|}.$$
\end{lemma}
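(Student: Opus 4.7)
The plan is to carry out a standard $TT^*$-type duality/normalization argument: the energy is defined as a supremum over strongly $j$-disjoint collections of $l$-vectorized tri-tiles, so the lemma should be nothing more than a reformulation that exhibits the ``dual'' sequence of coefficients realizing this supremum. I would start by selecting an almost-maximizer: by definition of $\overrightarrow{\textrm{energy}}^l_j(f,\OP)$, there exist an integer $k\in\Z$ and a collection $\U=(\overrightarrow{s_i}^l)_i$ of strongly $j$-disjoint $l$-vectorized tri-tiles such that
$$\overrightarrow{\textrm{energy}}^l_j(f,\OP) \simeq 2^k \Bigl( \sum_i |I_{s_i}|\Bigr)^{1/2}$$
and such that for every $i$
$$2^{2k}|I_{s_i}| \leq \sum_{s'\in \OP\cap \overrightarrow{s_i}^l} |a_{s'_j}|^2 \leq 2^{2k+2}|I_{s_i}|.$$

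The natural candidate for the dual coefficients is then the $\ell^2$-normalized version of the $a_{s_j}$'s. Concretely, I would set, for $s\in \bigcup_i \overrightarrow{s_i}^l$,
$$c_{s_j} := \frac{a_{s_j}}{2^{k}\bigl(\sum_{i'}|I_{s_{i'}}|\bigr)^{1/2}}.$$
Plugging this into $\sum_i \sum_{s\in \overrightarrow{s_i}^l} a_{s_j}\bar c_{s_j}$ turns it into $\frac{1}{2^k(\sum_{i'}|I_{s_{i'}}|)^{1/2}}\sum_i \sum_{s\in \overrightarrow{s_i}^l} |a_{s_j}|^2$; the inner sum is bounded below by $2^{2k}|I_{s_i}|$, so the whole expression is $\gtrsim 2^k(\sum_i |I_{s_i}|)^{1/2}\simeq \overrightarrow{\textrm{energy}}^l_j(f,\OP)$. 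The reverse inequality (up to a constant) uses the upper bound $\sum_{s\in \overrightarrow{s_i}^l}|a_{s_j}|^2 \leq 2^{2k+2}|I_{s_i}|$ in the same identity, so one obtains $\overrightarrow{\textrm{energy}}^l_j(f,\OP) \simeq |\sum_i\sum_s a_{s_j}\bar c_{s_j}|$ as required.

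The normalization condition on the $c_{s_j}$'s is then immediate from the upper bound on the energy level set:
$$\sum_{s\in \overrightarrow{s_i}^l} |c_{s_j}|^2 = \frac{\sum_{s\in \overrightarrow{s_i}^l}|a_{s_j}|^2}{2^{2k}\sum_{i'}|I_{s_{i'}}|} \leq \frac{2^{2k+2}|I_{s_i}|}{2^{2k}\sum_{i'}|I_{s_{i'}}|} \lesssim \frac{|I_{s_i}|}{\sum_{i'}|I_{s_{i'}}|}.$$
For $s\in \overrightarrow{s_i}^l\setminus \OP$ (if any occur in the definition of $\U$), we simply set $c_{s_j}=0$; this does not affect any of the estimates above.

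There is no real obstacle here beyond bookkeeping: the whole content of the lemma is that the supremum in the definition of the energy is equivalent to the $\ell^2$-dual norm of the sequence $(a_{s_j})$ restricted to a single strongly $j$-disjoint vectorized collection, and the witnesses are obtained by choosing the extremal $k$ and collection $\U$ in the supremum and then normalizing the $a_{s_j}$'s accordingly. The only mild subtlety to keep in mind is that $\U$ must remain strongly $j$-disjoint (as in Definition \ref{def:jdisjoint}), but this is automatic because the collection inherited from the definition of the energy already satisfies this property.
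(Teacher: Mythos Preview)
Your proof is correct and follows essentially the same approach as the paper: pick a (near-)optimizer $k$ and $\U$ in the definition of the energy, set $c_{s_j}=2^{-k}(\sum_{i'}|I_{s_{i'}}|)^{-1/2}a_{s_j}$, and read off both claims from the two-sided bound $2^{2k}|I_{s_i}|\le \sum_{s\in\overrightarrow{s_i}^l}|a_{s_j}|^2\le 2^{2k+2}|I_{s_i}|$. If anything, your write-up is slightly more complete than the paper's, which only displays the $\lesssim$ direction and asserts the reverse.
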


\begin{proof} Since the collection $\OP$ is finite, take $k$ and $U$ be an optimizer of the vectorized quantity ``energy''. For all $s\in \cup_{i} \overrightarrow{s_i} ^{l},$ define
$$c_{s_j}:= 2^{-k}(\sum_{i }|I_{s_i}|)^{-\frac{1}{2}} a_{s_j}.$$
Then,
\begin{eqnarray*}
\sum_{i} \sum_{s\in \overrightarrow{s_i} ^{l} } a_{s_j}\bar{c}_{s_j}&=& (\sum_{i} \sum_{s\in \overrightarrow{s_i} ^{l}} |a_{s_j}|^2)2^{-k}(\sum_{i} |I_{s_i}|)^{-\frac{1}{2}} \\
&\lesssim & 2^{k}(\sum_{i} |I_{s_i}|)^{\frac{1}{2}}\\
&=& \overrightarrow{\textrm{energy}} ^{l} _j(f,\OP).
\end{eqnarray*}
The other side of inequality is proved using the second condition. Also the inequality
$$\sum_{s\in \overrightarrow{s_i} ^{l}} |c_{s_j}|^2 \lesssim \frac{|I_{s_i}|}{\sum_{i }|I_{s_i}|}$$
is immediate from the definition of $c_{s_j}$.
\end{proof}

\begin{lemma} \label{egergylemma2} Let $\U$ be a collection of strongly disjoint vectorized tri-tiles $(\overrightarrow{s_i} ^{l})_i$ in $\OP$ and for each $s\in \overrightarrow{s_i} ^{l} \in \U$, let $c_{s_j}$ be a complex number such
that
\begin{eqnarray}\label{energy2}\sum_{s\in \overrightarrow{s_i} ^{l}} |c_{s_j}|^2 \lesssim A
|I_{s_i}|,
\end{eqnarray}
for some $A>0$. Then we have
\begin{eqnarray}\label{energyl2} \left \| \sum_{i} \sum_{s\in \overrightarrow{s_i} ^{l}}
c_{s_j}\Phi_{s_j}\right \|_{L^2(\R)} &\lesssim&  (A\sum_{i} |I_{s_i}|)^{\frac{1}{2}}.
\end{eqnarray}
\end{lemma}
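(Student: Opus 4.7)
The plan is to prove this almost-orthogonality estimate by expanding the squared $L^2$-norm and splitting the double sum into a diagonal part (terms with $i=i'$) and an off-diagonal part (terms with $i \neq i'$). Explicitly, we write
\begin{equation*}
\left\| \sum_{i} \sum_{s\in \overrightarrow{s_i}^{l}} c_{s_j}\Phi_{s_j}\right\|_{L^2}^2 = \sum_{i,i'}\sum_{s\in \overrightarrow{s_i}^{l}}\sum_{s'\in \overrightarrow{s_{i'}}^{l}} c_{s_j}\overline{c_{s'_j}} \,\langle \Phi_{s_j}, \Phi_{s'_j}\rangle
\end{equation*}
and show that both contributions are controlled by $A\sum_i |I_{s_i}|$. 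The main work is in the off-diagonal sum, for which the strong $j$-disjointness hypothesis is tailor-made.

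For the diagonal sum (when $i=i'$), the key observation is that within a single vectorized tri-tile $\overrightarrow{s_i}^{l}$, all tri-tiles $s$ share the same spatial interval $I_{s}=I_{s_i}$ (by the definition of $\overrightarrow{s_i}^{l}$, which fixes the $l$-th tile, and by condition (\ref{conddef})). Since the collection is sparse and forms part of a grid in the frequency variable, the intervals $\omega_{s_j}$ for distinct $s\in \overrightarrow{s_i}^{l}$ are essentially disjoint, so the wave packets $\Phi_{s_j}$ have Fourier supports included in $\tfrac{9}{10}\omega_{s_j}$, hence are pairwise orthogonal. Therefore, using the hypothesis (\ref{energy2}),
\begin{equation*}
\left\| \sum_{s\in \overrightarrow{s_i}^{l}} c_{s_j}\Phi_{s_j}\right\|_{L^2}^2 \lesssim \sum_{s\in \overrightarrow{s_i}^{l}} |c_{s_j}|^2 \lesssim A|I_{s_i}|,
\end{equation*}
and summing over $i$ yields the desired diagonal bound $A\sum_i |I_{s_i}|$.

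For the off-diagonal sum, I distinguish two cases based on strong $j$-disjointness. If $2\omega_{s_j}\cap 2\omega_{s'_j}=\emptyset$ then the Fourier supports of $\Phi_{s_j}$ and $\Phi_{s'_j}$ are disjoint and the inner product vanishes. Otherwise $I_{s_i}\cap I_{s_{i'}}=\emptyset$; combined with $|I_{s_i}|\simeq |I_{s_{i'}}|\simeq 1$ (from Assumption (\ref{assumption2})) and the rapid decay of the wave packets around their spatial intervals, I obtain
\begin{equation*}
|\langle \Phi_{s_j},\Phi_{s'_j}\rangle| \lesssim \left(1+\operatorname{dist}(I_{s_i},I_{s_{i'}})\right)^{-M}
\end{equation*}
for any large $M$. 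Using Cauchy--Schwarz together with $|c_{s_j}\overline{c_{s'_j}}|\leq \tfrac12(|c_{s_j}|^2+|c_{s'_j}|^2)$, summing the kernel $(1+\operatorname{dist}(I_{s_i},I_{s_{i'}}))^{-M}$ over $i'$ produces a bounded Schur multiplier (since the spatial intervals $I_{s_i}$ form a grid and are essentially disjoint in each scale), which again brings us back to the diagonal estimate.

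The main obstacle is the precise book-keeping of the off-diagonal decay: one must verify that the wave-packet tail $(1+\operatorname{dist}(I_{s_i},I_{s_{i'}}))^{-M}$ gives a kernel with uniformly bounded row and column sums on the index set $\{i\}$. This is where both clauses of Definition \ref{def:jdisjoint} play a role, the first eliminating spatial overlaps within the distinguished tile component and the second pushing apart any tri-tiles whose frequency components are close. Once this Schur-type bound is in place, summing off-diagonal contributions against the hypothesis $\sum_{s\in \overrightarrow{s_i}^{l}}|c_{s_j}|^2 \lesssim A|I_{s_i}|$ closes the estimate and completes the proof.
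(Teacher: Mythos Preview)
Your approach is essentially the same as the paper's: expand the squared $L^2$-norm, discard pairs $(s,s')$ with disjoint Fourier support, bound the surviving inner products by the spatial decay of wave packets, split the product $|c_{s_j}c_{s'_j}|$ by the arithmetic--geometric inequality, and control the resulting sum via a Schur-type argument against hypothesis~(\ref{energy2}). The paper does not separate diagonal and off-diagonal terms but treats all pairs uniformly; this is only cosmetic.

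There is, however, a real gap in your justification of the Schur bound. You claim that $\sum_{i'}\bigl(1+\operatorname{dist}(I_{s_i},I_{s_{i'}})\bigr)^{-M}$ is bounded because ``the spatial intervals $I_{s_i}$ form a grid and are essentially disjoint in each scale.'' This does not follow: the grid $\{I_s:s\in\OP\}$ has bounded overlap as a \emph{set} of intervals, but the indexed family $(I_{s_i})_i$ may repeat with arbitrarily high multiplicity, since many distinct vectorized tri-tiles $\overrightarrow{s_i}^{\,l}$ can share the same spatial interval while differing in their $l$-frequency component. Hence the row sum over $i'$ need not be bounded. The paper circumvents this by working at the level of tiles rather than of the indices~$i$: for a fixed $s$, any surviving $s'$ must have $\omega_{s_j}\cap\omega_{s'_j}\neq\emptyset$, and sparseness together with the uniform scale forces $\omega_{s'_j}=\omega_{s_j}$; then condition~(1) of strong $j$-disjointness (namely $s_j\cap s'_j=\emptyset$) yields that the spatial intervals $I_{s'_j}=I_{s_{i'}}$ of all such surviving tiles are pairwise disjoint, so the decay sum over these $(i',s')$ is genuinely $O(1)$. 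Note that it is condition~(1), not condition~(2), of Definition~\ref{def:jdisjoint} that carries the argument here; your case split leans on condition~(2) but then never invokes condition~(1) where it is actually needed.
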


\begin{proof} Taking the square on both sides of~(\ref{energyl2}) it is enough
to prove that
$$\sum_{i,i'} \sum_{s\in \overrightarrow{s_i} ^{l},s'\in \overrightarrow{s_{i'}} ^{l}}
|c_{s_j}c_{s'_j}| |\langle \Phi_{s_j} ,\Phi_{s'_j} \rangle| \lesssim
A \sum_{i} |I_{s_i}|.$$
Since the Fourier transforms of $\Phi_{s_j}$ and $\Phi_{s'_j}$
are supported in $\omega_{s_j}$ and $\omega_{s'_j}$ respectively,
the term $\langle \Phi_{s_j} ,\Phi_{s'_j} \rangle$ vanishes if
$\omega_{s_j}\cap\omega_{s'_j}= \emptyset $. So we need to consider only those terms for which $\omega_{s_j}\cap\omega_{s'_j}\neq \emptyset$. As the sum in $s_i$ and $s_{i'}$
is symmetric, we can assume that $|I_{s_{i'}}|\leq |I_{s_i}|$, which implies
$|\omega_{s_j}|\leq |\omega_{s'_j}|$. \\
Using the decay property of $\Phi_{s_j}$, we have
\begin{align*}\label{phi1}
|\langle \Phi_{s_j} ,\Phi_{s'_j} \rangle| & \lesssim
\frac{|I_{s'}|^{\frac{1}{2}}}{|I_{s}|^{\frac{1}{2}}}\left(1+\frac{\textrm{dist}(I_s,I_{s'})}{|I_s|}\right)^{-100} \\
& \lesssim
\frac{|I_{s_{i'}}|^{\frac{1}{2}}}{|I_{s_i}|^{\frac{1}{2}}}\left(1+\frac{\textrm{dist}(I_{s_i},I_{s_{i'}})}{|I_{s_i}|}\right)^{-100} \\
& \lesssim \left(1+\frac{\textrm{dist}(I_{s_i},I_{s_{i'}})}{|I_{s_i}|}\right)^{-100},
\end{align*}
since $|I_{s_{i'}}|\simeq |I_{s_i}| \simeq 1$. \\
Substituting this we only need to prove that
\begin{eqnarray} \label{energy3} \sum_{i, i'} \left(1+\frac{\textrm{dist}(I_{s_i},I_{s_{i'}})}{|I_{s_i}|}\right)^{-100} \sum_{\genfrac{}{}{0pt}{}{s\in \overrightarrow{s_i} ^{l}, s'\in \overrightarrow{s_{i'}} ^{l}}{
\genfrac{}{}{0pt}{}{\omega_{s_j}\cap\omega_{s'_j}\neq \emptyset}{|\omega_{s_j}|\leq
|\omega_{s'_j}|}}}  |c_{s_j}c_{s'_j}|
\lesssim A \sum_{i} |I_{s_i} |.
\end{eqnarray}
Since all the tiles have equivalent size, we know that $|\omega_{s_j}|\thickapprox |\omega_{s'_j}|$. Then we
estimate using the following
$$|c_{s_j}c_{s'_j}| \lesssim |c_{s_j}|^2+|c_{s'_j}|^2.$$
It is enough to estimate the
contribution of the first term $|c_{s_j}|^2$, as the second one is
similar. For a fixed $\omega_{s_j}$, the above condition and the sparseness imply that necessarily $\omega_{s'_j}=\omega_{s_j}$. Hence by the definition of $j$-disjoint vectorized tri-tiles (because  $s'\in \overrightarrow{s_{i'}} ^{l}\neq \overrightarrow{s_i} ^{l}$, $s',s \in \overrightarrow{s_i} ^{l}$ would imply $s=s'$ due to the sparseness and the fact that $j\neq l$ and the rank $2$ of the collection of tri-tiles),
the summation over $i'$ and $s'$ have disjoint spatial intervals. As
a result, this sum just contributes a numerical constant and hence we
can estimate the right hand side of (\ref{energy3}) by
$$\sum_{i} \sum_{s\in \overrightarrow{s_i} ^{l}} |c_{s_j}|^2$$
which satisfies the desired inequality because of (\ref{energy2}).
\end{proof}

\noindent{\bf Proof of Theorem \ref{thm:energy1}:}
The proof follows applying Lemma \ref{energylemma1}, as we can find a
collection of strongly $j$-disjoint vectorized tri-tiles $\U:=\{\overrightarrow{s_i} ^{l}\}_i$, and complex coefficients $c_{s_j}$ for all $s\in
\cup_{i} \overrightarrow{s_i} ^{l}$ such that
$$\textrm{energy}_j(f,\OP)\thickapprox \left|\langle f,\sum_{i} \sum_{s\in \overrightarrow{s_i} ^{l}}
c_{s_j}\Phi_{s_j}\rangle \right| $$
and
$$\sum_{s\in \overrightarrow{s_i} ^{l}} |c_{s_j}|^2 \lesssim \frac{|I_{s_i}|}{\sum_{i} |I_{s_i}|}$$
for all indices $i$. Applying Cauchy-Schwartz inequality and Lemma \ref{egergylemma2}, we get the desired result.
\findem

\mb Now we prove similar results for the ``$l^2$-valued quantities''.

\begin{theorem} \label{thm:size2} Let $\OP$ be a finite collection of tri-tiles, $h:=(h_n)$ be a sequence of functions and $\{j,l\}=\{1,2\}$, then
\be{size10b} \overrightarrow{\textrm{size}}^{j,l}_3(h,\OP) \lesssim \left\| \|h_n\|_{l^2(\N)} \right\|_{L^\infty(\R)},\ee
where the implicit constants do not depend on the functions and the collection of tri-tiles.
\end{theorem}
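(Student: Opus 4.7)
The plan is to mimic the proof of Theorem~\ref{thm:size1}, upgrading each step to the $\ell^2$-valued setting. Denote $H(x):=(h_n(x))_n$. Since $\OP$ is finite, fix a tri-tile $s$ realizing the supremum in $\overrightarrow{\textrm{size}}^{j,l}_3(h,\OP)$. Reading the definition with the natural squared interior (the missing square in the statement of the theorem appears to be a typo, consistent with Definition~\ref{def:size1}), the goal becomes
$$\sum_{n\geq 0}\sum_{s'\in\overrightarrow{s}^{j,l}\cap \OP_n}|\langle h_n,\Phi_{s'_3}\rangle|^2 \lesssim |I_s|\,\big\|\|H\|_{l^2}\big\|_{L^\infty(\R)}^2.$$

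The first step is to parameterize $s'\in\overrightarrow{s}^{j,l}\cap \OP_n$ and analyze the geometry of $\omega_{s'_3}$. By definition every such $s'$ arises from an intermediate tri-tile $t\in\overrightarrow{s}^j$, characterized by the unique index $m$ with $t\in\OP_m$. Sparseness and the rank-$2$ structure of tri-tiles ensure $I_{s'}=I_s$ and that each pair $(m,n)$ produces at most one $s'$. The constraint $t_j=s_j$ forces $\omega_{t_l}\subset\omega_{s_j}\pm\omega_m$; then $s'\in\OP_n$ with $s'_l=t_l$ forces $\omega_{s'_j}\subset\omega_{t_l}\pm\omega_n$. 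Hence $\omega_{s'_3}=-(\omega_{s'_1}+\omega_{s'_2})$ lies in a translate of a fixed interval of length $\lesssim 1$ by a linear combination of $c(\omega_{s_j})$, $c(\omega_m)$, $c(\omega_n)$. Combined with Assumption~(\ref{wdplus}), this shows that for each fixed $n$ the family $\{\omega_{s'_3}\}_m$ inherits bounded overlap from $\Omega$. This geometric step is the main obstacle: the double vectorization adds a layer indexed by $m$, which must be controlled via the well-distributedness of $\Omega$ pushed through translations indexed by $c(\omega_m)$ and $c(\omega_n)$.

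For the estimate itself, I follow the dyadic-annular decomposition of the proof of Theorem~\ref{thm:size1}. Write $h_n = \sum_{b \geq 0} h_n {\bf 1}_{A_b}$ with $A_b = 2^{b+1}I_s \setminus 2^b I_s$ for $b\geq 1$ and $A_0 = 2I_s$, and use the weighted identity
$$\langle h_n {\bf 1}_{A_b}, \Phi_{s'_3}\rangle = 2^{-2bM_1}\Big\langle \tfrac{2^{2bM_1} h_n {\bf 1}_{A_b}}{(1 + d(\cdot, I_s)/|I_s|)^{2M_1}},\ (1 + d(\cdot, I_s)/|I_s|)^{2M_1}\Phi_{s'_3}\Big\rangle$$
to gain a factor $2^{-2bM_1}$, since the reweighted $\widetilde\Phi_{s'_3}$ is still a wave packet in $s'_3$. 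Bessel's inequality for the quasi-orthogonal family $\{\widetilde\Phi_{s'_3}\}_m$, valid by the bounded overlap of $\{\omega_{s'_3}\}_m$ established above, then yields for each fixed $n$ and $b$
$$\sum_m |\langle h_n {\bf 1}_{A_b}, \Phi_{s'_3}\rangle|^2 \lesssim 2^{-4bM_1}\|h_n\|_{L^2(A_b)}^2.$$
Summing in $n$ by Fubini, $\sum_n \|h_n\|_{L^2(A_b)}^2 = \int_{A_b} \|H\|_{l^2}^2 \leq |A_b|\,\big\|\|H\|_{l^2}\big\|_{L^\infty}^2 \lesssim 2^b |I_s|\,\big\|\|H\|_{l^2}\big\|_{L^\infty}^2$; summing in $b$ against $2^{-b(4M_1 - 1)}$ (with $M_1$ taken large) completes the bound. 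The essential new input beyond Theorem~\ref{thm:size1} is that Bessel's inequality for the quasi-orthogonal wave-packet family replaces the linear Rubio de Francia square function used there.
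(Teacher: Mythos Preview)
Your proposal is correct and follows essentially the same route as the paper. Both arguments fix a maximizing tri-tile, show that for each fixed $n$ the frequency intervals $\{\omega_{s'_3}: s'\in\overrightarrow{s}^{j,l}\cap\OP_n\}$ are pairwise separated at distance $\gtrsim 1$ (by tracing the two-step vectorization back to the well-distributedness~(\ref{wdplus}) of $\Omega$), and then feed this disjointness into the annular decomposition scheme of Theorem~\ref{thm:size1}. The paper states the conclusion as ``the corresponding linear square function is bounded on $L^2$ uniformly in $n$, then repeat the proof of Theorem~\ref{thm:size1}'', while you unwind that step explicitly as a Bessel inequality for the (weighted) wave packets; these are the same estimate. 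Your parameterization of $s'$ by the pair $(m,n)$ is exactly the paper's identification of the intermediate tri-tile $\tau\in\overrightarrow{s_0}^j$ with its strip index, and your observation that the missing square in the definition is a typo is correct.
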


\begin{proof}
 Let us choose a tri-tile $s_0$  such that
$$\overrightarrow{\textrm{size}}^{j,l}_3(h,\OP) := |I_{s_0}|^{-1/2} \left(
\sum_{n\geq 0} \sum_{s'\in \overrightarrow{s_0}^{j,l} \cap \OP_n} \left| \langle
h_{n},\Phi_{s'_3}\rangle\right|^2\right)^{1/2}.$$
Fix $n\geq 0$ and to make computation easy, assume that $j=2$ and $l=1$. We analyze the collection $U:=(\omega_{s,3})_{s\in \overrightarrow{s_0}^{2,1}\cap \OP_n}$ and prove that it consists of disjoint intervals. Consider two different intervals $\omega_{s,3}$ and $\omega_{s',3}$ in this collection. Then by definition, there exist $\tau,\tau'\in \overrightarrow{s_0}^{2}$ such that $\omega_{s,1}=\omega_{\tau,1}$ and $\omega_{s',1}=\omega_{\tau',1}$. Since the collection $\OP_n$ is of rank one, we know that $\tau$ and $\tau'$ should necessarily belong to different strips : indeed if $\tau,\tau'$ belong to the same strip, then due to the sparseness and the rank one of the collection $\overrightarrow{s}^{2}$, we deduce that $\tau=\tau'$ and so $\omega_{s,1}=\omega_{s',1}$, which with $(\omega_{s,2}-\omega_{s,1})  \cap (\omega_{s',2}-\omega_{s',1}) \neq \emptyset$ (since $s,s'$ belong to the same strip) and sparseness yields that $s=s'$. Hence $\omega_{s,3}=\omega_{s',3}$, but it is not possible. However these two strips have equivalent widths (since Assumption (\ref{assumption2})). Due to the well-distributed property (\ref{wdplus}) and $\omega_{\tau,2}=\omega_{\tau',2}$ (by definition of the $2$-vectorized tri-tile), we know that
$$ d(\omega_{\tau,1},\omega_{\tau',1}) \geq d(\omega_p,\omega_{p'}) \geq \kappa >>1 $$
where $p$ denotes the strip of $\tau$ and $p'$ the strip of $\tau'$ and $\kappa>>1$ as introduced in Lemma \ref{lem:wdplus}. Finally, we deduce that
\begin{equation} d(\omega_{s,1},\omega_{s',1}) \geq \kappa >>1 \label{eq}
\end{equation}
Moreover, we know that
$$ 0\in  \omega_{s,1}+ \omega_{s,2}+\omega_{s,3} \quad \textrm{and} \quad \omega_{s,2}-\omega_{s,1} \in \omega_p $$
hence
$$ -\omega_{s,3} \subset \omega_{s,1}+ \omega_{s,1} + \omega_p. $$
Similarly,
$$ -\omega_{s',3} \subset \omega_{s',1}+ \omega_{s',1} + c\omega_{p'}. $$
Thanks to (\ref{eq}) and $|\omega_p| \simeq |\omega_{p'}|\simeq 1<< \kappa$, the two previous embeddings yield
$$ d(\omega_{s,3},\omega_{s',3}) \gtrsim 1 \simeq |\omega_{s,3}| \simeq  |\omega_{s,3}|.$$
Consequently, we obtain that the collection $U:=(\omega_{s,3})_{ s\in \overrightarrow{s_0}^{j,l}\cap \OP_n}$ is composed of disjoint intervals of equivalent length. So the corresponding linear square function
$$  h_n \rightarrow \left(\sum_{s\in \overrightarrow{s_0}^{j,l} \cap \OP_n} \left| \langle
h_{n},\Phi_{s_3}\rangle\right|^2\right)^{1/2}$$
is bounded in $L^2(\R)$, uniformly in $n$. Then we repeat the proof of Theorem \ref{thm:size1}, concerning the estimate of the ``size'' quantity,
using $L^2$-boundedness of this new square function instead of the one given by (\ref{eq:squuare}) as before. The same arguments still hold and permit us to achieve the desired result.
\end{proof}

\begin{remark} \label{rem:impp} Our aim in this remark is to describe how the arguments depend on the singular line (see Remark \ref{rem:impp2}).
If we replace the symbol $\chi_\omega (\eta-\xi)$ (defining the bilinear multiplier operator $\pi_\omega$) by any symbol $m_\omega(\eta,\xi)$ supported in $\{\lambda_1\xi-\lambda_2 \eta \in \omega\}$ and
$$ \left\| \partial_{(\xi,\eta)}^\alpha m_\omega\right\|_{L^\infty} \leq C_\alpha$$
for all multi-index $\alpha$.
Then, we leave it to the reader to check that all the previous results still hold (uniformly in $\lambda_1,\lambda_2$): indeed they are based on the decomposition of the frequency plane into cubes of length equivalent to $1$ and the proofs do not use the geometry of the strips (see Remark \ref{rem:impp3}). \\
Only the previous proof uses the structure of the strips and the arguments used degenerate when $\lambda_1=\lambda_2$. That is why Theorem \ref{thm:principal} can be extended to such symbols as soon as $\lambda_1\neq \lambda_2$.
\end{remark}

\mb We now obtain the following bounds for the quantity ``energy''~:

\begin{theorem} \label{thm:energy2} Let $\OP$ be a collection of tri-tiles and  $h=(h_n)_{n\geq 0}$ be a sequence of functions, then we have
\be{energy1bis} \overrightarrow{\textrm{energy}}_3(h,\OP) \lesssim \left\|\|h_n\|_{l^2(\N)}\right\|_{L^2(\R)}. \ee
\end{theorem}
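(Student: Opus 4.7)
\textbf{Proof plan for Theorem \ref{thm:energy2}.} The plan is to reduce the bound to a Bessel inequality by duality and then to prove the Bessel inequality by an almost-orthogonality expansion in the spirit of Lemma \ref{egergylemma2}. Fix an arbitrary subcollection $D\subseteq \OP$ realizing the supremum defining $\overrightarrow{\textrm{energy}}_3(h,\OP)$ up to a factor of $2$. What has to be shown is
$$ \sum_{n\geq 0}\sum_{s\in \OP_n\cap D} |\langle h_n,\Phi_{s_3}\rangle|^2 \lesssim \int_{\R} \sum_{n\geq 0} |h_n(x)|^2 \,dx. $$
Dualizing in $\ell^2$ over the index set $\{(s,n):s\in\OP_n\cap D\}$, this is equivalent to showing that for every sequence of scalars $(a_{s,n})$,
$$ \left\| \left( \sum_{s\in \OP_n\cap D} a_{s,n}\Phi_{s_3}\right)_n \right\|_{L^2(\R,\ell^2(\N))}^2 \lesssim \sum_{n,s} |a_{s,n}|^2. $$
Since the left hand side equals $\sum_n \|\sum_s a_{s,n}\Phi_{s_3}\|_{L^2(\R)}^2$, the problem decouples over $n$, and it suffices to prove, uniformly in $n$,
$$ \left\| \sum_{s\in\OP_n\cap D} a_{s,n}\Phi_{s_3} \right\|_{L^2(\R)}^2 \lesssim \sum_{s\in\OP_n\cap D} |a_{s,n}|^2. \quad (\ast)$$

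For the Bessel estimate $(\ast)$ at a fixed $n$, I would expand the squared norm as $\sum_{s,s'} a_{s,n}\overline{a_{s',n}}\langle \Phi_{s_3},\Phi_{s'_3}\rangle$. By the Fourier support condition in Definition \ref{def:wavepacket}, the inner product vanishes unless $\omega_{s_3}\cap \omega_{s'_3}\neq \emptyset$. Because Assumption (\ref{assumption2}) forces $|\omega_{s_3}|\simeq |\omega_{s'_3}|\simeq 1$ and the collection of frequency intervals is sparse, the only surviving case is $\omega_{s_3}=\omega_{s'_3}$. In that case, the rank condition of a collection of tri-tiles (third bullet of Definition \ref{def:coll}) together with (\ref{Qn}) and (\ref{Qn2}) applied to $\OP_n$ forces $\omega_{s_1}=\omega_{s'_1}$ and $\omega_{s_2}=\omega_{s'_2}$: the relation $c(\omega_{s_2})-c(\omega_{s_1})\simeq c(\omega_n)$ combined with equality of $c(\omega_{s_3})$ and (\ref{Qn}) fixes the centers of $\omega_{s_1},\omega_{s_2}$, after which sparseness yields the intervals themselves.

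Thus the only cross terms that survive are those where the frequency triples of $s$ and $s'$ coincide and only the spatial intervals $I_s,I_{s'}$ differ. For those pairs, the spatial decay of the wave packets gives
$$ |\langle \Phi_{s_3},\Phi_{s'_3}\rangle| \lesssim \left( 1 + \frac{\textrm{dist}(I_s,I_{s'})}{|I_s|}\right)^{-M}$$
for an $M$ as large as desired (we are in the regime $|I_s|\simeq |I_{s'}|\simeq 1$). Using Cauchy--Schwarz in the form $|a_{s,n}\overline{a_{s',n}}|\leq \tfrac12(|a_{s,n}|^2+|a_{s',n}|^2)$, and noting that $\{I_s:s\in \OP\}$ is a grid, the sum in $s'$ of the decay factor is bounded by a numerical constant; this yields $(\ast)$. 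Summing the resulting inequality over $n$ gives the stated bound.

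The only possible obstacle is the case analysis showing $\omega_{s_3}=\omega_{s'_3}\Rightarrow \omega_{s_1}=\omega_{s'_1},\ \omega_{s_2}=\omega_{s'_2}$ within $\OP_n$; once this combinatorial reduction is in place, the remainder is the now-standard Schur-type argument over a grid of unit-scale spatial intervals, exactly as in the proof of Lemma \ref{egergylemma2}.
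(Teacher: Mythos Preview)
Your argument is correct and follows essentially the same route as the paper: fix $D$, decouple over $n$, and for each fixed $n$ establish the Bessel inequality $\sum_{s\in\OP_n\cap D}|\langle h_n,\Phi_{s_3}\rangle|^2\lesssim \|h_n\|_{L^2}^2$, then sum in $n$. The paper dispatches the Bessel step in one sentence (``each collection $D\cap\OP_n$ is of rank one, by sparseness \ldots\ disjoint tri-tiles of equivalent size \ldots\ it can be easily checked''), whereas you spell out the almost-orthogonality computation explicitly via duality, the Fourier-support/sparseness reduction to $\omega_{s_3}=\omega_{s'_3}$, the combinatorial deduction that the full frequency triple then coincides within $\OP_n$, and the Schur-type spatial decay estimate over the grid of unit intervals. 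Your version is thus a fleshed-out form of the paper's terse proof rather than a different approach.
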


\begin{proof} Let us fix a collection of tri-tiles $D$. Since each collection $D \cap \OP_n$ is of rank one, by sparseness the collections $D \cap \OP_n$ can be assumed to be the collections of disjoint tri-tiles of equivalent size. Then it can be easily checked that
$$ \sum_{s \in \OP_n\cap D} \left|\langle h_n,\Phi_{s_j}\rangle \right|^2 \lesssim \|h_n\|_{L^2(\R)}$$
which permits us to conclude the proof.
\end{proof}

\subsection{Proof of Theorem \ref{thm:principal3}.}

 We follow the ``standard'' reasoning employed for this kind of time-frequency analysis. So we first obtain the estimate for one vectorized tri-tile and then use the combinatorial algorithm to obtain the final estimate for the entire collection of tri-tiles.

\begin{proposition}[Tri-tile estimate] \label{prop:tree} Let $\OP$ be a collection of tri-tiles. Then for each tri-tile $s_0\in\OP$, index $\{j,l\}:=\{1,2\}$, there exists an implicit constant (independent of the tri-tile $s_0$ and the collection $\OP$) such that for all functions $f_1,f_2\in\s(\R)$ and all sequences $f_3=(f_{3,n})_n,$ we have
\begin{align*}
 \Lambda_{\overrightarrow{s_0}^{j,l}} (f_1,f_2,f_3) & :=\sum_{n\geq 0} \sum_{s\in \overrightarrow{s_0}^{j,l} \cap \OQ_n} |I_s|^{-1/2} \left|\langle f_1,\Phi_{s_1}\rangle
\langle f_2,\Phi_{s_2}\rangle \langle f_{3,n},\Phi_{s_3}\rangle\right| \\
 & \lesssim |I_s| \overrightarrow{\textrm{size}}^{2}_1(f_i,\OP) \overrightarrow{\textrm{size}}^{1}_2(f_i,\OP) \overrightarrow{\textrm{size}} ^{j,l}_3(f_3,\OP).
\end{align*}
\end{proposition}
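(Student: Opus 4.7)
The plan is a double Cauchy-Schwarz decomposition, arranged so that the three vectorized sizes appear each exactly once and each activated at the appropriate tri-tile. For concreteness, fix $j=1$ and $l=2$. The starting observation is the decomposition
$$\overrightarrow{s_0}^{1,2} \;=\; \bigcup_{t\in\overrightarrow{s_0}^{\,1}} \overrightarrow{t}^{\,2},$$
which is essentially disjoint by sparseness: if $s\in\overrightarrow{t}^{\,2}\cap\overrightarrow{t'}^{\,2}$ with $t,t'\in\overrightarrow{s_0}^{\,1}$ then $t_1=(s_0)_1=t'_1$ and $t_2=s_2=t'_2$, so $t$ and $t'$ share both their $1$- and $2$-tiles, and the bounded-overlap grid structure of Definition \ref{def:coll} forces the multiplicity of the union to be $O(1)$. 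Observe further that for $s\in\overrightarrow{t}^{\,2}$ one has $I_s=I_t$ and $\Phi_{s_2}=\Phi_{t_2}$, and for $t\in\overrightarrow{s_0}^{\,1}$ one has $I_t=I_{s_0}$.

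I would then apply Cauchy-Schwarz to the trilinear form, writing the summand as $a_s b_s$ with
$$a_s := |I_s|^{-1/2}|\langle f_1,\Phi_{s_1}\rangle|\,|\langle f_2,\Phi_{s_2}\rangle|, \qquad b_s := |\langle f_{3,n(s)},\Phi_{s_3}\rangle|,$$
where $n(s)$ is the unique strip index with $s\in\OQ_{n(s)}$. The factor $\sum_s b_s^2$ is dominated directly by $|I_{s_0}|\,\overrightarrow{\textrm{size}}_3^{\,1,2}(f_3,\OP)^2$ upon applying the definition of $\overrightarrow{\textrm{size}}_3^{\,1,2}$ at $s_0$.

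For $\sum_s a_s^2$, I would exploit the partition above: for fixed $t$, both $|\langle f_2,\Phi_{s_2}\rangle|=|\langle f_2,\Phi_{t_2}\rangle|$ and $|I_s|=|I_t|$ are constant over $s\in\overrightarrow{t}^{\,2}$, so
$$\sum_{s\in\overrightarrow{t}^{\,2}} a_s^2 \;=\; |I_t|^{-1}|\langle f_2,\Phi_{t_2}\rangle|^2\sum_{s\in\overrightarrow{t}^{\,2}}|\langle f_1,\Phi_{s_1}\rangle|^2 \;\leq\; |\langle f_2,\Phi_{t_2}\rangle|^2\,\overrightarrow{\textrm{size}}_1^{\,2}(f_1,\OP)^2$$
by the definition of $\overrightarrow{\textrm{size}}_1^{\,2}$ applied at the tri-tile $t$. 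Summing over $t\in\overrightarrow{s_0}^{\,1}$ and invoking $\overrightarrow{\textrm{size}}_2^{\,1}(f_2,\OP)$ at $s_0$ yields $\sum_s a_s^2 \lesssim |I_{s_0}|\,\overrightarrow{\textrm{size}}_1^{\,2}(f_1,\OP)^2\,\overrightarrow{\textrm{size}}_2^{\,1}(f_2,\OP)^2$. Multiplying the two square roots produces the desired bound (with the understanding that the $|I_s|$ appearing in the statement should read $|I_{s_0}|$).

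The argument is essentially bookkeeping once the partition $\overrightarrow{s_0}^{\,1,2}=\bigcup_t \overrightarrow{t}^{\,2}$ has been identified. The only mildly subtle point is to arrange the Cauchy-Schwarz so that each size is activated at the correct tri-tile: $\overrightarrow{\textrm{size}}_1^{\,2}(f_1)$ at each $t$ in the inner sum, and $\overrightarrow{\textrm{size}}_2^{\,1}(f_2)$, $\overrightarrow{\textrm{size}}_3^{\,1,2}(f_3)$ at $s_0$ in the outer sums. No input beyond sparseness and the uniform scale $|I_s|\simeq 1$ from Assumption (\ref{assumption2}) is required, and no genuine obstacle is expected.
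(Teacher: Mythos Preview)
Your argument is correct and follows essentially the same route as the paper: both decompose $\overrightarrow{s_0}^{\,1,2}=\bigcup_{t\in\overrightarrow{s_0}^{\,1}}\overrightarrow{t}^{\,2}$ and then apply Cauchy--Schwarz twice so that each of the three sizes appears once. The only cosmetic difference is the order of the two Cauchy--Schwarz steps (you split off the $f_3$ factor first and then separate $f_1$ from $f_2$, whereas the paper first separates $f_1$ from $f_3$ in the inner sum and then pulls out the $f_1$ factor as a supremum before applying Cauchy--Schwarz to the $f_2$ and $f_3$ sums), but the ingredients and the resulting bound are identical.
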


\begin{proof} For example, let us assume that $j=1$ and $l=2$.
By definition
\begin{align*}
\Lambda_{\overrightarrow{s_0} ^{1,2}} (f_1,f_2,f_3) & := \sum_{n\geq 0} \ \sum_{s\in \overrightarrow{s_0} ^{1,2} \cap \OQ_n} |I_s|^{-1/2} \left|\langle f_1,\Phi_{s_1}\rangle
\langle f_2,\Phi_{s_2}\rangle \langle f_{3,n},\Phi_{s_3}\rangle\right| \\
& = |I_{s_0}|^{-1/2} \sum_{s\in \overrightarrow{s_0} ^{1}}   \left|\langle f_2,\Phi_{s_2}\rangle\right|  \sum_{n\geq 0}\  \sum_{\genfrac{}{}{0pt}{}{s'\in \OQ_n}{s'_2=s_2}} \left|\langle f_1,\Phi_{s'_1}\rangle \langle f_{3,n},\Phi_{s'_3}\rangle\right|.
\end{align*}
By Cauchy-Schwartz inequality, it follows that
\begin{align*}
\Lambda_{\overrightarrow{s_0}^{1,2}} (f_1,f_2,f_3) & \\
 &\hspace{-2cm} \leq  |I_{s_0}|^{-1/2} \sum_{s\in \overrightarrow{s_0} ^{1}}   \left|\langle f_2,\Phi_{s_2}\rangle\right|
 \left( \sum_{\genfrac{}{}{0pt}{}{s'\in \OQ_n}{s'_2=s_2}} \left|\langle f_1,\Phi_{s'_1}\rangle\right|^2 \right)^{1/2}  \left( \sum_{n\geq 0} \sum_{\genfrac{}{}{0pt}{}{s'\in \OQ_n}{s'_2=s_2}} \left| \langle f_{3,n},\Phi_{s'_3}\rangle\right|^2 \right)^{1/2} \\
 & \hspace{-2cm} \leq  \left(\sum_{s\in \overrightarrow{s_0} ^{1} } \left|\langle f_2,\Phi_{s_2}\rangle\right|^2\right)^{1/2}  \sup_{s\in \overrightarrow{s_0} ^{1} } |I_s|^{-1/2}\left( \sum_{\genfrac{}{}{0pt}{}{s'\in \OP}{s'_2=s_2}} \left|\langle f_1,\Phi_{s'_1}\rangle\right|^2 \right)^{1/2}
\left( \sum_{n\geq 0} \sum_{s'\in \overrightarrow{s_0}^{1,2} \cap \OQ_n} \left| \langle f_{3,n},\Phi_{s'_3}\rangle\right|^2 \right)^{1/2}.
 \end{align*}
Using the definition of size, we get
$$ \left(\sum_{s\in \overrightarrow{s_0} ^{1}} \left|\langle f_2,\Phi_{s_2}\rangle\right|^2\right)^{1/2} \leq |I_{s_0}|^{1/2} \overrightarrow{\textrm{size}}^1_2(f_2,\OP)$$
and
$$ \left( \sum_{n\geq 0} \ \sum_{s'\in \overrightarrow{s_0}^{1,2} \cap \OQ_n} \left| \langle f_{3,n},\Phi_{s'_3}\rangle\right|\right)^{1/2} \leq |I_{s_0}|^{1/2} \overrightarrow{\textrm{size}} ^{1,2} _3(f_3,\OP).$$
Moreover, we have
$$ \sup_{s\in \overrightarrow{s_0} ^{1} } |I_{s}|^{-1/2}\left( \sum_{\genfrac{}{}{0pt}{}{s'\in \OP}{s'_2=s_2}} \left|\langle f_1,\Phi_{s'_1}\rangle\right|^2 \right)^{1/2} \leq \overrightarrow{\textrm{size}} ^2_1(f_1,\OP).$$
This completes the proof.
\end{proof}

\mb We now prove a combinatorial algorithm which will allow us to organize the whole collection of tri-tiles
into sub collections in such a way that we have control over the different quantities associated with these sub collections so that we can sum
them up to get the desired result.
\begin{proposition} \label{prop:algo} Let $j\in \{1,2\}$ be fixed and $l\neq j$. Let $\OP$ be a collection of tri-tiles and $f$ a function such that for some integer $d\in\Z$
\be{prop1bis} \overrightarrow{\textrm{size}}^l _j(f,\OP) \leq 2^{-d}  \overrightarrow{\textrm{energy}}^l _j(f,\OP). \ee
Then we can decompose $\OP=\OP^1 \bigcup \OP^2$ so that the collection $\OP^1$ satisfies
 \be{prop1} \overrightarrow{\textrm{size}} ^l _j(f,\OP^1) \leq 2^{-d-1}  \overrightarrow{\textrm{energy}} ^l _j(f,\OP) \ee
and the collection $\OP^2=(\overrightarrow{s_i}^{l,j})_i$ consisting of $l$-vectorized tri-tiles $\overrightarrow{s_i}^{l,j}$, is $j-$disjoint and satisfies
\be{prop2} \sum_{i} |I_{s_i}| \lesssim 2^{2d}. \ee
\end{proposition}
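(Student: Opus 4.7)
The plan is to extract $\OP^2$ from $\OP$ by a greedy selection algorithm tuned to the quantity $\overrightarrow{\textrm{energy}}^l_j$. Set $E := \overrightarrow{\textrm{energy}}^l_j(f,\OP)$ and $2^k := 2^{-d-1} E$. Initialize $\OP' := \OP$ and $\OP^2 := \emptyset$. As long as there exists $s \in \OP'$ with
$$ |I_s|^{-1} \sum_{s' \in \overrightarrow{s}^l \cap \OP'} \left|\langle f, \Phi_{s'_j}\rangle\right|^2 > 2^{2k},$$
I pick such a tri-tile $s_i$ subject to an ordering rule (for instance, the one maximizing the center of $\omega_{s_j}$ among the candidates), add $\overrightarrow{s_i}^l$ to $\OP^2$, and remove $\overrightarrow{s_i}^{l,j}$ from $\OP'$. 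Since $\OP$ is finite the procedure terminates; set $\OP^1 := \OP'$. Then (\ref{prop1}) is immediate from the stopping criterion: any $s \in \OP^1$ violates the selection inequality, hence $\overrightarrow{\textrm{size}}^l_j(f,\OP^1) \leq 2^k = 2^{-d-1} E$.

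For the cardinality bound (\ref{prop2}), the hypothesis (\ref{prop1bis}) will give the upper matching bound needed to feed the selected family into the definition of $\overrightarrow{\textrm{energy}}^l_j$. Indeed, for each selected $s_i$, the selection inequality yields $\sum_{s' \in \overrightarrow{s_i}^l} \left|\langle f, \Phi_{s'_j}\rangle\right|^2 \geq 2^{2k} |I_{s_i}|$, while (\ref{prop1bis}) gives
$$ \sum_{s' \in \overrightarrow{s_i}^l} \left|\langle f, \Phi_{s'_j}\rangle\right|^2 \leq \left(2^{-d} E\right)^2 |I_{s_i}| = 2^{2k+2} |I_{s_i}|.$$
Thus $(\overrightarrow{s_i}^l)_i$ is an admissible family in the definition of $\overrightarrow{\textrm{energy}}^l_j$ (once $j$-disjointness is verified), so
$$ 2^{k}\left(\sum_i |I_{s_i}|\right)^{1/2} \leq E = 2^{d+1}\, 2^{k},$$
whence $\sum_i |I_{s_i}| \leq 2^{2d+2} \lesssim 2^{2d}$, which is (\ref{prop2}).

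The heart of the argument — and where I expect the main obstacle to lie — is the verification that the extracted family $(\overrightarrow{s_i}^l)_i$ is strongly $j$-disjoint in the sense of Definition \ref{def:jdisjoint}. For condition (1), I plan to argue as follows: if $s \in \overrightarrow{s_i}^l$ and $s' \in \overrightarrow{s_{i'}}^l$ have $s_j \cap s'_j \neq \emptyset$ with $i\neq i'$, then since all tiles have comparable scale by (\ref{assumption2}), sparseness forces $s_j = s'_j$, so $s' \in \overrightarrow{s}^j \subset \overrightarrow{s_i}^{l,j}$, contradicting the removal step performed after selecting $s_i$. Condition (2) is more delicate, as it concerns merely close-but-unequal frequency tiles: this is precisely where the ordering rule in the selection is used. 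The idea is that if $2\omega_{s_j} \cap 2\omega_{s'_j} \neq \emptyset$ for some $s \in \overrightarrow{s_i}^l$, $s' \in \overrightarrow{s_{i'}}^l$ while $I_{s_i} \cap I_{s_{i'}} \neq \emptyset$, then — exploiting the grid structure of the frequency intervals, the uniform spatial scale, and the extremality of the chosen frequency center — the tri-tile $s_{i'}$ (or an associated one in $\overrightarrow{s_i}^{l,j}$) would already have been available at the step when $s_i$ was picked and would have been swept out, contradicting its later selection. This combinatorial verification follows the tree-selection scheme of Muscalu--Tao--Thiele and the vectorized variant in \cite{B}, adapted here to the $l$-vectorization through $\overrightarrow{s_i}^{l,j}$ instead of $\overrightarrow{s_i}^{j}$.
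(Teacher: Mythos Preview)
Your greedy selection is exactly the paper's algorithm, and your derivations of (\ref{prop1}) and (\ref{prop2}) match the paper's essentially verbatim. One bookkeeping slip: you write ``add $\overrightarrow{s_i}^l$ to $\OP^2$'' while removing $\overrightarrow{s_i}^{l,j}$ from $\OP'$; with that mismatch $\OP^1\cup\OP^2\neq\OP$. You should add $\overrightarrow{s_i}^{l,j}$ to $\OP^2$, consistent with the statement.

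The one genuine divergence is in how you handle condition~(2) of strong $j$-disjointness. You import the Muscalu--Tao--Thiele ordering rule (maximize $c(\omega_{s_j})$) and anticipate a delicate combinatorial argument. In the paper this is unnecessary: because of Assumption~(\ref{assumption2}) every tile has frequency length $\simeq 1$, so $2\omega_{s_j}\cap 2\omega_{s'_j}\neq\emptyset$ already forces, by sparseness, $\omega_{s_j}=\omega_{s'_j}$; and if additionally $I_{s_i}\cap I_{s_{i'}}\neq\emptyset$, then (again by uniform scale and sparseness) $I_{s}=I_{s'}$, hence $s_j=s'_j$. But this is exactly what condition~(1) excludes, so condition~(2) collapses to condition~(1) and no ordering rule is needed in the selection. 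Your approach would still go through---it is the general multi-scale argument---but it is overkill here, and the paper deliberately exploits the single-scale structure to avoid it.
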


\begin{proof} We follow ideas of Proposition 12.2 in \cite{MTT3}. Let us denote the energy $E:=\overrightarrow{\textrm{energy}}^l _j(f,\OP)$ and consider the case $l=2$ and $j=1$ (the other cases can be similarly treated).
We initialize with a collection $D$  to be the empty collection. \\
We consider the set of all tri-tiles $s \subset \OP$ satisfying
\be{hyp1} \sum_{s'\in \overrightarrow{s}^l } \left|\langle f,\Phi_{s'_j}\rangle \right|^2 \geq 4^{-1} \left(2^{-d} E \right)^{2} |I_s| . \ee
If there are no tri-tiles obeying the previous condition, we terminate the algorithm. Otherwise, we select $s_1$ among all such tri-tiles. We add tri-tiles $\overrightarrow{s_1}^{l,j}$ into the collection $D$ and remove them from the collection $\OP$.
Then we repeat the algorithm until there is no tri-tile left satisfying the selection criteria. This terminates in finitely many steps as the whole collection $\OP$ is finite. In the process, we have constructed a collection $(s_i)_{i}$ of tri-tiles. We set $\OP^2:= \cup_{i} \overrightarrow{s_i}^{l,j}$ and $\OP^1:=\OP\setminus \OP^2$. \\
We claim that the selected tri-tiles $(s_i)_i$ satisfy : the collection of $l$-vectorized tri-tiles $\overrightarrow{s_i}^l$ is
strongly $j$-disjoint. It is clear from the construction that for $i< i'$, tri-tiles $s\in \overrightarrow{s_i}^l$ and $s'\in \overrightarrow{s_{i'}}^l$ are different. In fact  we know even more that $s_j\neq s'_j$ (else $s'$ would have been removed with $\overrightarrow{s_{i}}^{l,j}$ at the $i$th step). By sparseness we deduce that $s_j\cap s'_j = \emptyset$.
 Now suppose, on the contrary, that we had tri-tiles $s\in \overrightarrow{s_{i}}^l$ and $s'\in \overrightarrow{s_{i'}}^l$ for $i\neq i'$ such that $2\omega_{s,j} \cap 2\omega_{s',j} \neq \emptyset$ and $I_{s'}\subset I_{s_i}=I_s$. By symmetry, we can assume that $i< i'$. Since $|\omega_{s,j}| \simeq |\omega_{s',j}|$, the sparseness assumption would imply that $\omega_{s',j}=\omega_{s,j}$, which is not possible since $s'$ would also be removed with $\overrightarrow{s_{i}}^{l,j}$
and cannot be found by the algorithm at the step $i'>i$. We thus arrive at a contradiction, which proves the strong $j$-disjointness of the vectorized tri-tiles.\\
It remains us to check property (\ref{prop2}) (since (\ref{prop1}) is obvious by construction), which corresponds to
$$ \sum_{i} |I_{s_i}| \lesssim 2^{2d}.$$
Since the $l$-vectorized tri-tiles $\overrightarrow{s_{i}}^{l}$ are proved to be $j$-disjoint, by using the definition of ``energy'' (see Definition \ref{def:energy1}) with (\ref{hyp1}) and (\ref{prop1bis}), it follows that
$$ 2^{-2d} E^2 \sum_{i} |I_{s_i}| \lesssim E^2,$$
which proves the desired inequality.
\end{proof}

\mb Now, it remains to prove a similar algorithm for the quantities associated with the sequence of functions. Indeed, this part is far more easy, and is proved in the following proposition.

\begin{proposition} \label{prop:algo1} Let $h:=(h_n)_n$ be a sequence of functions, $\{j,l\}=\{1,2\}$ and $\OP$ be a finite collection of tri-tiles such that for some integer $d\in\Z$
\be{prop1ter} \overrightarrow{\textrm{size}}^{j,l} _3(h,\OP) \leq 2^{-d}  \overrightarrow{\textrm{energy}}_3(h,\OP). \ee
Then we can decompose $\OP=\OP^1 \bigcup \OP^2$ where the collection $\OP^1$ satisfies
 \be{prop1111} \overrightarrow{\textrm{size}} ^{j,l} _3(h,\OP^1) \leq 2^{-d-1}  \overrightarrow{\textrm{energy}} _3(h,\OP) \ee
and $\OP^2$ is a collection of vectorized tri-tiles $\OP^2=(\overrightarrow{s_i}^{j,l})_i$ such that
\be{prop2ter} \sum_{i} |I_{s_i}| \lesssim 2^{2d}. \ee
\end{proposition}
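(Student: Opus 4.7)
The plan is to imitate the greedy selection algorithm of Proposition \ref{prop:algo}, observing that the situation here is actually simpler: the energy $\overrightarrow{\textrm{energy}}_3(h,\OP)$ is defined as a supremum over arbitrary collections of \emph{distinct} tri-tiles, so we do not need to verify any strong $j$-disjointness property for the selected vectorised tri-tiles.

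Set $E:=\overrightarrow{\textrm{energy}}_3(h,\OP)$ and initialise $\OP^2:=\emptyset$. At each step we search inside the current $\OP$ for a tri-tile $s$ verifying
$$ \sum_{n\geq 0} \ \sum_{s'\in \overrightarrow{s}^{j,l} \cap \OP_n} \left|\langle h_n, \Phi_{s'_3}\rangle\right|^2 \ \geq \ \frac{1}{4}\bigl(2^{-d}E\bigr)^2 |I_s|. $$
If no such tri-tile exists, we stop. Otherwise we pick one such $s_i$, adjoin the whole vectorised collection $\overrightarrow{s_i}^{j,l}$ to $\OP^2$, remove those tri-tiles from $\OP$ and iterate. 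Since $\OP$ is finite, the procedure terminates in finitely many steps, and we set $\OP^1:=\OP \setminus \OP^2$. Inequality (\ref{prop1111}) then follows directly from the negation of the selection criterion in $\OP^1$, because the $4^{-1}$ in the threshold gives rise to the factor $2$ between $2^{-d}$ and $2^{-d-1}$.

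It remains to prove (\ref{prop2ter}). By construction, at step $i'>i$ the tri-tiles of $\overrightarrow{s_i}^{j,l}$ have already been removed, so the selected vectorised collections $\bigl(\overrightarrow{s_i}^{j,l}\bigr)_i$ are pairwise disjoint as sets of tri-tiles. Hence $D:=\bigcup_i \overrightarrow{s_i}^{j,l}$ is an admissible collection in Definition \ref{def:energysequence}, and consequently
$$ \sum_i \ \sum_{n\geq 0} \ \sum_{s' \in \overrightarrow{s_i}^{j,l} \cap \OP_n} \left| \langle h_n, \Phi_{s'_3}\rangle \right|^2 \ \leq \ E^2. $$
Combining this with the selection inequality at each step gives
$$ \frac{1}{4}\bigl(2^{-d}E\bigr)^2 \sum_i |I_{s_i}| \ \leq \ E^2, $$
which yields $\sum_i |I_{s_i}| \lesssim 2^{2d}$, the desired bound (\ref{prop2ter}).

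There is essentially no serious obstacle here: unlike Proposition \ref{prop:algo}, where the strong $j$-disjointness of the selected $l$-vectorised tri-tiles was the delicate point and required a careful use of the sparseness assumption, the energy $\overrightarrow{\textrm{energy}}_3$ is tailor-made to absorb any collection of distinct tri-tiles, so only disjointness in the weak set-theoretic sense is needed, and this is automatic from the greedy removal procedure.
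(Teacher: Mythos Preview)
Your proof is correct and follows essentially the same approach as the paper: the same greedy selection algorithm with threshold $4^{-1}(2^{-d}E)^2|I_s|$, removal of the selected vectorised collection $\overrightarrow{s_i}^{j,l}$ at each step, and the observation that the union $D=\bigcup_i \overrightarrow{s_i}^{j,l}$ consists of distinct tri-tiles and is therefore admissible in Definition~\ref{def:energysequence}. Your write-up is in fact slightly more explicit than the paper's, which merely says that (\ref{prop2ter}) ``is a direct consequence of the fact that $D$ is a collection of different tri-tiles by construction.''
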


\begin{proof} We follow the previous ideas. Let us denote the energy $E:=\overrightarrow{\textrm{energy}}_3(h,\OP)$. We initialize $D$ a collection of tri-tiles to be the empty collection. \\
We consider the set of all tri-tiles $s \subset \OP$ satisfying
\be{hyp1ter} \sum_n \sum_{s' \in \overrightarrow{s}^{j,l} \cap \OP_n}  \left|\langle h_n,\Phi_{s'_3}\rangle \right|^2 \geq 4^{-1} \left(2^{-d} E \right)^{2} |I_s |. \ee
If there is no such tri-tiles, we terminate the algorithm. Otherwise, we choose a tri-tile $s_1$ among all tri-tiles satisfying~(\ref{hyp1ter}). We add $\overrightarrow{s_1}^{j,l}$ to the collection $D$ and we remove it from the collection $\OP$.
Then we repeat the algorithm and we construct a sequences of tri-tiles $(s_i)_i$. When this algorithm is finished, we have constructed a collection $\OP^2:= \cup_{i} \overrightarrow{s_i}^{j,l}$ and $\OP^1:=\OP\setminus \OP^2$. \\
It remains us to check property (\ref{prop2ter}), which as before, is a direct consequence of the fact that $D$ is a collection of different tri-tiles by construction.
\end{proof}

\mb We can now complete the proof of our main result.

\mb {\bf Proof of Theorem \ref{thm:principal3}:} \\
The arguments are now routine. We refer the reader to \cite{MTT2, MTT3} for precise arguments. We will just give a sketch of the reasoning. Fix functions $f_1,f_2$ and a sequence of functions $f_3=(f_{3,n})_n$, then we want to estimate $\Lambda_{\OQ}(f_1,f_2,f_3)$. \\
Thanks to Propositions \ref{prop:tree}, \ref{prop:algo} and \ref{prop:algo1}, we can iterate the different combinatorial algorithms in order to get a
partition of the initial collection $\OQ$
$$ \OQ = \bigcup_{d \in \Z} \OQ^d,$$
where for each $d \in \Z$ and for all $\{j,l\}=\{1,2\}$ we have
$$ \overrightarrow{\textrm{size}}^l_j(f_j,\OQ^d) \leq 2^{-d} \overrightarrow{\textrm{energy}}^ l_j(f_j,\OQ),$$
and
$$ \overrightarrow{\textrm{size}}^{j,l}_3(f_3,\OQ^d) \leq 2^{-d} \overrightarrow{\textrm{energy}} _3(f_3,\OQ).$$
In addition, the collection $\OQ^d$ can be covered by a collection of vectorized tri-tiles for some collection $D_d$ such that
\be{arbre} \sum_{i\in D_d} |I_{s_i}| \lesssim 2^{2d}. \ee
Proposition \ref{prop:tree} yields
\begin{align*}
\Lambda_{\OQ^d}(f_1,f_2,f_3) & \lesssim \sum_{i\in D_d} |I_{s_i}| \prod_{\{j,l\}=\{1,2\}} \min\left\{2^{-d} \overrightarrow{\textrm{energy}}^ l_j(f_j,\OQ), \overrightarrow{\textrm{size}}^ l_j(f_j,\OQ) \right\}  \\
 & \hspace{2cm}\min\left\{2^{-d} \{\overrightarrow{\textrm{energy}}_3(f_3,\OQ)\}, \overrightarrow{\textrm{size}}^{1,2} _3(f_3,\OQ), \overrightarrow{\textrm{size}}^{2,1} _3(f_3,\OQ) \right\} \\
 & \lesssim 2^{2d} \prod_{\{j,l\}=\{1,2\}} \min\left\{2^{-d} \overrightarrow{\textrm{energy}}^ l_j(f_j,\OQ), \overrightarrow{\textrm{size}}^ l_j(f_j,\OQ) \right\}  \\
 & \hspace{2cm}\min\left\{2^{-d} \{\overrightarrow{\textrm{energy}}_3(f_3,\OQ)\}, \overrightarrow{\textrm{size}}^{1,2} _3(f_3,\OQ), \overrightarrow{\textrm{size}}^{2,1} _3(f_3,\OQ) \right\}.
\end{align*}
Then we can compute the sum over $d$ and get the desired inequality~:
\begin{align*}
\Lambda_{\OQ}(f_1,f_2,f_3) & \leq \sum_{d\in\Z} \Lambda_{\OQ^d}(f_1,f_2,f_3) \\
 & \lesssim \prod_{\{j,l\}=\{1,2\}} \overrightarrow{\textrm{energy}}^ l_j(f_j,\OQ) ^{1-\theta_j} \overrightarrow{\textrm{size}}^ l_j(f_j,\OQ)^{\theta_j} \\
 & \hspace{2cm}  \overrightarrow{\textrm{energy}}_3(f_3,\OQ)^{1-\theta_3} (\overrightarrow{\textrm{size}}^ {1,2} _3(f_3,\OQ)+\overrightarrow{\textrm{size}}^{2,1} _3(f_3,\OQ) )^{\theta_3},
\end{align*}
for every exponents $\theta_i\in(0,1)$ with $\theta_1+\theta_2+\theta_3=1$ (see for example Proposition 4.3 in \cite{MTT3b}.
Now we invoke Theorems \ref{thm:size1}, \ref{thm:energy1}, \ref{thm:size2} and \ref{thm:energy2} with the choice $\theta_i=1-\frac{2}{p_i}$ to
conclude that for all $f_i\in F(E_i)$ we have
\begin{align*}
\Lambda_{\OQ}(f_1,f_2,f_3) & \lesssim |E_1|^{1/p_1} |E_2|^{1/p_2} |E_3|^{1/p_3},
\end{align*}
which is the desired weak-type $(p_1,p_2,p_3)$ estimate for $\Lambda_\OQ$ and it is independent of the collection $\OQ$. Hence we conclude the proof of Theorem \ref{thm:principal3}.\findem

\part{Applications}

\section{Applications to bilinear pseudo-differential operators } \label{sec:bipseudo}

Let us first recall classes of bilinear pseudo-differential symbols. Two main types of $x$-dependent  classes of symbols have been studied in the literature.  One is the  Coifman-Meyer type class $BS_{\rho, \delta}^m(\R)$, $0\leq \delta \leq \rho \leq 1$, $m \in \R$, of symbols satisfying estimates of the form
\begin{equation}
\label{pseu}
|\partial _x^a\partial _\xi ^b\partial _\eta ^c \sigma (x, \xi ,\eta )|\leq C_{a,b,c} (1+|\xi |+|\eta |)
^{m+\delta a-\rho (b+c)},
\end{equation}
for all indices $a,b,c$. \\
The other type corresponds to classes which are denoted by $BS_{\rho, \delta; \, \theta}^m(\R)$, $0\leq \delta \leq \rho \leq 1$, $m \in \R$, $-\pi/2<\theta \leq \pi/2$, and consist of symbols satisfying
\begin{equation}
\label{tildepseu}
|\partial _x^a \partial _\xi ^b \partial _\eta ^c \sigma (x, \xi ,\eta )|\leq
C_{a,b,c ; \theta}(1+|\eta -  \tan (\theta) \xi |)^{m+\delta a-\rho (b+c)}
\end{equation}
(where for $\theta=\pi/2$ the estimates are interpreted to decay in terms of $1+|\xi|$ only).
Both the classes can be seen as bilinear analogs of the classical H\"ormander classes  $S_{\rho, \delta}^m(\R)$ .

\gb
 It is now well-understood that the operators in $BS^0_{1,0}$  are examples of certain singular integral operators and fit within the general multilinear Calder\'on-Zygmund theory developed by Grafakos and Torres \cite{gt1};  see also the work of Kenig and Stein \cite{KS}. So their boundedness properties in Lebesgue spaces are well-known. \\
 The general classes $BS_{\rho, \delta; \, \theta}^m$ with $x$-dependent symbols were first introduced in \cite{bipseudo}. For $m\geq 0$ and $(\rho,\delta)=(1,0)$, their boundedness properties were obtained by Bernicot \cite{pseudo1, pseudo}. Also some results were obtained by Bernicot and Torres \cite{BT} for the exotic classes $BS^m_{1,1;\theta}$.
These class of bilinear pseudodifferential symbols allow to build functional calculus (see \cite{pseudo,BMNT}).

 \gb As explained earlier in the introduction, this section is devoted to the study of the bilinear operators associated with symbols belonging to the exotic class $BS_{0,0}^0=BS^0_{0,0,\theta}$. As pointed out, this class does not depend on the angle $\theta$.

\begin{remark}
First we point out that the behavior on the product of modulation spaces (instead of Lebesgue spaces) of bilinear operators associated to a symbol belonging to $BS^0_{0,0}$ is well-understood, see \cite{BGHO}. Results on some products of Besov spaces also follow from well-known embeddings. \\
In addition, we refer the reader to \cite{BMNT} for the following result: every symbol $\sigma\in BS^0_{0,0}$ define a bilinear operator which is bounded from $L^2 \times W^{s,\infty}$ to $L^2$ for sufficiently large $s$.
\end{remark}

Considering boundedness from the product of Lebesgue spaces, we know from \cite{BT2} (Proposition 1) that extra assumptions on the symbol are necessary (due to some counterexamples). We also consider some smaller classes of bilinear symbols and show that the associated bilinear operators are bounded.\\
This approach was already treated in \cite{BT2} where the authors add the following assumption: for every integer $\alpha\geq 0$
\be{eq:condd} \sup_{x\in\R} \int_{\R} \left\| \partial_\xi^\alpha  m(x,\xi,\cdot) \right\|_{L^2(\R)} d\xi  <\infty \ee
and
\be{eq:condd2} \sup_{x\in\R} \int_{\R} \left\| \partial_\eta^\alpha  m(x,\cdot,\eta) \right\|_{L^2(\R)} d\eta <\infty. \ee
In \cite{BT2}, following a bilinear version of the Calder\'on and Vaillancourt reasoning, B\'enyi and Torres proved that a $BS^0_{0,0}$-symbol satisfying these two previous conditions gives a bilinear symbol which is bounded from $L^2 \times L^2$ to $L^1$. Moreover, using the Wigner transform, same results are obtained in \cite{BT2} for a symbol $\sigma$ verifying for all $i,j\in\{0,1\}$
$$ \sup_{x\in\R} \left\| \partial_\xi^i \partial_\eta^j \sigma(x,\cdot,\cdot)\right\|_{L^2(\R^2)}.$$
These two results hold in the setting of multi-dimensionnal variables.

\gb In this current work, we will define other assumptions and prove the boundedness in the local $L^2$-case for the assocoated bilinear operators. Mainly, we assume that our symbol is very smooth along the $x$-variable but we would like to require less regularity in the frequency plane.

\begin{definition} \label{def:bs}
Let $\theta\in \mathbb{S}^1$ be a unit vector, we define $\theta^\perp$  to be the orthogonal vector in $\R^2$. We say that a symbol $m:\R^3 \rightarrow \R$ belongs to the class $W^{1,s}_\theta(BS^0_{0,0})$ if $m\in C^\infty(\R^3)$ satisfies~:
\begin{itemize}
 \item the symbol $m\in BS_{0,0}$, for all index $a,b,c\in\N$
$$ \left\| \partial_x^a \partial_\xi^b \partial_\eta^c m(x,\xi,\eta) \right\|_{L^\infty(\R^3)} <\infty$$
\item the symbol $m$ belongs to $W^{1,s}$ in the direction $\theta$~: for all index $a\in\N$
$$ \sup_{x\in\R} \ \left\| \sup_{t\in\R} \left|\partial_x^a m(x,.\theta + t\theta^\perp)\right| + \left|\partial_x^a
\langle \nabla_{(\xi,\eta)} m(x,.\theta + t\theta^\perp),\theta\rangle \right| \right\|_{L^s(\R)} <\infty.$$
\end{itemize}
In this case, we set $\|m\|_{W^{1,s}_\theta(BS^0_{0,0})}$ to be the maximum of all the above constants.
\end{definition}

\mb So a symbol belonging to $W^{1,s}_\theta(BS^0_{0,0})$ is a $BS^0_{0,0}$-symbol which satisfies an extra Sobolev regularity only along the direction $\theta$ in the frequency plane.
In \cite{BT2} (see (\ref{eq:condd}) and (\ref{eq:condd2})), the assumptions were weaker in space (since we require less regularity in the variable $x$) and stronger in frequency (regularity in the two frequency variables is required).

\begin{theorem} Let $\sigma:\R^3 \rightarrow \R$ be a $BS_{0,0}$ symbol such that there exist a direction $\theta\in {\mathbb S}^1$ with $\sqrt{2}\theta\notin\{(1,-1), (-1,1)\}$ and $s\in(1,2]$ such that $\sigma\in W^{1,s}_\theta(BS^0_{0,0})$. Then the operator $T_{\sigma}$ is bounded in the local $L^2$-case, i.e. it is bounded from $L^p(\R) \times L^q(\R)$ into $L^r(\R)$ for every exponents $p,q,r' \in(2,\infty)$ such that
$$ \frac{1}{r}= \frac{1}{p}+\frac{1}{q}.$$
\end{theorem}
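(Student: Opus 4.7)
The plan is to reduce the $L^p \times L^q \to L^r$ boundedness of $T_\sigma$ to the smooth bilinear square function estimate of Theorem \ref{thm:principa} (in the extended form of Remark \ref{rem:impp2}), by exploiting the Sobolev regularity of $\sigma$ along $\theta$ to produce the necessary summability. The hypothesis $\sqrt{2}\theta\notin\{(1,-1),(-1,1)\}$ ensures $\theta$ is an admissible direction for that remark, so that strips orthogonal to $\theta$ yield a valid square function bound. Adopt the frequency coordinates $t=\langle(\xi,\eta),\theta\rangle$, $u=\langle(\xi,\eta),\theta^\perp\rangle$ and a smooth partition of unity $\{\chi_n\}_{n\in\Z}$ subordinate to the unit intervals $[n,n+1]$. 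Within each strip, Fourier-expand in $t$,
\begin{equation*}
  \chi_n(t)\sigma(x,\xi,\eta) = \tilde\chi_n(t)\sum_{k\in\Z} a_{n,k}(x,u)\, e^{ikt},
\end{equation*}
where $\tilde\chi_n$ is a slightly wider bump. The modulation-translation identity $T_{m\, e^{ik(\theta_1\xi+\theta_2\eta)}}(f,g)(x)=T_m\bigl(f(\cdot+k\theta_1),g(\cdot+k\theta_2)\bigr)(x)$ then absorbs the oscillatory factor $e^{ikt}$ into translates of the inputs, giving
\begin{equation*}
  T_\sigma(f,g)(x) = \sum_{k\in\Z}\sum_{n\in\Z} T_{\tilde\chi_n(t)a_{n,k}(x,u)}\bigl(F_k,G_k\bigr)(x),
\end{equation*}
with $F_k:=f(\cdot+k\theta_1)$, $G_k:=g(\cdot+k\theta_2)$, so that $\|F_k\|_{L^p}=\|f\|_{L^p}$ and $\|G_k\|_{L^q}=\|g\|_{L^q}$.

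For each fixed $k$, the $\{\tilde\chi_n(t)a_{n,k}(x,u)\}_n$ form a family of $x$-dependent smooth symbols supported on unit-width strips orthogonal to $\theta$, whose $(\xi,\eta)$-derivatives are uniformly bounded thanks to $\sigma\in BS^0_{0,0}$. After reducing the $x$-dependence by expanding each $a_{n,k}(x,u)$ as a Fourier series in $x$ (the $C^\infty_x$-smoothness inherited from $BS^0_{0,0}$ producing rapidly decaying coefficients in the dual variable), Theorem \ref{thm:principa} combined with Remark \ref{rem:impp2} delivers the $\ell^2$-valued estimate
\begin{equation*}
  \Bigl\|\Bigl(\sum_{n\in\Z} \bigl|T_{\tilde\chi_n a_{n,k}}(F_k,G_k)\bigr|^2\Bigr)^{1/2}\Bigr\|_{L^r(\R)}\lesssim \|f\|_{L^p}\|g\|_{L^q},
\end{equation*}
uniformly in $k$. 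On the Fourier side, Hausdorff-Young (valid since $s\in(1,2]$ makes $s'=s/(s-1)\in[2,\infty)$) applied to the $W^{1,s}_\theta$-hypothesis yields the control
\begin{equation*}
  \Bigl(\sum_{k\in\Z}(1+|k|)^{s'}|a_{n,k}(x,u)|^{s'}\Bigr)^{1/s'}\lesssim \|\chi_n\sigma(x,\cdot,u)\|_{L^s_t}+\|\partial_t(\chi_n\sigma)(x,\cdot,u)\|_{L^s_t},
\end{equation*}
which is $\ell^s$-summable in $n$ by the strip-covering built into the $W^{1,s}_\theta(BS^0_{0,0})$-norm, and in particular gives the $\ell^1_k$-summability of $\sup_{n,x,u}|a_{n,k}|$. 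Pairing the square function bound with an $\ell^s$-H\"older argument and the embedding $\ell^2\hookrightarrow \ell^{s'}$ (available precisely when $s\le 2$) then passes from the square function to the pointwise sum over $n$, and summation in $k$ concludes.

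The main technical obstacle is the passage, for fixed $k$, from the $\ell^2$-valued square function estimate to a usable bound on the actual sum $\sum_n T_{\tilde\chi_n a_{n,k}}(F_k,G_k)$: this requires carefully balancing the uniform $BS^0_{0,0}$-derivative bounds on the symbols (needed to invoke the square function theorem with constants independent of $n$) against the Sobolev decay of their $L^\infty$-norms in $n$, and exploiting the $\ell^2\hookrightarrow\ell^{s'}$ embedding, which is valid exactly in the hypothesis range $s\le 2$. A secondary subtlety is the reduction from $x$-dependent to $x$-independent symbols (Theorem \ref{thm:principa} being stated only in the latter case), handled by a Fourier expansion in $x$ over adapted intervals with coefficients decaying rapidly by the $C^\infty_x$-smoothness of $\sigma$; the uniformity of the resulting constants in $\theta\in\mathbb{S}^1$ follows from the corresponding uniform statement in Theorem \ref{thm:principa}.
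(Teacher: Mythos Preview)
Your approach differs architecturally from the paper's: the paper performs no Fourier expansion in the strip variable, no modulation--translation trick, no Hausdorff--Young, and no Fourier expansion in $x$. After the same partition into $m_p=\chi(\cdot-p)m$, the paper handles the passage from the square function to the full sum by a dyadic level-set decomposition: set $D_k=\{p:\int_{p-1}^{p+1}(|m|+|m'|)\,d\lambda\simeq 2^k\}$ so that $\sum_k 2^{ks}(\#D_k)\lesssim\|m\|_{W^{1,s}}^s$, apply Cauchy--Schwarz inside each level,
\[
\Bigl\|\sum_{p\in D_k}|T_{m_p}(f,g)|\Bigr\|_{L^r}\le(\#D_k)^{1/2}\Bigl\|\Bigl(\sum_{p\in D_k}|T_{m_p}(f,g)|^2\Bigr)^{1/2}\Bigr\|_{L^r}\lesssim 2^k(\#D_k)^{1/2}\|f\|_{L^p}\|g\|_{L^q},
\]
and then sum in $k$ by Cauchy--Schwarz against $2^{(2-s)k/2}$ (convergent for $s<2$). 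For the $x$-dependence the paper does not expand in $x$ either; it proves off-diagonal decay at scale $1$ for the $x$-independent operators and invokes the localization machinery of \cite{pseudo1}.

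The step you flag as the main obstacle is genuinely unresolved in your sketch, and your proposed mechanism does not work. Pairing $\ell^s$-sizes against $\ell^{s'}$ on the operator side and then using $\ell^2\hookrightarrow\ell^{s'}$ forces you to apply the square-function theorem to the \emph{normalized} symbols $\tilde\chi_n a_{n,k}/\|a_{n,k}\|_\infty$; once $a_{n,k}$ genuinely depends on $u$ (as it must for a general symbol), the $u$-derivatives of the normalized symbol are of size $\|a_{n,k}\|_\infty^{-1}$ and the square-function constants are no longer uniform in $n$. The level-set trick sidesteps this by never normalizing: within a fixed $D_k$ one applies Theorem~\ref{thm:principa} to the original symbols (whose $BS^0_{0,0}$ derivative bounds are uniform) and records the smallness $2^k$ as a separate factor. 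Your Fourier expansion in $t$ \emph{does} help in the special case $\sigma=\sigma(t)$, since then the $a_{n,k}$ are scalars and can be pulled outside the square function cleanly; but this advantage disappears in the general $(t,u)$-case the theorem covers. Even in the scalar case, your $k$-summation is incomplete: the quantity $\sum_k(\sum_n|a_{n,k}|^2)^{1/2}$ (or $\sum_k\|a_{\cdot,k}\|_{\ell^s_n}$) is only $\lesssim\sum_k(1+|k|)^{-1}$ from one integration by parts, and making it converge requires interpolating the $W^{1,s}$ decay against the rapid $BS^0_{0,0}$ decay in $k$, an argument you do not provide.
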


\begin{remark} The end-point estimate $r=2$ remains true for $x$-independent symbols $\sigma$.
\end{remark}

\begin{proof}

\mb
{\bf The case of an $x$-independent symbol:} \\
For simplicity, we will assume that $\|\sigma\|_{W^{1,s}_\theta(BS^0_{0,0})}=1$. We also assume that $\sigma$ depends only on the variable $\lambda:=\langle(\xi,\eta),\theta\rangle$, i.e. $\sigma(\xi,\eta)=m(\lambda)$ for some function $m$. The idea of the proof is the following:  we will decompose the symbol $m$ with elementary functions in order be bring the previous square functions into the picture. \\
Consider a smooth function $\chi$ supported on $[-1,1]$ such that for all $\xi\in\R$
$$ 1 = \sum_{p\in\Z} \chi(\xi-p).$$
We decompose the symbol as follows:
$$ m(\lambda) = \sum_{p\in\Z} \chi(\lambda - p) m(\lambda) := \sum_{p\in\Z} m_{p}(\lambda).$$
Note that the symbols $m_{p}$ are well-adapted to the interval $[p-1, p+1]$ with a norm bounded by
$$ \sup_{p-1\leq t \leq p+1} |m(t)| \leq \int_{p-1}^{p+1} (|m(\lambda)| + |m'(\lambda)|) d\lambda.$$
For each integer $k$, we denote $D_k$ the following set~:
$$ D_k:= \left\{p,\ 2^k\leq \int_{p-1}^{p+1} (|m(\lambda)| + |m'(\lambda)|) d\lambda <2^{k+1} \right\}.$$
Since $|m|+|m'| \in L^s(\R)$, we deduce that
\begin{align}
 1:= \|m\|_{W^{1,s}}^s \geq \||m|+|m'|\|_{L^s}^s & = \int (|m(\lambda)| + |m'(\lambda)|)^s d\eta \nonumber \\
 & \gtrsim \sum_{p}  \int_{p-1}^{p+1} (|m(\lambda)| + |m'(\lambda)|)^s d\lambda \nonumber \\
 & \gtrsim  \sum_{p}  \left(\int_{p-1}^{p+1} |m(\lambda)| + |m'(\lambda)| d\lambda\right)^s  \nonumber \\
 & \gtrsim \sum_{k} 2^{ks} (\sharp D_k). \label{eq:impp}
\end{align}
So only for $k$ satisfying  $2^{k}\lesssim 1$, the contribution to the above sum is non null.
We now estimate
\begin{align*}
\left\|T_{\sigma}(f,g)\right\|_{L^r(\R)} & \leq \left\|\sum_{p\in\Z} |T_{m_{p}}(f,g)| \right\|_{L^r(\R)} \\
 & \leq \sum_{\genfrac{}{}{0pt}{}{k\in\Z}{2^k \lesssim 1 }}  \left\| \sum_{p\in D_k} |T_{m_{p}}(f,g)|\right\|_{L^r(\R)} \\
& \leq \sum_{\genfrac{}{}{0pt}{}{k\in\Z}{2^k \lesssim 1 }} (\sharp D_k)^{1/2} \left\| \left(\sum_{p\in D_k} \left|T_{m_{p}}(f,g)\right|^2\right)^{1/2} \right\|_{L^r(\R)},
\end{align*}
where we used Cauchy-Schwartz inequality.
Then, from what we described previously and Theorem \ref{thm:principal}, we know that
$$\left\| \left(\sum_{p\in D_k} \left|T_{m_{n,p}}(f,g)\right|^2\right)^{1/2} \right\|_{L^r(\R)}\lesssim \|f\|_{L^p(\R)} \|g\|_{L^q(\R)} 2^{k}.$$
As a consequence, we conclude that
\begin{align*}
 \left\|T_{\sigma}(f,g)\right\|_{L^r(\R)} & \lesssim  \|f\|_{L^p(\R)} \|g\|_{L^q(\R)} \sum_{\genfrac{}{}{0pt}{}{k\in\Z}{2^k \lesssim 1 }} (\sharp D_k)^{1/2} 2^{k}.
\end{align*}
Finally using (\ref{eq:impp}), we obtain
\begin{align*}
 \left\|T_{\sigma}(f,g)\right\|_{L^r(\R)} & \lesssim  \|f\|_{L^p(\R)} \|g\|_{L^q(\R)} \left(\sum_{k} (\sharp D_k) 2^{ks}\right)^{1/2} \left(\sum_{\genfrac{}{}{0pt}{}{k\in\Z}{2^k \lesssim 1}}  2^{2k-ks} \right)^{1/2} \\
& \lesssim   \|f\|_{L^p(\R)} \|g\|_{L^q(\R)} ,
\end{align*}
where we have used that $s<2$.

\gb Thus, we have proved the desired result for those $x$-independent symbols which depend only on the above variable $\lambda:=\langle(\xi,\eta),\theta\rangle$. We leave to the reader to verify the general case when the symbol $\sigma$ depends on the two variable $\xi,\eta$. We produce the same reasoning: by decomposing $(\xi,\eta)$ into the basis $(\theta,\theta^{\perp})$ and then work on the  main variable $\langle(\xi,\eta),\theta\rangle$. All the estimates remain true since the other variable  $\langle(\xi,\eta),\theta^\perp\rangle$ does not play a role and the assumptions are uniform on it.

\gb
{\bf The case of an $x$-dependent symbol:} We refer the reader to \cite{pseudo1} for details concerning the ``general'' principle which allow us to obtain boundedness for $x$-dependent symbols by knowing the boundedness for corresponding $x$-independent symbols. This consists of two steps, we briefly recall it.\\
For $x$-independent symbol, the previous decomposition only uses square functions associated with intervals of lengths equivalent to one. Using the ideas of \cite{pseudo1}, we can obtain ``off-diagonal decay'' at the scale $1$ for the bilinear operators associated with $x$-independent symbols. More precisely, we can prove that for every interval $I$ of length $1$ and $m$ a bilinear multiplier belonging to $W^{1,s}_\theta(BS^0_{0,0})$, there exists $\delta>0$ (as large as we want) such that
\begin{align}
\left( \frac{1}{|I|} \int_{I} \left|T_m(f,g)(x)\right|^r dx \right)^{1/r}  & \nonumber \\
  &\hspace{-2cm} \lesssim \left[\sum_{k\geq 2} 2^{-k\delta} \left( \frac{1}{|2^{k+1}I|} \int_{2^kI\setminus 2^{k-1}I} |f(x)|^p dx \right)^{1/p} + \left(\int_{2I} |f(x)|^p dx \right)^{1/p} \right] \nonumber \\
 &  \hspace{-2cm} \ \left[\sum_{k\geq 2} 2^{-k\delta} \left( \frac{1}{|2^{k+1}I|} \int_{2^kI\setminus 2^{k-1}I} |g(x)|^q dx \right)^{1/q} + \left(\int_{2I} |g(x)|^q dx \right)^{1/q} \right]. \label{eq:off}
 \end{align}
This improvement is obtained in exactly the same way as described in the previous sections with the following slightly sharper estimate. We first decompose the tiles $s$ according to the distance between the space intervals $d(I_s,I)$. Then, it is important to get fast decay according to this quantity. To achieve this, we require new bounds for the "size" quantities, so we need to prove (instead of Theorem \ref{thm:size1}) that for $j=1,2$ and $\OP$ a collection of tri-tiles, we have
$$ \overrightarrow{\textrm{size}}^l_j(f,\OP) \lesssim \sup_{s\in\OQ} \left(\frac{1}{|I_s|} \int_\R \left(1+\frac{d(x,I_s)}{|I_s|}\right)^{-N} |f(x)|^2 dx\right)^{1/2},$$
for a large enough integer $N$. We again leave it to the reader to check that the proof described for Theorem \ref{thm:size1} allows us to get this stronger estimate. Then this improvement gives us the off-diagonal decay for the bilinear operator $T_m$ (as detailed in \cite{pseudo1}).\\
Then, by Sobolev inequality, we can extend these local ``off-diagonal'' estimates for $x$-dependent symbols $\sigma$ belonging to
$W^{1,s}_\theta(BS^0_{0,0})$ (see \cite{pseudo1} for details). As a consequence by summing these off-diagonal estimates (for $I=[n,n+1]$, $n\in\Z$), we conclude the boundedness of the associated bilinear operators on the whole space.
\end{proof}

\mb The main result of this section now can be obtained as a corollary to the previous theorem. We recall that the range of exponents in the local-$L^2$ case is invariant under duality.

\begin{theorem} Let $m:\R^3 \rightarrow \R$ be a $BS_{0,0}$ symbol. Suppose that there exist a direction $\theta$ and $s\in(1,2]$ such that $m\in W^{1,s}_\theta(BS^0_{0,0})$. Then the operator $T_{m}$ is bounded in the local $L^2$-case: from $L^p(\R) \times L^q(\R)$ into $L^r(\R)$ for every exponents $p,q,r' \in(2,\infty)$ such that
$$ \frac{1}{r}= \frac{1}{p}+\frac{1}{q}.$$
Moreover, the estimates are uniform with respect to $\theta\in {\mathbb S}^1$.
\end{theorem}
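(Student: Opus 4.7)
The plan is to reduce the theorem to the previous one by a duality argument applied at the two exceptional directions. The previous theorem already settles every $\theta\in{\mathbb S}^1$ with $\sqrt{2}\theta\notin\{(1,-1),(-1,1)\}$, with an estimate depending only on $\|\sigma\|_{W^{1,s}_\theta(BS^0_{0,0})}$. So only the two exceptional directions and the uniform dependence in $\theta$ remain to be addressed.

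For the exceptional directions I would pass to one of the formal transposes $T^{*1}_\sigma$, $T^{*2}_\sigma$. In the $x$-independent case the transpose symbols are obtained by linear substitution, $\sigma^{*1}(\xi,\eta)=\sigma(-\xi-\eta,\eta)$ and $\sigma^{*2}(\xi,\eta)=\sigma(\xi,-\xi-\eta)$, which leaves the class $BS^0_{0,0}$ invariant and transports the singular direction $\theta=(a,b)$ of $\sigma$ to $\theta^{*1}\propto(-a,b-a)$ and $\theta^{*2}\propto(a-b,-b)$. For the exceptional $\theta=\pm(1,-1)/\sqrt{2}$ these become proportional to $(1,2)$ and $(2,1)$, both of which lie outside the exceptional set. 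Hence the previous theorem applies to $T^{*i}_\sigma$; invoking the observation from the excerpt that the local-$L^2$ range $p,q,r'\in(2,\infty)$ is invariant under duality then transfers the estimate back to $T_\sigma$. For $x$-dependent symbols, the reduction to the $x$-independent case via off-diagonal decay at scale one and a Sobolev argument goes through unchanged, as in the preceding proof.

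The uniformity in $\theta$ follows by observing that the three directions $\theta,\theta^{*1},\theta^{*2}$ depend on $\theta$ through three distinct invertible linear transformations of the frequency plane, so for every $\theta\in{\mathbb S}^1$ at least one of them lies at a definite distance from the exceptional set. Partitioning ${\mathbb S}^1$ into three closed arcs according to which of $T_\sigma$, $T^{*1}_\sigma$, $T^{*2}_\sigma$ is bounded directly by the previous theorem, and using that the constant in the previous theorem and in Remark \ref{rem:impp} is uniform in the direction away from the exceptional set, one obtains a single bound valid over all of ${\mathbb S}^1$.

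The main technical point I expect to be delicate is verifying that the transposition preserves the class $W^{1,s}_{\theta}(BS^0_{0,0})$ with the stated change of direction also in the $x$-dependent case: the transpose symbol then involves an integration in $x$ against the Fourier phase, and one must confirm that the arbitrary $x$-regularity provided by $BS^0_{0,0}$ is enough to control this operation and to transport the anisotropic $L^s$ regularity in the frequency plane into the new direction uniformly in $\theta$. The $x$-independent analog reduces to a direct change of variables on $\R^2$, and both verifications should be routine provided the bookkeeping of the new direction and the Jacobian is done carefully.
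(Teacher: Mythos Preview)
Your approach is essentially the same as the paper's: both use duality to transport the exceptional direction $\theta\propto(1,-1)$ to a non-exceptional one and invoke the previous theorem there, exploiting the self-duality of the local-$L^2$ range. Your computation of the transported directions $\theta^{*1}\propto(-a,b-a)$ and $\theta^{*2}\propto(a-b,-b)$ is correct and matches the paper's formulae $\cot(\theta)+\cot(\theta^{*1})=-1$, $\tan(\theta)+\tan(\theta^{*2})=-1$.

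The one point where you diverge from the paper is the ``delicate technical point'' in your last paragraph. You worry about computing the transpose of an $x$-dependent bilinear pseudodifferential operator and checking that it stays in $W^{1,s}_{\theta^{*i}}(BS^0_{0,0})$. The paper avoids this entirely by reversing the order of operations: it applies duality only at the level of $x$-independent multipliers, where the transpose symbol is given by the explicit linear substitution you wrote and the class is preserved by a direct change of variables. This yields the off-diagonal decay \eqref{eq:off} for $x$-independent multipliers in the dual classes $W^{1,s}_{\theta^{*i}}(BS^0_{0,0})$; one then upgrades to $x$-dependent symbols in those same classes via the Sobolev/localization argument of the preceding proof, with no need to transpose an $x$-dependent operator. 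In short, your concern is legitimate for the route you sketched, but the paper's ordering (dualize first at the multiplier level, then pass to $x$-dependent symbols) makes it disappear.
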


\begin{proof} The previous theorem gives us the result for all the directions $\theta$, except one degenerate direction. The idea is then to use duality in order to get around this technical problem. \\
Let us first remark that for an $x$-independent symbol $m$ belonging to $W^{1,s}_\theta(BS^0_{0,0})$, we have the following descriptions of the two adjoint operators~: $T_m^{*1}:=T_{m_1^*}$ and $T_m^{*2}:=T_{m_2^*}$
 with
 \begin{eqnarray*}
 m_1^*(\xi,\eta)  = \overline{m(-\xi-\eta,\eta)} \\
 m_2^*(\xi,\eta)  = \overline{m(\xi,-\eta-\xi)}.
 \end{eqnarray*}
 Hence, $m\in W^{1,s}_\theta(BS^0_{0,0})$ is equivalent to $m_1^*\in W^{1,s}_{\theta_1}(BS^0_{0,0})$ which further is equivalent to $m_2^* \in W^{1,s}_{\theta_2}(BS^0_{0,0})$ with
$$ \cot(\theta) + \cot(\theta^{*1}) = -1 \qquad \textrm{and} \qquad \tan(\theta)+\tan(\theta^{*2})=-1,$$
where $\cot$ and $\tan$ are naturally defined in ${\mathbb S}^1$. \\
So the off-diagonal decay obtained for multipliers associated with the class $W^{1,s}_{\theta}(BS^0_{0,0})$ can be transferred to the dual classes $W^{1,s}_{\theta_1}(BS^0_{0,0})$ and $W^{1,s}_{\theta_2}(BS^0_{0,0})$. Then, as described in the previous proof, we can get boundedness on the whole space for the $x$-dependent symbols belonging to the same classes. \\
Since the previous theorem gives the result for all the direction except one, this reasoning relying on duality allows us to get the result for this specific direction. Moreover, it is easy to check that the estimates are uniform with respect to the directions $\theta\in {\mathbb S}^1$.
\end{proof}

\section{Proof of Theorem \ref{ms} for remaining index $1<p_3\leq 2$}\label{remainingindex}

We provide here the reasoning for the remark we made in the introduction concerning the validity of Theorem \ref{ms} for remaining index $1<p_3\leq2$ using our main result Theorem \ref{thm:principa}.

\begin{proof} Let $\phi \in \mathcal{S}(\R)$ be as in Theorem \ref{ms}. Consider a smooth function $\chi$ supported on $[-1,1]$ such that for all $\xi\in\R$
$$ 1 = \sum_{p\in\Z} \chi(\xi-p).$$
We decompose the function $\phi$ in a similar fashion as in previous section :
$$ \phi(\xi) = \sum_{p\in\Z} \chi(\xi - p) \phi(\xi) := \sum_{p\in\Z} \phi_{p}(\xi).$$
Using Minkowski's inequality we have
\begin{eqnarray*}
\left\| \left(\sum\limits_{n\in\Z}|T_{\phi_n}(f,g)|^2\right)^{\frac{1}{2}} \right\|_{L^{p_3}(\R)}&=& \left\| \left( \sum\limits_{n\in\Z} \left|\sum\limits_{p\in \Z}T_{\phi_{p,n}}(f,g) \right|^2 \right)^{\frac{1}{2}} \right\|_{L^{p_3}(\R)}\\
&\leq&\sum_{p\in \Z} \left\| \left(\sum\limits_{n\in\Z} \left|T_{\phi_{p,n}}(f,g) \right| ^2 \right) ^{\frac{1}{2}} \right\|_{L^{p_3}(\R)},
\end{eqnarray*}
where $\phi_{p,n}(\xi)=\phi_{p}(\xi-n)$.

\mb Note that for each $p\in \Z$, the symbol $\phi_{p}$ is well-adapted to the interval $[p-1, p+1]$, hence we can apply Theorem~(\ref{thm:principa}) to the square function associated with $\phi_{p,n}$, to conclude
\begin{eqnarray*}
\left\| \left(\sum\limits_{n\in\Z} \left|T_{\phi_{p,n}}(f,g) \right| ^2 \right) ^{\frac{1}{2}} \right\|_{L^{p_3}(\R)}&\leq& C\|\phi_p\|_{L^\infty(\R)} \|f\|_{L^{p_1}(\R)} \|g\|_{L^{p_2}(\R)},
\end{eqnarray*}
where exponents $p_1, p_2,$ and $p_3$ satisfy the Local-$L^2$ condition.
For each $p\in \Z$ using the decay of $\phi$ we know that for $N\in \N$, there exists a constant $C_N$ such that
$$\|\phi_p\|_{L^\infty(\R)} \leq \frac{C_N}{(1+|p|)^N}.$$
Hence summing over $p$, we get the desired result.
\end{proof}


\begin{thebibliography}{10}

\bibitem{BT2}
\'A.\ B\'enyi and R.H.\  Torres,
\newblock Almost orthogonality and a class of bounded bilinear pseudodifferential operators.
\newblock {\em Math. Res. Lett.} \textbf{11} (2004), no. 1, 1--11.

\bibitem{BGHO}
\'A.\ B\'enyi, K. Gr\"ochenig, C. Heil and K. Okoudjou,
\newblock Modulation spaces and a class of bounded multilinear pseudodifferential operators.
\newblock {\em J. Operator Theory} \textbf{54} (2005), no. 2, 387--399.

\bibitem{bipseudo}
\'A.\ B\'enyi, A.R.\  Nahmod, and R.H.\  Torres,
\newblock Sobolev space estimates and symbolic calculus for bilinear  pseudodifferential operators.
\newblock {\it  J.\ Geom.\  Anal.} {\bf 16} (2006), 431--453.

\bibitem{t1bilineaire}
\'A. B\'enyi, C. Demeter, A.R. Nahmod, C. Thiele, R.H. Torres and P. Villarroya,
\newblock Modulation invariant bilinear T(1)-Theorem.
\newblock {\em J.  Anal.  Math.} \textbf{109} (2009), 279-352.

\bibitem{BMNT}
\'A.\ B\'enyi, D. Maldonado, V. Naibo and R.H.\  Torres,
\newblock On the H\"ormander classes of bilinear pseudodifferential operators.
\newblock {\em Int. Equa. and Op. Theory} \textbf{67} (2010), no. 3, 341--364.

\bibitem{pseudo1}
F.\ Bernicot,
\newblock Local estimates and global continuities in Lebesgue spaces for bilinear operators.
\newblock {\em Anal and PDE} \textbf{1} (2008), 1-27.

\bibitem{pseudo}
F.\ Bernicot,
\newblock Bilinear Pseudodifferential Calculus.
\newblock {\em  J.\ Geom.\ Anal.} {\bf 20} (2010), no.1, 39--62.

 \bibitem{B}
 F.~Bernicot,
 \newblock $L^p$ estimates for non smooth bilinear Littlewood-Paley square functions on $\R$.
 \newblock {\em Math. Ann.} to appear and available at \url{http://arxiv.org/abs/0811.2854}

\bibitem{BT}
F.\ Bernicot and R. Torres,
\newblock Sobolev space estimates for a class of bilinear
pseudodifferential operators lacking symbolic calculus.
\newblock {\em Anal. and PDE} (2011).

 \bibitem{BG1}
 D.~Bilyk and L.~Grafakos,
 \newblock Distributional estimates for the bilinear {H}ilbert.
 \newblock {\em J. Geom. Anal.} \textbf{16} (2006), no. 4, 563--584.

 \bibitem{BG2}
 D.~Bilyk and L.~Grafakos,
 \newblock A new way of looking at distributional estimates; applications for
   the bilinear {H}ilbert transform.
 \newblock {\em Proc. 7th Int. Conf. on Harmonic Analysis and Partial
   Differential Equations [El Escorial, 2004], Collectanea Mathematica} (2006),
   141--169.

\bibitem{CV}
A.P. Calder\'on and R. Vaillancourt,
\newblock On the boundedness of pseudo-differential operators.
\newblock {\em J. Math. Soc. Japan} \textbf{23} (1971), 374--378.

 \bibitem{Carleson}
 L.~Carleson,
 \newblock On the {L}ittlewood-{P}aley {T}heorem.
\newblock {\em Inst. Mittag-Leffler, Report}, 1967.

 \bibitem{Diestel}
 G.~Diestel,
 \newblock Some remarks on bilinear {L}ittlewood-{P}aley {T}heory.
 \newblock {\em J. Math. Anal. and Appl.} \textbf{307} (2005), 102--119.

 \bibitem{GN}
 J.~Gilbert and A.~Nahmod,
 \newblock Bilinear operators with non smooth symbols : part $1$.
 \newblock {\em J. of Four. Anal. and Appl.} \textbf{6} (2000), no.5, 437--469.

 \bibitem{GN2}
 J.~Gilbert and A.~Nahmod,
 \newblock ${L^p}$-boundedness for time-frequency {P}araproducts : part $2$.
 \newblock {\em J. of Four. Anal. and Appl.} \textbf{8} (2002), no.2, 109--172.

 \bibitem{Gra}
 L.~Grafakos,
 \newblock {\em Classical and {M}odern {F}ourier {A}nalysis}.
 \newblock Pearson Education, 2004.

 \bibitem{GK}
 L.~Grafakos and N.~Kalton,
 \newblock Some remarks on multilinear maps and interpolation.
 \newblock {\em Math. Ann.} \textbf{319} (2001), no. 1, 151--180.

 \bibitem{GTa}
 L.~Grafakos and T.~Tao,
 \newblock Multilinear interpolation between adjoint operators.
 \newblock {\em J. Funct. Anal.} \textbf{199} (2003), no. 2, 379--385.

\bibitem{gt1}
L.\ Grafakos and R.\ H.\ Torres,
\newblock Multilinear Calder\'{o}n-Zygmund theory.
 \newblock {\it Adv.\ in \ Math.} {\bf 165} (2002), 124-164.

\bibitem{KS}
C.\  Kenig and E.\ M.\ Stein,
\newblock Multilinear estimates and fractional integration.
\newblock {\it Math.\  Res.\  Lett.}  {\bf  6} (1999), 1--15.

 \bibitem{lacey2}
 M.~Lacey,
 \newblock On bilinear {L}ittlewood-{P}aley square functions.
 \newblock {\em Publ. Mat.} \textbf{40} (1996), no. 2, 387--396.

 \bibitem{lacey}
 M.~Lacey,
 \newblock The bilinear maximal functions map into ${L}^p$ for $2/3<p\leq 1$.
 \newblock {\em Ann. of Math.} \textbf{151} (2000), 35--57.

 \bibitem{LT2}
 M.~Lacey and C.~Thiele,
 \newblock ${L}^p$ estimates on the bilinear {H}ilbert transform.
 \newblock {\em Proc. Nat. Acad. Sci. USA} \textbf{94} (1997), 33--35.

 \bibitem{LT1}
 M.~Lacey and C.~Thiele,
 \newblock ${L}^p$ estimates on the bilinear {H}ilbert transform for
   $2<p<\infty$.
 \newblock {\em Ann. of Math.} \textbf{146} (1997), 693--724.

 \bibitem{LT3}
 M.~Lacey and C.~Thiele,
 \newblock On the {C}alder\'on conjectures for the bilinear {H}ilbert transform.
 \newblock {\em Proc. Nat. Acad. Sci. USA} \textbf{95} (1998), 4828--4830.

 \bibitem{LT4}
 M.~Lacey and C.~Thiele,
 \newblock On {C}alder\'on's conjecture.
 \newblock {\em Ann. of Math.} \textbf{149} (1999), 475--496.

 \bibitem{LiP}
 J.E. Littlewood and R.E.A.C. Paley,
 \newblock Theorems on {F}ourier series and power series ({I}).
 \newblock {\em J. London Math. Soc.} \textbf{6} (1931), 230--233.

 \bibitem{LiP1}
 J.E. Littlewood and R.E.A.C. Paley,
 \newblock Theorems on {F}ourier series and power series ({II}).
 \newblock {\em Proc. London Math. Soc.} \textbf{42} (1936), 52--89.

 \bibitem{LiP2}
 J.E. Littlewood and R.E.A.C. Paley,
 \newblock Theorems on {F}ourier series and power series ({III}).
 \newblock {\em Proc. London Math. Soc.} \textbf{43} (1937), 105--126.

\bibitem{MS}
P.~Mohanty and S.~Shrivastava,
\newblock A note on the bilinear Littlewood- Paley square function.
\newblock {\em Proc. Amer. Math. Soc.} \textbf{138} (2010), no. 6, 2095--2098.

 \bibitem{MTT2}
 C.~Muscalu, T.~Tao, and C.~Thiele,
 \newblock Multi-linear operators given by singular multipliers.
 \newblock {\em Journ. Amer. Math. Soc} \textbf{15} (2002), 469--496.

 \bibitem{mtt}
 C.~Muscalu, T.~Tao, and C.~Thiele,
 \newblock Uniforms estimates on multi-linear operators with modulation
   symmetry.
 \newblock {\em Journ. Anal. Math} \textbf{88} (2002), 255--309.

 \bibitem{MTT3b}
 C.~Muscalu, T.~Tao, and C.~Thiele,
 \newblock ${L}^p$ estimates for the biest {I}~: The {W}alsh case.
 \newblock {\em Math. Ann.} \textbf{329} (2004), 401--426.

 \bibitem{MTT3}
 C.~Muscalu, T.~Tao, and C.~Thiele,
 \newblock ${L}^p$ estimates for the biest {II}~: The {F}ourier case.
 \newblock {\em Math. Ann.} \textbf{329} (2004), 427--461.

 \bibitem{RF}
 J.L. {Rubio de Francia},
 \newblock A {L}ittlewood-{P}aley inequality for arbitrary intervals.
 \newblock {\em Rev. Mat. Iber.} \textbf{1} (1985), no. 2, 1--14.

 \end{thebibliography}
\end{document}